\newtheorem{theorem}{Theorem}[section]
\newtheorem{proposition}[theorem]{Proposition}
\newtheorem{lemma}[theorem]{Lemma}
\theoremstyle{definition}
\newtheorem{definition}[theorem]{Definition}
\theoremstyle{remark}
\newtheorem{remark}[theorem]{Remark}
\newtheorem{example}[theorem]{Example}
\newcommand\A{\mathcal{A}}
\newcommand{\ca}{\mathcal}
\newcommand{\U}{\on{U}}
\newcommand{\R}{\mathbb{R}}
\newcommand{\C}{\mathbb{C}}
\newcommand{\SU}{\on{SU}}
\newcommand{\Z}{\mathbb{Z}}
\renewcommand{\P}{\mathsf{P}}
\newcommand\lie[1]{\mathfrak{#1}}
\newcommand{\h}{\lie{h}}
\renewcommand{\t}{\lie{t}}
\newcommand{\on}{\operatorname}
\newcommand{\Hom}{ \on{Hom}}
\renewcommand{\subset}{\subseteq}
\renewcommand{\supset}{\supseteq}
\renewcommand{\ker}{ \on{ker}}
\newcommand\qu{/\kern-.7ex/} 
\renewcommand{\d}{{\mbox{d}}}
\newcommand{\ol}{\overline}
\newcommand\eps{\epsilon}
\newcommand\Om{\Omega}
\newcommand{\f}{\frac}
\renewcommand{\l}{\langle}
\renewcommand{\r}{\rangle}
\newcommand\hh{{\f{1}{2}}}
\newcommand{\eeq}{\end{eqnarray*}}
\newcommand{\beq}{\begin{eqnarray*}}
\newcommand{\End}{\on{End}}
\newcommand{\mf}{\mathfrak}
\newcommand{\ann}{\on{ann}}
\newcommand\lieg{{\mathfrak{g}}}
\newcommand\lieh{{\mathfrak{h}}}
\newcommand\liet{{\mathfrak{t}}}
\newcommand{\lieT}{{\mathsf{T}}}
\newcommand{\lieH}{{\mathsf{H}}}
\newcommand\Eul{{\textnormal{Eul}}}
\newcommand\Pol{{\textnormal{Pol}}}
\newcommand\vol{{\textnormal{vol}}}
\newcommand\Ber{{\textnormal{Ber}}}
\newcommand\Map{{\textnormal{Map}}}
\renewcommand\sp{{\textnormal{span}}}
\newcommand{\ul}{\underline}
\newcommand\vx{{\textnormal{vx}}}
\newcommand\Ver{{\textnormal{Ver}}}
\newcommand\Sym{{\textnormal{Sym}}}
\newcommand\frakc{{\mathfrak{c}}}
\newcommand\calR{{\mathcal{R}}}
\newcommand\calA{{\mathcal{A}}}
\newcommand\calD{{\mathcal{D}}}
\newcommand\calS{{\mathcal{S}}}
\newcommand\ualpha{{\underline{\alpha}}}
\newcommand\hvee{{\textnormal{h}^\vee}}
\newcommand\cox{\hvee}
\newcommand\bC{{\mathbb{C}}}
\newcommand\bR{{\mathbb{R}}}
\newcommand\bQ{{\mathbb{Q}}}
\newcommand\bZ{{\mathbb{Z}}}
\newcommand\bN{{\mathbb{N}}}
\newcommand\bfalpha{{\bm{\alpha}}}
\newcommand\ubfalpha{{\ul{\bfalpha}}}
\newcommand\bfu{{\bm{u}}}
\newcommand\bfbeta{{\bm{\beta}}}
\newcommand\bfH{{\bm{H}}}
\newcommand\sm{{\setminus}}
\renewcommand{\i}{{\mathrm{i}}}
\newcommand\calDC{{\mathcal{D}_{\mathbb{C}}}}
\newcommand\Td{{\textnormal{Td}}}
\newcommand{\pair}[2]{\langle #1, #2 \rangle}
\newcommand{\ignore}[1]{}
\DeclareMathOperator*{\CT}{CT}  
\DeclareMathOperator*{\iCT}{iCT}
\begin{document}
	\sloppy
	\title{The decomposition formula for Verlinde Sums}
	\author{Yiannis Loizides}
	
	\author{Eckhard Meinrenken}
	
\begin{abstract}
We prove a decomposition formula for Verlinde sums (rational trigonometric sums), as a discrete counterpart to the Boysal-Vergne decomposition formula for Bernoulli series. Motivated by applications to fixed point formulas in Hamiltonian geometry, we develop differential form valued version of Bernoulli series and Verlinde sums, and extend the decomposition formula to this wider context. 
\end{abstract}
\maketitle

\section{Introduction}
The prototypical example of a Verlinde sum is the function, for a given natural number $n$,
\begin{equation}\label{eq:versum}
 V_n(\lambda,\ell)=\sum_{q^\ell=1,\ q\neq 1} \frac{q^\lambda}{(1-q^{-1})^n},\ \ \ 
 (\lambda,\ell)\in \bZ\times \bN. \end{equation}
It may be seen as a discrete analogue of the Bernoulli series
\begin{equation}\label{eq:berseries}
 B_n(\lambda)=\sum_{j\in \bZ_{\neq 0}} \frac{e^{2\pi \i j\lambda}}{(2\pi \i j)^n},\ \ \ \ 
  \lambda\in \bR.
 \end{equation}
The Bernoulli series is invariant under integer translation, and its restriction to the open unit interval $(0,1)$ is given by a polynomial $\on{Ber}_n$ of degree $n$ in $\lambda$. 
For $n\ge 1$, these are $-\f{1}{n!}$ times the  \emph{Bernoulli polynomials}. Similarly, the
Verlinde sum for fixed $\ell$ is $\ell\bZ$-periodic in $\lambda$, and its restriction 
to the set of all $(\lambda,\ell)$ such that $\lambda/\ell\in (0,1)$  is a polynomial $\on{Ver}_n$ of degree $n$ in the two variables $\lambda,\ell$. Its part of homogeneity $n$ in $(\lambda,\ell)$ is given by  $\ell^n \on{Ber}_n(\lambda/\ell)$.
These polynomials may be described through generating functions:
\begin{equation}\label{eq:vergenfunction}
 \sum_{n=0}^\infty (1-e^{-z})^n \on{Ver}_n(\lambda,\ell)
=1-\ell e^{\lambda z} \f{e^z-1}{e^{\ell z}-1}
\end{equation}
and 
\begin{equation}\label{eq:bergenfunction}
 \sum_{n=0}^\infty z^n \on{Ber}_n(\lambda)
=1-e^{\lambda z} \f{z}{e^{z}-1}.
\end{equation}

 The Bernoulli series make an appearance in the volume formula for moduli spaces of flat connections of flat $\on{SU}(2)$-bundles over surfaces, due to Thaddeus \cite{th:co} and  Witten \cite{wi:qg}.  Similarly, the Verlinde sums appear in the Verlinde formulas \cite{ve:fr} for the quantization of these moduli spaces.  

The versions for  higher rank Lie groups lead to higher-dimensional \emph{multiple} Bernoulli series and Verlinde sums.  The multiple Bernoulli series exhibit a piecewise polynomial behavior, while the multiple Verlinde sums are piecewise \emph{quasi}-polynomial. Aside from the context of 2-dimensional gauge theory, they 
also appear in the localization formulas for volumes and quantizations of Hamiltonian loop group spaces or, equivalently, quasi-Hamiltonian $G$-spaces \cite{al:fi,al:du,me:twi}.

The combinatorics of the (multiple) Verlinde sums, also known as \emph{rational trigonometric sums}, was developed by Szenes \cite{sz:res}. The main result in \cite{sz:res} is a residue formula for the Verlinde sums,  similar to his earlier result \cite{sz:it} for Bernoulli series. The Szenes formula allows for efficient  computations, and in particular leads to a proof of the (quasi-)polynomial behavior -- for example, 
\eqref{eq:vergenfunction} and \eqref{eq:bergenfunction} may be obtained from his theorem. 

The main purpose of this article is to prove a \emph{decomposition formula} for multiple Verlinde sums, similar to the Boysal-Vergne decomposition formula \cite{boy:mul} for Bernoulli series. For the Verlinde sums $V_n$, the formula is 
\[ V_n(\lambda,\ell)=\Ver_n(\lambda,\ell)+\ell\sum_{\mu=1}^\infty P_n(\lambda-\ell\mu)+(-1)^n \ell \sum_{\mu=-\infty}^0 P_n(\ell\mu-\lambda-n),
\]
where $P_n(\mu)$ is the partition function, i.e., the number of ways of writing $\mu$ as a sum of $n$ non-negative integers. It expresses $V_n$ as a central polynomial contribution,  plus correction terms supported on affine half lines; the support properties are such that for any given $(\lambda,\ell)$ the sum is finite.

We will need the decomposition formula for Verlinde sums for our combinatorial proof of the `quantization commutes with reduction' theorem for Hamiltonian loop group spaces, in the spirit of Szenes-Vergne's argument \cite{sz:qr0} for ordinary Hamiltonian spaces. In turn, this approach is motivated by Paradan's norm-square formulas for Hamiltonian spaces \cite{par:loc,par:wa}.  With these applications in mind, we are led to consider more general Verlinde sums with `equivariant parameters';  under the Chern-Weil homomorphism this produces the required formulas for expressions arising in the fixed point formula.

\bigskip\noindent 
\noindent{\bf Acknowledgments:} We  thank Michele Vergne for discussions and helpful comments. Most of the results described in this paper are natural extensions of work by Arzu Boysal and Michele Vergne on Bernoulli series, and a number of steps in this direction are already explained in Vergne's lecture notes \cite{ve:res}; as well as the slides \cite{ver:sl} from her lecture at the 2010 AMS meeting in San Francisco. 
\bigskip

\section{Verlinde sums}

\subsection{Lattices.}
Let $\Lambda$ be a lattice, with dual lattice $\Lambda^\ast=\Hom(\Lambda,\bZ)$. 
We denote by $\liet=\Lambda\otimes_{\bZ}\bR$ the real vector space spanned by 
$\Lambda$ and by  $\liet_{\bQ}=\Lambda\otimes_{\bZ}\bQ$ its rational points. 
A subspace $\lieh \subset \liet$ is \emph{rational}  if it is spanned by $\lieh\cap \liet_{\bQ}$; in this case, $\ann(\lieh)\subset \liet^\ast$ is rational and 
$(\lieh\cap \Lambda)^\ast\cong \Lambda^\ast/\Lambda^\ast\cap \ann(\lieh)$. 

Let $\lieT=\liet/\Lambda$ the torus; the quotient map is the exponential map $\exp\colon \liet\to \lieT$ for this torus. Thus $\Lambda=\ker(\exp)$ becomes the integral lattice of $\lieT$, and $\Lambda^\ast$ is identified with the weight lattice $\mathrm{Hom}(T,\mathrm{U}(1))$.  For $\lambda\in\Lambda^\ast$, we denote by $\lieT\to \mathrm{U}(1),\ \ t\mapsto t^\lambda$ the corresponding homomorphism; thus 
\begin{equation}\label{eq:weights} t^\lambda=e^{2\pi \i \pair{\lambda}{X}}\end{equation}
for $t=\exp(X)$.  Suppose $\Xi\subset \liet$ is a lattice containing $\Lambda$.  For $\ell\in \bN$ we consider the finite subgroup
\[ \lieT_\ell=\ell^{-1}\Xi/\Lambda \subset \lieT.\]
For example, if $B$ is an inner product on $\liet$ which is \emph{integral} in the sense that it restricts to a $\bZ$-valued bilinear form on $\Lambda$, then the inverse image 
of $\Lambda^*$ under the  isomorphism $B^\flat\colon \liet\to \liet^\ast$ can play the role of such a lattice $\Xi$.

\begin{example}\label{ex:lie1}The following setting plays a role for the Verlinde formula: 
	Let $G$ be a compact, simple, simply connected Lie group, $\lieT\subset G$ a maximal torus, 
	and $\Lambda\subset \liet$ the integral lattice, so that $\lieT=\liet/\Lambda$. The \emph{basic inner product} $B$ on $\lieg$ is the unique invariant inner product 
	such that the shortest vectors in $\Lambda-\{0\}$ have length $\sqrt{2}$. This is an integral inner product, and we take $\Xi=\Lambda^*$ under the resulting 
	identification $\liet\cong \liet^\ast$. Given a level $k\in \bN$, the \emph{level $k$ fusion ring} (or \emph{Verlinde algebra}) $R_k(G)$ can be abstractly defined as 
	quotient ring  $R(G)/\mathcal{I}_k(G)$, where $R(G)$ is regarded as the ring of characters of $G$-representations, and $\mathcal{I}_k(G)$ is the ideal of characters vanishing on the regular elements of 
	\[ \lieT_{k+\cox}\subset \lieT .\] 
	Here $\cox$ is the dual Coxeter number of $G$, and an element of $G$ is called regular if its centralizer is a maximal torus.
\end{example}

Recall that a \emph{list} $A$ of elements of a set $S$ is a collection of elements of $S$ `with multiplicities'. Equivalently, it amounts to a map $S\to \bN\cup\{0\}$. For finite lists, it is often convenient to  use set-theoretic notation $A=\{s_1,\ldots,s_n\}$,
where elements appear several times, according to their multiplicity. The notation $s\in A$ means that $s$ appears with multiplicity at least $1$, and $A-\{s\}$ is the new list in which the multiplicity of $s$ has been reduced by $1$.
The Verlinde sums in the following section will involve a choice of list $\bfalpha=\{\alpha_1,\ldots,\alpha_n\}$ of weights $\alpha_i\in\Lambda^*$. Note that
finite lists of weights in $\Lambda^*$ are in 1-1 correspondence with isomorphism classes of finite-dimensional unitary $\lieT$-representations.

\begin{definition}\label{def:arrangements}
For a list $\bfalpha=\{\alpha_1,\ldots,\alpha_n\}$ of weights,  we define:
  \begin{itemize}
\item 
The  \emph{zonotope} 
\[ \Box \bfalpha = \Big\{ \sum_{k=1}^n t_k \alpha_k\in\liet^\ast| \ 0 \le t_k \le 1 \Big\}.\]
\item 
The collection $\calS=\calS(\bfalpha)$ of affine subspaces of $\liet^\ast$, consisting of  subspaces spanned by sublists  of $\bfalpha$, together with $\Xi^\ast$-translates of such subspaces. The elements $\Delta\in \calS$ are referred to as \emph{admissible subspaces} \cite{boy:mul}. 
\item For $\Delta\in\calS(\bfalpha)$, we denote by $\liet_\Delta\subset \liet$ the rational subspace of vectors orthogonal to $\Delta$,  and by $\lieT_\Delta=\exp(\liet_\Delta)$ the corresponding torus. If $\Delta$ is a 
$\Xi^\ast$-translate of the span of some $\alpha\in \bfalpha$, these coincide with $\liet_\alpha=\ker(\alpha),\ \lieT_\alpha=\exp(\liet_\alpha)$.
\item Given $\Delta\in \calS(\bfalpha)$, an element $\mu\in \Delta$ is called \emph{regular in $\Delta$} if it is not contained in any $\Delta'\in \calS(\bfalpha)$
with $\dim\Delta'<\dim\Delta$. The connected components of regular elements in $\Delta$ are called the (open) \emph{chambers in $\Delta$}. 
If $\Delta_0:=\sp_{\bR}\,\bfalpha$ is all of $\liet^\ast$, the chambers of  
$\Delta_0\in\calS(\bfalpha)$ will simply be referred to as the \emph{chambers}. 
\end{itemize}
\end{definition}

\begin{example}\label{ex:lie2}
In Example \ref{ex:lie1}, consider the list $\bfalpha=\mathfrak{R}_-$ 
of negative roots of $G$. 
The collection $\calS$ of admissible subspaces consists of subspaces spanned by subsets of roots, together with their $\Xi^\ast\cong \Lambda$-translates.  Letting $\rho$ be the 
half-sum of positive roots, the zonotope is given by 
\[ \Box\mathfrak{R}_-=\mathrm{hull}(W\cdot\rho)-\rho,\]
 the $-\rho$ shift of the convex hull of the Weyl group orbit of $\rho$. 
This follows  from the formula for the character of the $\rho$-representation, 
\[ \chi_\rho(t)=t^\rho \prod_{\alpha\in\mathfrak{R}_-}(1+t^\alpha)\]
by comparing the set of weights appearing on both sides. 
\end{example}

\subsection{Definition of the Verlinde sum.} \label{SubsectionDefandQuasi}
Suppose $\Lambda \subset \Xi$ are full-rank lattices in a  finite-dimensional vector space $\liet$ of dimension $r$, and recall $\lieT_\ell=(\frac{1}{\ell}\Xi)/\Lambda$.  
Consider a list of weights $\bfalpha=\{\alpha_1,\ldots,\alpha_n\}$, and a list $\bfu=\{u_1,\ldots,u_n\}$ of complex numbers which are roots of unity (that is, some positive power of $u_k$ is $1$). We will call the pairs $\ul{\alpha}_k=(\alpha_k,u_k)$ \emph{augmented weights}, and denote the corresponding list of 
augmented pairs by $\ubfalpha=(\bfalpha,\bfu)$.

\begin{definition}
The \emph{Verlinde sum} associated to the list of augmented weights $\ubfalpha$ is the function $V_{\ubfalpha}\colon \Lambda^\ast \times \bN \rightarrow \bC$ defined by
\begin{equation}\label{eq:verlindesum}
 V_{\ubfalpha}(\lambda,\ell)=
\sideset{}{'}\sum_{t\in \lieT_\ell} \frac{t^{\lambda}}{\prod_{(\alpha,u)\in \ubfalpha}\, 1-u\,t^{-\alpha}}.
\end{equation}

Here the  prime next to the summation symbol means that the summation only extends over elements 
$t\in \lieT_\ell$ for which  the denominator does not vanish. If all $u_k$ are equal to $1$, we also use the notation $V_{\bfalpha}$. 
\end{definition}
The functions $V_{\ubfalpha}$ are called \emph{rational trigonometric sums} in \cite{sz:res}; the term \emph{Verlinde sum} was used in  \cite{ver:sl}. Since $t^{\lambda}$ for $t\in \lieT _\ell$ depends only on the equivalence class 
of $\lambda$ modulo $\ell \Xi^\ast$, the Verlinde sum has the periodicity property 
\[ V_{\ubfalpha}(\lambda+\ell\mu,\ell)=V_{\ubfalpha}(\lambda,\ell),\ \ \ \mu\in \Xi^\ast.\]
Two special cases are worth pointing out: \begin{enumerate}
\item[(i)]If the list $\ubfalpha$ includes the \emph{trivial augmented weight} $\ul\alpha=(0,1)$,
then the summation is over an empty set, hence $V_{\ubfalpha}=0$.
\item[(ii)]
If $\ubfalpha=\emptyset$, then the denominator 
in Equation \eqref{eq:verlindesum} is equal to $1$, and the summation is over all of $\lieT_\ell$. 
We hence obtain, by finite Fourier transform, 
\begin{equation}\label{eq:verlindeseriesempty}
 V_{\emptyset}(\lambda,\ell)= (\# \lieT_\ell)\ \delta_{\ell \Xi^\ast}(\lambda)
\end{equation}
where $\delta_{\ell \Xi^\ast}(\lambda)$ equals $1$ if $\lambda\in \ell\Xi^\ast$, $0$ otherwise. 
\end{enumerate}

The Verlinde sum \eqref{eq:verlindesum} is the discrete counterpart to the \emph{multiple Bernoulli series} 
\begin{equation}\label{eq;bernoulli}
 B_{\bfalpha}(\lambda)=\sideset{}{'}\sum_{\xi\in\Xi} \frac{e^{2\pi \i \langle\lambda,\xi\rangle }}{\prod_{\alpha\in\bfalpha} 2\pi \i \langle \alpha,\xi\rangle},\ \ \ \ 
 \lambda\in\liet^\ast
 \end{equation}
where the infinite sum is defined as a generalized function of $\lambda$. These series 
satisfy $B_{\bfalpha}(\lambda+\mu)=B_{\bfalpha}(\lambda)$ for 
$\mu\in \Xi^\ast$, and turn out to be piecewise polynomial. The multiple Bernoulli series have been studied in 
\cite{bal:mul,boy:mul,sz:it,ver:sl}.

\begin{example}(Cf.~ \cite{ver:sl}\label{ex:basicexamples})
Let $\liet=\bR$, with $\Lambda=\Xi=\bZ$. Let $\bfalpha=\{1,\ldots,1\}$ (where $1$ denotes the weight $1\in \bZ=\Lambda^\ast$) be the list consisting of the element $1$ repeated $n$ times. For $n>0$, the corresponding 
Bernoulli series $B_n:=B_{\bfalpha}$ and Verlinde sum $V_n:=V_{\bfalpha}$ 
are given by 
\[ B_n(\lambda)=\sum_{j\in \bZ_{\neq 0}} \frac{e^{2\pi \i  j\lambda}}{(2\pi \i j)^n},\ \ \ \ \ \ 
V_n(\lambda,\ell)=\sum_{j=1}^{l-1} \frac{e^{2\pi \i  j{\lambda}/{\ell}}}{(1-e^{-2\pi \i  j /\ell})^n}.
\]
For $B_n$, 
this is a (generalized) function of $\lambda\in\bR$; for $V_n$, we take $\lambda$ to be 
an integer. If $n=0$ (corresponding to $\bfalpha=\emptyset$) we have 
\[ B_0(\lambda)=\delta_\bZ(\lambda),\ \ \ 
 V_0(\lambda,\ell)=\ell\delta_{\ell \bZ}(\lambda).\]
In the first formula, $\delta_\bZ$ denotes the generalized function on $\liet$ whose integration against a test function $f$ gives $\sum_{j\in \bZ}f(j)$; in the second formula, $\delta_{\ell\bZ}$ denotes the characteristic function of $\ell\bZ\subset \bZ$. The series $B_n$ is periodic with period $1$ in $\lambda$, while $V_n$ is periodic with period $\ell$. 
\label{ex:basicexamples2} The Bernoulli series satisfy a differential equation 
$\frac{\partial}{\partial \lambda}B_n(\lambda)=B_{n-1}(\lambda)-\delta_{n,1}$ and a normalization $\int_0^1 B_n(\lambda) d\lambda =0$ for $n\ge 1$. These two properties can can be used to compute $B_n$ recursively, see \cite{ver:sl}. It turns out  that $B_n(\lambda)$
are given by polynomials $\mathrm{Ber}_n(\lambda)$ on the interval $0<\lambda<1$.
We have that $\on{Ber}_0=0$, while for $n\ge 1$, $\on{Ber}_n$ is $\f{-1}{n!}$ times the  \emph{Bernoulli polynomials}. For example, 
\begin{eqnarray*}
\mathrm{Ber}_1(\lambda)&=&\frac{1}{2}-\lambda,\\ \mathrm{Ber}_2(\lambda)&=&-\frac{1}{2}\lambda^2+\frac{1}{2}\lambda-\frac{1}{12}. 
\end{eqnarray*}
Similarly, the Verlinde sums satisfy the difference equation 
	\begin{equation}\label{eq:basicrec}
	V_n(\lambda,\ell)-V_n(\lambda-1,\ell)=V_{n-1}(\lambda,\ell)-\delta_{n,1},\ \ \ n\ge 1
	\end{equation}
	and the normalization 
	\begin{equation}\label{eq:basicrec1}
	V_n(0,\ell)+\ldots+V_n(\ell-1,\ell)=0.
	\end{equation}
	By Theorem \ref{th:quasipoly} below, there are polynomials 
	$\mathrm{Ver}_n$ in $\lambda,\ell$ such that $\Ver_n(\lambda,\ell)=
	V_n(\lambda,\ell)$ for $-n<\lambda<\ell$. 
	These can be computed recursively, using \eqref{eq:basicrec} and \eqref{eq:basicrec1}. In low degrees, 
\begin{align*}
\Ver_0(\lambda,\ell)&=0,\\
\Ver_1(\lambda,\ell)&=-\lambda+\frac{\ell-1}{2},\\\Ver_2(\lambda,\ell)&=-\frac{1}{2}\lambda^2+(\frac{\ell}{2}-1)\lambda-\frac{\ell^2}{12}+\frac{\ell}{2}-\frac{5}{12}.\end{align*}
The polynomials $\on{Ber}_n(\lambda)$ are a `classical limit' of the polynomials $\on{Ver}_n(\lambda,\ell)$, in the sense that 
\begin{equation} 
\label{eqn:ClassLim}
\lim_{\ell \to \infty} \ell^{-n} \on{Ver}_n(\ell \lambda,\ell)=\on{Ber}_n(\lambda).
\end{equation}
See Section \ref{subsec:BernoulliAndVerlinde} for an explanation of this fact.

\end{example}


\subsection{Basic properties of Verlinde sums.}\label{subsec:basic}
Throughout this section, $\Lambda\subset \Xi\subset \liet$ are given lattices of maximal rank, and $\ubfalpha=(\bfalpha,\bfu)$ is a  list of augmented weights. 
We assume that $\alpha\neq 0$ for all $\alpha\in\bfalpha$. 
We will describe some general properties of the resulting Verlinde sum. 
\begin{enumerate}
	\item  \label{it:dualtorus}
	Since the function $V_{\ubfalpha}(\cdot,\ell)$ on $\Lambda^\ast$ is  $\ell \Xi^\ast$-periodic, it descends 
	to a function on the finite group $\Lambda^\ast/\ell \Xi^\ast$ (the dual group to $\lieT_\ell=l^{-1}\Xi/\Lambda$).  By definition,  this function is the finite Fourier transform of the function 
		$\lieT_\ell \rightarrow \bC$, given by 
	$t\mapsto \prod_{(\alpha,u)\in \ubfalpha}\,(1 - u\,t^{-\alpha})^{-1}$ 
	if all $u\,t^{-\alpha}\neq 1$, and $t\mapsto 0$ otherwise. 
	By inverse Fourier transform, the value of this function at $t=e$ is the sum of 
	$V_{\ubfalpha}(\cdot,\ell)$ over $\Lambda^\ast/\ell \Xi^\ast$. That is, 
\[ \sum_{[\lambda]\in \Lambda^\ast/\ell \Xi^\ast} V_{\ubfalpha}(\lambda,\ell)=\begin{cases}
\prod_{u\in\bfu} (1-u)^{-1} & \mbox{ if }1\not\in\bfu,\\
0 & \mbox{ if }1\in\bfu,\\
\end{cases}\]
where the sum picks one representative $\lambda$ from each equivalence class. 
This generalizes  \eqref{eq:basicrec1}.
\item\label{it:reality} The Bernoulli series \eqref{eq:berseries} is real-valued, since complex conjugation amounts to replacing $\xi$ with $-\xi$ in the sum. Similarly, the Verlinde sum satisfies 
\[ V_{\ubfalpha}(\lambda,\ell)^*=V_{\ubfalpha^*}(\lambda,\ell),\]
where $\ubfalpha^*$ is the list of all $(\alpha,u^*)$ with 	$(\alpha,u)\in \ubfalpha$.
\item 	\label{it:support}
One knows \cite{boy:mul} that the Bernoulli series $B_{\bfalpha}$ is supported on 
the union of (top-dimensional) admissible subspaces $\Delta\in \calS(\bfalpha)$. That is, $\mathrm{supp}(B_{\bfalpha})\subset \Delta_0+\Xi^\ast$ where 
$\Delta_0=\sp_{\bR}\bfalpha$. We will see that similarly, $\mathrm{supp}(V_{\ubfalpha})$ is contained in the set of all $\lambda,\ell$ such that 
$\lambda/\ell\in \Delta_0+\Xi^\ast$. If $\Delta_0$  is a proper subspace of $\liet^\ast$, then the Verlinde sum for $\ubfalpha=(\bfalpha,u)$ is related to lower-dimensional Verlinde sums, as follows.  
Let $\liet'=\liet/\liet_{\Delta_0}$, and denote by $\Lambda', \Xi'$ be the images of $\Lambda,\Xi$ under the quotient map, so that $(\Lambda')^*=\Lambda^*\cap \Delta_0$, and similarly for $(\Xi')^*$. 
Denote by $\bfalpha'$ the list of weights $\bfalpha$, but regarded as weights for 
$\lieT'=\liet'/\Lambda'$, and put  $\ubfalpha'=(\bfalpha',\bfu)$.
\begin{proposition}\label{prop:lowerdimensional}\label{prop:support1}
The support of the Verlinde sum satisfies 
\[ \mathrm{supp}(V_{\ubfalpha})\subset 
\left\{(\lambda,\ell)\in \Lambda^\ast\times \bN\,\Big|\ \frac{1}{\ell}\lambda\in 
 \Delta_0+\Xi^\ast\right\}.
\]
That is, $V_{\ubfalpha}(\lambda,\ell)$ is zero unless there exists $\lambda'\in (\Lambda')^*$ with $\lambda-\lambda'\in 
	\ell \Xi^\ast$. 
	In the latter case, 
	\[   V_{\ubfalpha}(\lambda,\ell)=\#(T_\ell\cap \lieT_{\Delta_0})\  V_{\ubfalpha'}(\lambda',\ell).\]
\end{proposition}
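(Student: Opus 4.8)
The plan is to decompose the sum defining $V_{\ubfalpha}(\lambda,\ell)$ along the fibers of the projection from $\lieT_\ell$ onto the smaller torus $\lieT'_\ell$, and to evaluate the resulting inner sums by orthogonality of characters. To set up the geometry: since $\Delta_0=\sp_\bR\bfalpha$ and $\liet_{\Delta_0}=\ann(\Delta_0)$, each $\alpha\in\bfalpha$ pairs trivially with $\liet_{\Delta_0}$, so $t^{-\alpha}=1$ for $t\in\lieT_{\Delta_0}=\exp(\liet_{\Delta_0})$. Hence the quotient $\pi\colon\liet\to\liet'=\liet/\liet_{\Delta_0}$ induces a surjection $p\colon\lieT_\ell\to\lieT'_\ell$ with kernel $K:=\lieT_\ell\cap\lieT_{\Delta_0}$, and each $\alpha$ descends to the weight $\alpha'$ of $\lieT'$ appearing in $\bfalpha'$, with $t^{-\alpha}=(t')^{-\alpha'}$ for $t'=p(t)$. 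In particular the denominator $\prod_{(\alpha,u)}(1-u\,t^{-\alpha})$ is constant along fibers of $p$ and equals the denominator of $V_{\ubfalpha'}$ at $t'$; its vanishing locus is a union of fibers, so the primed summations are compatible on the two sides.

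I would then regroup the sum over fibers. Choosing a lift $\hat t\in\lieT_\ell$ of each admissible $t'\in\lieT'_\ell$, the contribution of $p^{-1}(t')$ is $[\,\text{denominator at }t'\,]^{-1}\,\hat t^{\lambda}\sum_{s\in K}s^{\lambda}$. By orthogonality of characters, $\sum_{s\in K}s^{\lambda}$ equals $\#K$ when the character $s\mapsto s^{\lambda}$ is trivial on $K$, and $0$ otherwise. If it is nontrivial, every fiber contributes $0$ and $V_{\ubfalpha}=0$; this will yield the support statement once the triviality condition is identified.

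The crux is to decide when $\lambda|_K$ is trivial, for which I would use character duality. The dual group of $\lieT_\ell$ is $\Lambda^\ast/\ell\Xi^\ast$, and likewise $\widehat{\lieT'_\ell}=(\Lambda')^\ast/\ell(\Xi')^\ast$ with $(\Lambda')^\ast=\Lambda^\ast\cap\Delta_0$ and $(\Xi')^\ast=\Xi^\ast\cap\Delta_0$. Dualizing $1\to K\to\lieT_\ell\xrightarrow{p}\lieT'_\ell\to1$ identifies the characters of $\lieT_\ell$ trivial on $K$ with the image of $\widehat{\lieT'_\ell}$ under the inclusion $(\Lambda')^\ast\hookrightarrow\Lambda^\ast$, i.e. with those $\lambda$ for which $\lambda-\lambda'\in\ell\Xi^\ast$ for some $\lambda'\in(\Lambda')^\ast$; this is exactly the condition $\tfrac1\ell\lambda\in\Delta_0+\Xi^\ast$, giving the claimed support. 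In the surviving case I would check $\hat t^{\lambda}=(t')^{\lambda'}$: the factor $\hat t^{\lambda-\lambda'}$ is trivial since $\lambda-\lambda'\in\ell\Xi^\ast$ pairs integrally with $\tfrac1\ell\Xi$, while $\hat t^{\lambda'}$ descends to $(t')^{\lambda'}$ because $\lambda'\in\Delta_0$ kills $\lieT_{\Delta_0}\supset K$ (which also shows independence of the lift). Summing the fiber contributions then gives $V_{\ubfalpha}(\lambda,\ell)=\#K\cdot V_{\ubfalpha'}(\lambda',\ell)$, as claimed.

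The main obstacle is the duality step: carefully pinning down the identifications $\widehat{\lieT_\ell}\cong\Lambda^\ast/\ell\Xi^\ast$ and $(\Lambda')^\ast=\Lambda^\ast\cap\Delta_0$, and verifying that the dual of $p$ is induced by the inclusion $(\Lambda')^\ast\hookrightarrow\Lambda^\ast$, so that ``trivial on $K$'' translates precisely into the stated arithmetic condition on $\lambda$. The remaining steps are routine regroupings and finite character sums.
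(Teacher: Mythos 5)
Your argument is correct and follows essentially the same route as the paper: both proofs regroup the Verlinde sum over the cosets of $K=\lieT_\ell\cap\lieT_{\Delta_0}$ in $\lieT_\ell$, use that the denominator is constant along these cosets, evaluate the inner sum $\sum_{s\in K}s^\lambda$ by orthogonality of characters, and translate the triviality of $\lambda|_K$ into the condition $\lambda\in(\Lambda')^\ast+\ell\Xi^\ast$ before identifying the outer sum with $V_{\ubfalpha'}(\lambda',\ell)$. The only cosmetic difference is that you phrase the triviality condition via Pontryagin duality for the exact sequence $1\to K\to\lieT_\ell\to\lieT'_\ell\to 1$, whereas the paper states it directly as $\pair{\lambda}{\xi}\in\ell\bZ$ for all $\xi\in\Xi\cap\liet_{\Delta_0}$; these are equivalent.
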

\begin{proof}
	If $t_1\in \lieT_\ell$ and $t_2\in \lieT_\ell\cap \lieT_{\Delta_0}$, then 
$ (t_1t_2)^{-\alpha}=t_1^{-\alpha}\, t_2^{-\alpha}=t_1^{-\alpha}$ for all $\alpha\in\bfalpha$. 
	We can therefore  carry out the Verlinde sum in stages, by first summing over all elements in a fixed equivalence class of $\lieT_\ell$ modulo $\lieT_\ell\cap \lieT_{\Delta_0}$, followed by a sum over $\lieT_\ell/(T_\ell\cap \lieT_{\Delta_0})$.  For fixed $t_1\in \lieT_\ell$ such that $ut_1^{-\alpha}\neq 1$ for all 
	$(\alpha,u)\in\ubfalpha$, we have that 
	\[ \sum_{t_2\in \lieT_\ell\cap \lieT_{\Delta_0}} \frac{(t_1t_2)^{\lambda}}{\prod_{(\alpha,u)\in \ubfalpha}\, 1-u\,(t_1t_2)^{-\alpha}}=\frac{t_1^{\lambda}}{ \prod_{(\alpha,u)\in \ubfalpha}\,1-u\, t_1^{-\alpha}}
	\sum_{t_2\in \lieT_\ell\cap \lieT_{\Delta_0}} t_2^{\lambda}.
	\]
	The sum on the right hand side vanishes unless $t_2^{\lambda}=1$ for all $t_2\in \lieT_\ell\cap \lieT_{\Delta_0}$, in which case it is $\# \lieT_\ell\cap \lieT_{\Delta_0}$. 
	But this condition is equivalent to 
	$\pair{\lambda}{\xi}\in \ell\bZ$ for all $\xi\in \Xi\cap \liet_{\Delta_0}$, that is, 
	$\lambda\in 
	\Lambda^\ast\cap(
	{\Delta_0}
	 + \ell \Xi^\ast)=(\Lambda')^\ast+\ell\Xi^\ast$. Assuming that this is the case, pick $\lambda'\in (\Lambda')^\ast$ such that $\lambda-\lambda'\in \ell\Xi^\ast$. 
	Then $t_1^{\lambda}=(t_1')^{\lambda'}$,  where $t_1'\in \lieT_\ell/(T_\ell\cap \lieT_{\Delta_0})$ is the image of $t_1$. Similarly, $t_1^{-\alpha_k}=(t_1')^{-\alpha'_k}$. Carrying out the sum over  
	$\lieT_\ell/(T_\ell\cap \lieT_{\Delta_0})$, we obtain $\#(T_\ell\cap \lieT_{\Delta_0})\  V_{\ubfalpha'}(\lambda',\ell)$ as desired. 
\end{proof}

\item 	\label{it:primitive}
We may always reduce to the case that the lattice vectors from the list $\bfalpha$  are \emph{primitive}, i.e, not positive multiples of shorter lattice vectors. Indeed, suppose 
$(\alpha,u)\in \ubfalpha$ with $\frac{1}{m}\alpha\in \Lambda^*$ for some integer $m>1$. Let $\tilde{\ubfalpha}$ be obtained from $\ubfalpha$ by replacing $(\alpha,u)$ 
with $(\frac{\alpha}{m},\zeta_1),\ldots(\frac{\alpha}{m},\zeta_m)$, where $\zeta_1,\ldots,\zeta_m$ are the distinct solutions of $\zeta^m=u$.  
Then  $V_{\ubfalpha}=V_{\tilde{\ubfalpha}}$, since  
\[ (1-u\,t^{-\alpha})=
\prod_{k=1}^m (1-\zeta_k\ t^{\frac{-\alpha}{m}}).
\]

\item \label{it:quasipol}
A function
$f\colon \Gamma \rightarrow \bC$
on a lattice $\Gamma$ 
is  \emph{quasi-polynomial} if there is a sublattice $\Gamma^\prime \subset \Gamma$ of finite index such that $f$ is given by a polynomial on each coset of $\Gamma^\prime$.  More generally, given a subset $S\subset \Gamma$, 
a function $f\colon S\to \bC$ is \emph{quasi-polynomial on $S$} if it is the restriction of a 
quasi-polynomial on $\Gamma$. (Note that this condition is vacuous if $S$ is a finite subset.) If $S$ contains `sufficiently many points', for example if it contains 
the intersection of $\Gamma$ with an open cone in the underlying vector space $\Gamma\otimes_\bZ \bR$, then the quasi-polynomial on $S$ extends uniquely to a quasi-polynomial on all of $\Gamma$. 

%
\begin{theorem}[Szenes\cite{sz:ver}]\label{th:quasipoly}
Suppose $\Delta$ is a top-dimensional affine subspace in $\calS(\bfalpha)$, and 
$\frakc\subset\Delta$ is a chamber (cf.~ Definition \ref{def:arrangements}).  Then the restriction of the Verlinde sum $V_{\ubfalpha}$ to the set
\begin{equation}\label{eq:region}
 \{(\lambda,\ell)\in \Lambda^\ast\times \bN|\ \lambda\in \ell \frakc - \Box \bfalpha\}\end{equation}
is quasi-polynomial. 
\end{theorem}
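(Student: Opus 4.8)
The plan is to argue by a double induction, on the rank $r=\dim\liet$ and on the number of weights $n=|\bfalpha|$, using the difference equations satisfied by $V_{\ubfalpha}$ together with the lower-dimensional reduction of Proposition~\ref{prop:support1}. At the outset I would make two harmless reductions. First, by property~\ref{it:primitive} I may assume every $\alpha\in\bfalpha$ is primitive. Second, since $\Delta$ is top-dimensional in $\calS(\bfalpha)$ and $V_{\ubfalpha}(\cdot,\ell)$ is $\ell\Xi^\ast$-periodic, the periodicity lets me translate $\Delta$ to $\Delta_0=\sp_\bR\bfalpha$; and if $\Delta_0\neq\liet^\ast$ then Proposition~\ref{prop:support1} already rewrites $V_{\ubfalpha}$ in terms of a Verlinde sum on the quotient $\liet/\liet_{\Delta_0}$, which is quasi-polynomial by the induction on rank. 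Thus I may assume $\bfalpha$ spans $\liet^\ast$, so that $\Delta=\liet^\ast$ and $\frakc$ is a genuine top-dimensional chamber.

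The engine of the induction is the twisted difference operator. Writing $S_\alpha$ for the shift $(S_\alpha f)(\lambda)=f(\lambda-\alpha)$, a direct manipulation of \eqref{eq:verlindesum} gives, for a chosen augmented weight $(\alpha,u)\in\ubfalpha$ with $\ubfalpha'=\ubfalpha-\{(\alpha,u)\}$,
\[
 (1-u\,S_\alpha)\,V_{\ubfalpha}(\lambda,\ell)=V_{\ubfalpha'}(\lambda,\ell)-C(\lambda,\ell),
\]
where $C$ collects exactly the terms $t\in\lieT_\ell$ with $t^{\alpha}=u$ that are excluded from $V_{\ubfalpha}$ but not from $V_{\ubfalpha'}$. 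Because $\{t\in\lieT_\ell:t^{\alpha}=u\}$ is a coset of the rank-$(r-1)$ finite group $\lieT_\ell\cap\lieT_\alpha$, the correction $C$ is itself a root-of-unity twist of a Verlinde sum on the lower-rank torus $\lieT_\alpha=\exp(\ker\alpha)$, hence quasi-polynomial on the relevant region by the rank-induction. The term $V_{\ubfalpha'}$ has one fewer weight: if $\bfalpha'$ still spans it is handled by the $n$-induction, and otherwise by Proposition~\ref{prop:support1} and the rank-induction. So the right-hand side $g:=V_{\ubfalpha'}-C$ is quasi-polynomial on its region.

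It then remains to solve the difference equation $(1-u\,S_\alpha)V_{\ubfalpha}=g$ on the set $\{\lambda\in\ell\frakc-\Box\bfalpha\}$. Here the shape of the region is exactly engineered for the argument: since $\alpha$ is one of the generators of $\Box\bfalpha$ and $\Box\bfalpha=\Box\bfalpha'+[0,\alpha]$, the shift $S_\alpha$ links the regions attached to $\ubfalpha$ and $\ubfalpha'$ in a way that lets the equation be summed along the $\alpha$-direction starting from a boundary slice. A quasi-polynomial particular solution exists because $u$ is a root of unity and the discrete $\alpha$-antiderivative of a quasi-polynomial is again quasi-polynomial; any two solutions differ by a solution of the homogeneous equation $f(\lambda)=u\,f(\lambda-\alpha)$, which along each $\alpha$-line equals $u^{-k}$ times a constant and hence is a degree-zero quasi-polynomial. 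To pin down this ambiguity I would match $V_{\ubfalpha}$ with the constructed quasi-polynomial on one boundary slice of the region---using the support information together with the global summation identity of property~\ref{it:dualtorus} (the multivariable analogue of the normalization \eqref{eq:basicrec1})---and then propagate the equality across the whole chamber region by the recursion.

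The main obstacle I anticipate is precisely this last, bookkeeping-heavy step: verifying that the translates of the region $\ell\frakc-\Box\bfalpha$ under the shifts $S_{\alpha_i}$ interlock correctly, that the supports of the corrections $C$ meet the region only along a controlled boundary layer, and that the homogeneous ambiguity is genuinely killed by the summation identity. In other words, the analytic content (the difference equations) is elementary; the difficulty is the combinatorial geometry of chambers, zonotopes and their translates that makes the induction close up exactly on the stated region. An alternative, more self-contained but heavier route would be to invoke Szenes' iterated-residue formula for $V_{\ubfalpha}$ and observe that, for $(\lambda,\ell)$ in $\ell\frakc-\Box\bfalpha$, the residue is picked out by a fixed set of poles whose contribution is manifestly quasi-polynomial; I would keep this in reserve in case the boundary-matching in the difference-equation approach proves intractable.
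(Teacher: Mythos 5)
Your primary route (double induction via the difference equation of Proposition \ref{prop:differenceequation}) has a genuine gap at exactly the step you flag as ``bookkeeping-heavy'': pinning down the homogeneous ambiguity. The common kernel of the operators $\nabla_{\ul\alpha}$, $\ul\alpha\in\ubfalpha$, consists of functions quasi-invariant under $\bZ\bfalpha$, i.e.\ for each fixed $\ell$ a space of dimension $[\Lambda^\ast:\bZ\bfalpha]$ (spanned by characters $\mathsf{e}_t$ for the finitely many torsion points $t$ with $u\,t^{-\alpha}=1$ for all $(\alpha,u)$). The tool you propose to kill this ambiguity --- the summation identity of \ref{subsec:basic}\eqref{it:dualtorus} --- supplies a single scalar per $\ell$, and, worse, that scalar is a sum over a full fundamental domain of $\ell\Xi^\ast$, which for large $\ell$ is much larger than the region $\ell\frakc-\Box\bfalpha$ and meets many other chambers; evaluating it would require knowing $V_{\ubfalpha}$ precisely where you do not yet control it. (This is why the analogous 1D recursion \eqref{eq:basicrec}--\eqref{eq:basicrec1} does close up: there the region $(-n,\ell)$ contains the whole fundamental domain $\{0,\dots,\ell-1\}$ and the ambiguity is a single constant.) Moreover, even granting a matching for each fixed $\ell$, your argument gives no control on how the matching coefficients depend on $\ell$, whereas the theorem asserts quasi-polynomiality \emph{jointly} in $(\lambda,\ell)$ --- this joint statement is what is actually needed later (e.g.\ to define the germs $\Ver_{(\ubfalpha_\Delta;\mu)}$). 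Note also that the paper runs precisely this difference-equation scheme in the proof of Theorem \ref{th:decomposition}, but its Step 2 (agreement on a fundamental domain for $\bZ\bfalpha$ inside $\ell\gamma_{\Delta_1}-\Box\bfalpha$) is only possible because Theorem \ref{th:quasipoly} is already available via Remark \ref{rem:larger}; using the same scheme to prove Theorem \ref{th:quasipoly} itself is circular at that point.

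Your ``reserve'' route is in fact the paper's proof: one quotes Szenes' residue formula (Theorem \ref{SzenesFormulaTheorem}), which is stated exactly on the region $\lambda\in\ell\frakc-\Box\bfalpha$, and checks that each term $\iCT_{\bfH}\big(T_{\frakc,\bfH}(\lambda,\ell)f_{\ubfalpha}\big)$ is quasi-polynomial because the Laurent coefficients of $T_{\frakc,\bfH}(\lambda,\ell)\circ\bfbeta^{-1}$ in the coordinates $z_k=2\pi\i\beta_k(X)$ are quasi-polynomial in $(\lambda,\ell)$ (the only non-polynomial dependence entering through periodic phase factors $v_k^{\ell}$ and $t_p^{\lambda}$), while $f_{\ubfalpha}\circ\bfbeta^{-1}$ is independent of $(\lambda,\ell)$. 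That argument is correct but is not self-contained --- it takes the residue theorem of \cite{sz:ver} as a black box --- and since it is presented only as a one-sentence fallback, the proposal as written does not contain a complete proof by either route.
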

Szenes' paper does not state the result in this particular form (which we took from Vergne's lectures \cite{ver:sl}), but it is a simple consequence of the residue formula proved in \cite{sz:ver}. See Section \ref{subsec:szenes} below for details. Notice that 
the theorem gives a  quasi-polynomial behaviour not only on the lattice points of the cone $\bR_{>0}(\frakc\times \{1\})$, but even on a slightly larger region  
\[
 \{(x,t)\in \liet^\ast\times \bR_{>0}|\ x\in t\frakc - \Box \bfalpha\}.
\]
If $\Delta_0=\sp_{\mathbb{R}}\bfalpha=\liet^\ast$, then these regions give an open cover of $\liet^\ast \times \bR_{>0}$, as $\frakc$ varies over all open chambers. 
The appearance of the zonotope may be understood as follows: Note that if  if $\tilde{\ubfalpha}$ is obtained from $\ubfalpha$ by replacing some $(\beta,v)$ with $(-\beta,v^{-1})$, 
then 
$V_{\ubfalpha}(\lambda,\ell)=-v^{-1}\ V_{\tilde{\ubfalpha}}(\lambda+\beta,\ \ell)$. 
Hence, the quasi-polynomial behavior of $V_{\tilde\ubfalpha}$ on the cone over $\mathfrak{c}\times\{1\}$ implies the quasi-polynomial behavior of $V_{\ubfalpha}$ on the $\beta$-shifted cone. Applying this observation to all the weights, one obtains a
quasi-polynomial behavior on the shifted cone. 
\end{enumerate}


\subsection{Difference equation.}
Recall that the Verlinde sums $V_n$ in Example \ref{ex:basicexamples} satisfy a difference equation \eqref{eq:basicrec}, expressing $V_n(\lambda,\ell)-V_n(\lambda-1,\ell)$ in terms of $V_{n-1}(\lambda,\ell)$. 
We are interested in a version of this equation for the general Verlinde sums associated to a list $\ubfalpha$ of augmented weights. For $\ul\beta=(\beta,v)\in\ubfalpha$, 
denote by $\ubfalpha\backslash\ul\beta$ the list obtained by removing $\ul\beta$. 
Define a finite difference operator on functions  $f\in \Map(\Lambda^*,\bC)$ (see Appendix \ref{appsubsec:findif}):
\[ (\nabla_{\ul\beta} f)(\lambda)=f(\lambda)-v\,f(\lambda-\beta).\]
We will see that $\nabla_{\ul\beta} V_{\ubfalpha}$ is equal to $V_{\ubfalpha\backslash\ul\beta}$ modulo correction terms involving a lower-dimensional Verlinde sum. 
 Let $p\in \bN$ be the smallest natural number such that there exists $t_0\in \lieT_p$ with 
\begin{equation}\label{eq:t0}
 v\, t_0^{-\beta}=1.
 \end{equation}
For any $t\in \lieT $, denote by $\mathsf{e}_t\colon \Lambda^*\to \bC$ the map $\lambda\mapsto t^\lambda$. 
Recall the notation $\liet_\beta=\ker\beta,\ \lieT_\beta=\exp(\liet_\beta)$ from Definition \ref{def:arrangements}, and put $\Lambda_\beta=\Lambda\cap \liet_\beta$ and $\Xi_\beta=\Xi\cap \liet_\beta$.

\begin{proposition}[Difference equation]
\label{prop:differenceequation}
Given $\ul\beta\in \ubfalpha$, define $p\in \bN$ and $t_0\in \lieT_p$ by 
\eqref{eq:t0}. Then
\begin{equation}
\label{eq:deletionformula}
\nabla_{\ul\beta} V_{\ubfalpha}=
V_{\ubfalpha \sm \ul\beta}-\mathsf{e}_{t_0}\ 
 \delta_{p\bN}\ \pi^* V_{\ubfalpha'}.
\end{equation}
Here $\pi\colon \Lambda^*\to \Lambda_\beta^*$ is the 
projection, and $\ubfalpha'$ is the list of all augmented weights $(\pi(\alpha),\,t_0^{-\alpha}\,u)$ with $(\alpha,u)\in \ubfalpha\backslash \ul\beta$. 
\end{proposition}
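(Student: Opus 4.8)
The plan is to isolate the factor $(1-v\,t^{-\beta})$ contributed by $\ul\beta$ to the denominator of $V_{\ubfalpha}$ by a telescoping cancellation, and then to recognize the leftover boundary term as a lower-dimensional Verlinde sum living on $\lieT_\beta$. Concretely, first I would factor the denominator as $\prod_{(\alpha,u)\in\ubfalpha}(1-u\,t^{-\alpha})=(1-v\,t^{-\beta})\,D'(t)$ with $D'(t)=\prod_{(\alpha,u)\in\ubfalpha\sm\ul\beta}(1-u\,t^{-\alpha})$. For every $t\in\lieT_\ell$ at which the full denominator is non-zero,
\[ \frac{t^{\lambda}}{(1-v\,t^{-\beta})D'(t)}-v\,\frac{t^{-\beta}\,t^{\lambda}}{(1-v\,t^{-\beta})D'(t)}=\frac{t^{\lambda}}{D'(t)},\]
so that $\nabla_{\ul\beta}V_{\ubfalpha}(\lambda,\ell)=\sideset{}{'}\sum_{t}t^{\lambda}/D'(t)$, where the prime now omits only those $t$ with $v\,t^{-\beta}=1$ or $D'(t)=0$. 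Since the primed sum defining $V_{\ubfalpha\sm\ul\beta}$ omits only $t$ with $D'(t)=0$, subtracting gives the clean expression
\[ V_{\ubfalpha\sm\ul\beta}(\lambda,\ell)-\nabla_{\ul\beta}V_{\ubfalpha}(\lambda,\ell)=\sum_{\substack{t\in\lieT_\ell\\ v\,t^{-\beta}=1,\ D'(t)\neq 0}}\frac{t^{\lambda}}{D'(t)}=:C(\lambda,\ell),\]
which already reproduces the shape of \eqref{eq:deletionformula}; it remains to evaluate the correction $C$.

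Next I would examine the locus $\{t\in\lieT_\ell\mid v\,t^{-\beta}=1\}$. The assignment $t\mapsto t^{\beta}$ is a homomorphism $\lieT_\ell\to\U(1)$ with finite cyclic image, and a short computation with that cyclic structure shows that $v$ lies in the image exactly when $p\mid\ell$, where $p$ is the minimal level of \eqref{eq:t0}; this produces the factor $\delta_{p\bN}$. When $p\mid\ell$ we have $t_0\in\lieT_p\subset\lieT_\ell$, and the locus is the coset $t_0\cdot K$ with $K=\ker(t\mapsto t^{\beta})\cap\lieT_\ell$. Assuming $\beta$ primitive, the pairing $\pair{\beta}{\cdot}\colon\Lambda\to\bZ$ is surjective, and this lets me prove the lattice decomposition
\[ \{Y\in\ell^{-1}\Xi\mid\pair{\beta}{Y}\in\bZ\}=\ell^{-1}\Xi_\beta+\Lambda,\]
which identifies $K$ with the image of the injection $(\lieT_\beta)_\ell=(\ell^{-1}\Xi_\beta)/\Lambda_\beta\hra\lieT_\ell$.

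Finally I would substitute $t=t_0\,s$ with $s$ running over $(\lieT_\beta)_\ell$. Because $s^{\beta}=1$ one has $t^{\beta}=t_0^{\beta}=v$ automatically, while $t^{\lambda}=t_0^{\lambda}\,s^{\pi(\lambda)}$ and $u\,t^{-\alpha}=(u\,t_0^{-\alpha})\,s^{-\pi(\alpha)}$; hence $D'(t_0 s)$ becomes precisely the $\ubfalpha'$-denominator with augmentations $t_0^{-\alpha}u$, and the constraint $D'(t_0 s)\neq 0$ matches the primed summation condition for $V_{\ubfalpha'}$ on the nose. Pulling the constant $t_0^{\lambda}=\mathsf{e}_{t_0}(\lambda)$ out of the sum, I obtain $C(\lambda,\ell)=\mathsf{e}_{t_0}(\lambda)\,V_{\ubfalpha'}(\pi(\lambda),\ell)=\big(\mathsf{e}_{t_0}\,\pi^{*}V_{\ubfalpha'}\big)(\lambda,\ell)$, which is exactly \eqref{eq:deletionformula}.

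The main obstacle is the lattice identification $K=\im\big((\lieT_\beta)_\ell\to\lieT_\ell\big)$, and here primitivity of $\beta$ is genuinely needed: for a non-primitive $\beta$ the kernel $K$ is strictly larger than the subtorus image (already for $\liet=\bR$, $\beta=2$ and $\ell$ even there is an extra $2$-torsion solution), so that $C$ splits into several lower Verlinde sums rather than one and the single-term formula must be read for primitive $\beta$ (the general case following by iterating, since $1-v\,t^{-m\beta_0}=\prod_k(1-\zeta_k\,t^{-\beta_0})$, cf.\ the reduction in part~\ref{it:primitive}). Granting primitivity, the only remaining care is the bookkeeping that aligns the two primed summation ranges in the telescoping step and in the final substitution; everything else is direct computation.
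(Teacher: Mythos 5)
Your proof is correct and follows essentially the same route as the paper's: you identify $\nabla_{\ul\beta}$ with multiplication of the summand by $1-v\,t^{-\beta}$ (the paper phrases this via finite Fourier transform, you via telescoping, which is the same computation), reduce the correction term to a sum over the locus $\{t\in \lieT_\ell \mid v\,t^{-\beta}=1\}$, observe that this locus is nonempty exactly when $p\mid \ell$ and is then the coset $t_0\,(\lieT_\ell\cap \lieT_\beta)$, and conclude by the substitution $t=t_0 s$ that the correction equals $\mathsf{e}_{t_0}\,\pi^*V_{\ubfalpha'}$. Your explicit lattice decomposition $\{Y\in \ell^{-1}\Xi \mid \pair{\beta}{Y}\in\bZ\}=\ell^{-1}\Xi_\beta+\Lambda$ and the accompanying remark that primitivity of $\beta$ is genuinely needed for the solution locus to be a \emph{single} coset of $\lieT_\ell\cap\lieT_\beta$ (so that the formula has one correction term rather than several) is a point the paper leaves implicit, relying on the reduction of Section \ref{subsec:basic}\,\eqref{it:primitive}; this is a welcome extra level of care rather than a divergence in method.
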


\begin{proof}
 Under finite Fourier transform, the difference operator $\nabla_{\ul\beta}$ amounts to multiplication by $1-v t^{-\beta}$. (See Section \ref{appsubsec:finitefouriertransform}.)
 Thus, the summands in the formula for $\nabla_{\ul\beta} V_{\ubfalpha}(\lambda,\ell)$
  are the same as those for $V_{\ubfalpha \sm \ul\beta}(\lambda,\ell)$, but one is summing over $t\in \lieT_\ell$ such that $ut^{-\alpha}\neq 1$ for all $(\alpha,u)\in\bf\alpha$, whereas in the sum defining $V_{\ubfalpha \sm \ul\beta}(\lambda,\ell)$ this is only required for $(\alpha,u)\in\ubfalpha\backslash\ul\beta$. Thus, 
\begin{equation}\label{eq:fromthedef}
 V_{\ubfalpha \sm \ul\beta}(\lambda,\ell)-\nabla_{\ul\beta} V_{\ubfalpha}(\lambda,\ell)={\sum}''_{t\in \lieT_\ell}\frac{t^{\lambda}}{\prod_{(\alpha,u)\in \ubfalpha\backslash\ul\beta}\,(1-u \,t^{-\alpha})}\end{equation}
where the sum $\sum''$ is  over all $t\in \lieT_\ell$ such that 
$u t^{-\alpha}\neq 1$ for $\ul\alpha\neq \ul\beta$, but $v t^{-\beta}=1$. 
After fixing $t_0$ satisfying \eqref{eq:t0}, one can find $t\in \lieT_\ell$ with $v\,t^{-\beta}=1$ if and only if $\ell$ is a multiple of $p$, and any such $t$ 
differs from $t_0$ by an element of $\lieT_\ell\cap \lieT_\beta$. Hence, if 
 $\ell$ is not a multiple of $p$, then \eqref{eq:fromthedef}
 is an empty sum. 
If $\ell$ is a multiple of $p$, write  $t=t_0 t'$ as above. As $t$ ranges over all elements of $\lieT_\ell$ such that 
$v t^{-\beta}=1$, the elements $t'$ range over $\lieT_\ell\cap \lieT_\beta$. 
Furthermore, 
\[ t^{\lambda}=t_0^{\lambda}\, (t')^{\pi(\lambda)},\ \ \ 
u\, t^{-\alpha} =u'\, (t')^{-\alpha'}\]
for $(\alpha,u)\in\ubfalpha\backslash\ul\beta$. 
Hence the sum on the right hand side becomes 
$t_0^{\lambda}\ V_{\ubfalpha'}(\pi(\lambda),\ell)$. 
\end{proof}

\begin{remark}\begin{enumerate}
\item In Proposition \ref{prop:differenceequation}, the Verlinde sum $V_{\ubfalpha'}$ depends on the choice of $t_0$, but its product with $\mathsf{e}_{t_0}$ does not. 
\item If there exists $(\alpha,u)\neq (\beta,v)$ in the list $\ubfalpha$ such that $u t^{-\alpha}=1$ whenever 
$v t^{-\beta}=1$, then the summation in \eqref{eq:fromthedef} is over an empty set, hence the sum is zero. In particular , this is the case if the list $\ubfalpha$ contains a second copy of $\ul\beta=(\beta,v)$. 
\end{enumerate}

\end{remark}
\begin{remark}
The counterpart to \eqref{eq:deletionformula} for multiple Bernoulli series expresses the 
directional derivative $\partial_\beta B_{\bfalpha}$ in terms of 
$B_{\bfalpha\backslash\beta}$, and possibly a correction term involving a lower-dimensional Bernoulli series. See 
 \cite[Proposition 3.1]{boy:mul}.
\end{remark}

\section{Partition functions.} \label{sec:partition}
Let $\ubfalpha=(\bfalpha,\bfu)$ be a list of augmented weights. We will assume that 
all $\alpha\in \bfalpha$ are nonzero.  
Replacing summation over $\lieT_\ell$ in the definition of the Verlinde sum by an integration over $\lieT$, we obtain at a formal expression 
\[ \int_T \frac{t^{\lambda}}{\prod_{(\alpha,u)\in \ubfalpha}\,(1-u\, t^{-\alpha})}\ d t.
\]
As it stands, the integral is not well-defined
due to the zeroes of the denominator. To `regularize' the integral, choose a \emph{polarizing vector}  $\tau \in \liet$ such that $\pair{\alpha}{\tau} \ne 0$ for all $\alpha\in\bfalpha$. Then 
\begin{equation}\label{eq:weaklimit}
\lim_{\epsilon\to 0^+} 
\prod_{(\alpha,u)\in \ubfalpha}\, \big(1-u\, t^{-\alpha}\ e^{-\epsilon \pair{\alpha}{\tau}})\big)^{-1}\end{equation}
is a well-defined generalized function of $t\in \lieT $. Taking its Fourier transform, we arrive at the following definition. 
\begin{definition} The \emph{generalized partition function}
$P_{(\ubfalpha,\tau)}\colon \Lambda^\ast \rightarrow \bC$
is defined by 
\[ P_{(\ubfalpha,\tau)}(\lambda)=\lim_{\epsilon\to 0^+}\int_T \frac{t^{\lambda}}{
\prod_{(\alpha,u)\in \ubfalpha}\, (1-u \,t^{-\alpha}\ e^{-\epsilon \pair{\alpha}{\tau}})}\ d t.\]
If  all  $u=1$ for all $u\in\bfu$, we will also use the notation $ P_{(\bfalpha,\tau)}$. 
\end{definition}
Generalized partition functions were studied, e.g.,  in \cite{gu:on,gu:he,sz:res1}.
They are a discrete counterpart to the generalized Heaviside function 
\[ H_{(\bfalpha,\tau)}(\lambda)=
\lim_{\epsilon\to 0^+} \int_{\liet} \frac{ e^{2\pi \i \langle \lambda,\xi\rangle}}{\prod_{\alpha\in\bfalpha}  \langle\alpha,2\pi \i\xi+\epsilon\tau\rangle} \ d\xi,\]
here $d\xi$ is the measure on $\liet$ corresponding to the normalized Haar measure on $\lieT$, and the integral is defined as a generalized function of $\lambda\in\liet^\ast$.
\begin{example}\label{ex:basicexamples1}
We continue Example \ref{ex:basicexamples}, thus $\liet=\bR,\ \Lambda=\Xi=\bZ$, with $\bfalpha$ the list consisting of the weight $1$, repeated $n$ times. Let $\tau=1$, and write $H_n(\lambda)=H_{(\bfalpha,\tau)}(\lambda)$ and $P_n(\lambda)=P_{(\bfalpha,\tau)}(\lambda)$. One finds that 
\[ H_n(\lambda)=\begin{cases}\f{\lambda^{n-1}}{(n-1)!}& \lambda\ge 0,\\
0 & \lambda<0.\end{cases}\]
Similarly, $P_n(\lambda)$ is  the number of ways of writing $\lambda\in\Z$ as a sum 
$\lambda=k_1+\ldots+k_n$ where all $k_i\ge 0$.  Thus, 
\[ P_1(\lambda)=1,\ \ P_2(\lambda)=\lambda+1,\ \  P_3(\lambda)=\frac{1}{2}\lambda^2+\frac{3}{2}\lambda+1,\ldots\] 
for $\lambda\ge 0$, while $P_n(\lambda)=0$ for $\lambda<0$. 
\end{example}

 \medskip

Here are some of the basic properties of the generalized partition functions. 
\begin{enumerate}
\item \label{it:a}
Write $\ubfalpha=\{\ul\alpha_1,\ldots,\ul\alpha_n\}$ with $\ul\alpha_k=(\alpha_k,u_k)$. If the list $\bfalpha$ is \emph{polarized} in the sense that $\pair{\alpha_k}{\tau}>0$ for all $k$, then the integrand \eqref{eq:weaklimit} can be expanded into a geometric series, and the limit $\eps\to 0$ gives 
\[ \prod_{k=1}^n \big(\sum_{j_k=0}^\infty u_k^{j_k}\ t^{-j_k\alpha_k}\big),\]
where the sum is defined as a generalized function on $\lieT$.  
The integration against $t^{\lambda}$ picks out the coefficient of $t^{-\lambda}$ in this expansion, i.e., $P_{(\ubfalpha,\tau)}(\lambda)=\sum u_1^{j_1}\cdots u_n^{j_n}$ where the sum is over all solutions of $j_1 \alpha_1+\ldots+j_n\alpha_n=\lambda$, with
 $j_k\in\Z_{\ge 0}$. If all  $u_k=1$, then it is simply the number of such solutions; thus $P_{(\ubfalpha,\tau)}$ becomes a \emph{vector partition function}. 
 If $\bfalpha$ is the set $\mathfrak{R}_+$ of positive roots of a simple Lie algebra, and $\tau$ is in the positive Weyl chamber, then $P_{(\bfalpha,\tau)}$ is the \emph{Kostant partition function}. 
 
 In a similar fashion, $H_{(\bfalpha,\tau)}(\lambda)$ for a polarized list $\bfalpha$ is a constant (not depending on $\lambda$) times the volume of the $n-1$-dimensional polytope in $\R^n$ defined by $j_1 \alpha_1+\ldots+j_n\alpha_n=\lambda$, with
 $j_k\in\R_{\ge 0}$. See \cite{gu:on} for a detailed discussion.
 \item \label{it:b}
Suppose $\tilde\ubfalpha$ is obtained from $\ubfalpha$ by replacing some element $(\beta,v)\in \ubfalpha$ with $(-\beta,v^{-1})$. 
 Then 
\[
P_{(\ubfalpha,\tau)}(\lambda) =-v^{-1} P_{(\tilde\ubfalpha,\tau)}(\lambda+\beta).
\]
This follows from $(1-z)^{-1}=-z^{-1}(1-z^{-1})^{-1}$. 
\item \label{it:c}\label{it:partitionsupport}
Decompose the list $\ubfalpha$ into two sublists $\ubfalpha=\ubfalpha_+\cup \ubfalpha_-$, where $\alpha\in \ubfalpha_+$ if $\pair{\alpha}{\tau}>0$ and 
$\alpha\in \ubfalpha_-$ if $\pair{\alpha}{\tau}<0$. 
Let $\ubfalpha^{pol}$ be the 
\emph{polarized list}, consisting of $\ubfalpha_+$ together with all $(-\alpha,u^{-1})$ such that $(\alpha,u)\in \ubfalpha_-$. 
Define  
\[ \sigma=-\sum_{\alpha\in \bfalpha_-}\alpha.\]
Then $\pair{\sigma}{\tau}>0$, and item \eqref{it:b} above shows 
\[ P_{(\ubfalpha,\tau)}(\lambda)=\frac{(-1)^{\# \bfalpha_- }}{\prod_{u\in\bfu_-}  u}\ 
P_{(\ubfalpha^{pol},\tau)}(\lambda-\sigma).\]
In particular, using item \eqref{it:a} above, we see that the generalized partition 
function is supported in the closed cone spanned by the polarized weights, shifted by $\sigma$:
\[
\mathrm{supp}(P_{(\ubfalpha,\tau)})\subset 
\sigma+\bZ_{\ge 0}\ \bfalpha^{pol}.
 \]
\item 
In the case $\bfalpha$ spans $\t^\ast$, the functions $H_{(\bfalpha,\tau)}$ are piecewise polynomial $L^1$-functions on $\liet^\ast$, where the polynomials are supported on cones. Similarly, the partition functions $P_{(\ubfalpha,\tau)}$ exhibit a piecewise quasi-polynomial behavior. In the case of vector partition functions, this goes back to Dahmen-Micchelli 
\cite{dah:num}; see Szenes-Vergne \cite[Theorem 3.6]{sz:res1}  for the more general case. 
\item 
If $\ubfalpha$ is a disjoint union of two sublists $\ubfalpha',\ubfalpha''$, then 
\[ P_{(\ubfalpha,\tau)}=P_{(\ubfalpha',\tau)}\star P_{(\ubfalpha'',\tau)}.\]
Here the convolution (of functions on $\Lambda^\ast$) is well-defined, due to the support properties.
\item\label{it:FiniteDifferencePartition}
For $\ul\beta=(\beta,v)\in \ubfalpha$, since 
the finite-difference operator $\nabla_{\ul\beta}$ has the effect of multiplying the inverse Fourier transform by $(1-v t^{-\beta})$, we see that 
\[
\nabla_{\ul\beta} P_{(\ubfalpha,\tau)}=P_{(\ubfalpha\backslash\ul\beta,\tau)}.
\]
\item \label{it:shift}
For $h\in \lieT $, multiplication 
by the function $\mathsf{e}_h\colon \Lambda^*\to \bC,\ \lambda\mapsto h^\lambda$ has the effect of shifting the scalars of the augmented weights: That is, 
 \[
 \mathsf{e}_h \ 
 P_{(\ubfalpha,\tau)}=P_{(\tilde\ubfalpha,\tau)}\]
where $\ul{\tilde{\bfalpha}}$ consists of all $(\alpha,\,u\,h^{\alpha})$ with 
$(\alpha,u)\in\ul\bfalpha$.  
\item 
Suppose $\mathfrak{h}\subset \liet$ is a rational hyperplane,  and let $\pi\colon \Lambda^\ast\to (\Lambda\cap \mathfrak{h})^\ast=\Lambda^*/(\Lambda^*\cap \mathrm{ann}(\mathfrak{h}))$ be the quotient map.
Suppose the restrictions $\alpha|_\mathfrak{h}=\pi(\alpha)$ for $\alpha\in\bfalpha$ are all non-zero. If $\tau\in\lieh$ is polarizing for $\bfalpha$, then $\tau$ is also polarizing for the list $\bfalpha|_\mathfrak{h}$ consisting of the restrictions, and 

\[ P_{(\ubfalpha|_\mathfrak{h},\tau)}=\pi_* P_{(\ubfalpha,\tau)}\]
where  $\ubfalpha|_\mathfrak{h}=(\bfalpha|_\mathfrak{h},\bfu)$.
Here the push-forward (cf. Appendix \ref{appsubsec:pushpull}) is well-defined, since $\pi$ is proper on the support of 
$ P_{(\ubfalpha,\tau)}$ by item \eqref{it:c} above. 
\end{enumerate}

\section{Decomposition formula.}\label{sec:decomp}
Consider a list of weights $\bfalpha$, defining a Bernoulli series $B_{\bfalpha}$.  For any admissible affine subspace $\Delta\in\calS(\bfalpha)$, 
let $\bfalpha_\Delta$ be the sublist consisting of all $\alpha\in\bfalpha\cap \mathrm{ann}(\liet_\Delta)$; that is, $\alpha$ is parallel to $\Delta$. Its complement is denoted $\bfalpha^{\mathsf{c}}_\Delta=\bfalpha\backslash \bfalpha_\Delta$.

Fix an inner product on $\liet$, used to identify $\liet \simeq \liet^\ast$, and let $\gamma\in \liet$. Given $\Delta\in\calS(\bfalpha)$, let 
\[ \gamma_\Delta\in \Delta\] 
denote the orthogonal projection of $\gamma$ onto $\Delta$, and put 
\[ \tau_\Delta=\gamma_\Delta - \gamma\in\t_\Delta.\]
For a generic choice  of $\gamma$, all the projections $\gamma_\Delta$ for $\Delta\in\calS(\bfalpha)$ are in open chambers for $\Delta$, while  the $\tau_\Delta$ are polarizing vectors for the complement $\bfalpha^{\mathsf{c}}_\Delta$.

 Suppose $\mu\in \Delta$
is regular in $\Delta$, i.e., contained in one of the chambers $\frakc$ of $\Delta$. Boysal-Vergne defined 
\[ \on{Ber}_{(\bfalpha_\Delta;\mu)}\colon \liet^*\to \bC\]
to be the generalized function on $\liet^*$ with support on the affine subspace $\Delta$, which is given by a polynomial on $\Delta$ times a $\delta$-distribution in directions transverse to $\Delta$, and which coincides with $B_{\bfalpha_\Delta}$ on the chamber $\frakc$.  We will call this the \emph{polynomial germ of 
$B_{\bfalpha}$ with respect to $\frakc$}. In \cite[Theorem 9.3]{boy:mul}, A. Boysal and M. Vergne proved the following decomposition 
formula for Bernoulli series:
\begin{equation}\label{eq:boysalvergne}
 B_{\bfalpha}=\sum_{\Delta\in \calS} \on{Ber}_{(\bfalpha_\Delta;\gamma_\Delta)}\star 
H_{(\bfalpha_\Delta^{\mathsf{c}},\tau_\Delta)},
\end{equation}
(equality of generalized functions on $\liet^\ast$). The formula expresses 
the periodic generalized function $B_{\bfalpha}$ as a locally finite sum of generalized 
functions. The term indexed by $\Delta$ is supported on the affine cone $\Delta+\R_{\ge 0}\{\bfalpha_\Delta^{\mathsf{c}}\}$; a given point of $\liet^*$ is contained in only finitely many of these affine cones. In particular, 
none of the cones with $\Delta\neq\liet^*$ contains the center of expansion. If $\Delta_0=\sp(\bfalpha)=\liet^\ast$, then $\Delta_0$ gives a leading polynomial contribution supported on $\liet^\ast$, while the other terms give successive corrections away from the center of expansion. 

In this section, we will give an analogous result for the Verlinde sums. Let $\ubfalpha$ be a list of augmented weights.
For $\Delta\in\calS(\bfalpha)$,  denote by  $\ubfalpha_\Delta=(\bfalpha_\Delta,\bfu_\Delta)$ the sublist consisting of all $(\alpha,u)\in\ubfalpha$ such that $\alpha\in \mathrm{ann}(\liet_\Delta)$. 
According to Theorem \ref{th:quasipoly}, if $\frakc$ is a chamber of $\Delta$, the Verlinde sum $V_{\ubfalpha_\Delta}$ is 
quasi-polynomial on the set of $(\lambda,\ell)$ such that $\lambda\in\ell\frakc$. 
Given $\mu\in \frakc$, we define 
\begin{equation}\label{eq:qpergerm}
\Ver_{(\ubfalpha_\Delta;\mu)}\colon \Lambda^\ast\times\bN\to \bC \end{equation}
 to be the unique function supported on the set of $(\lambda,\ell)$ with $\lambda\in \ell\Delta$, such that this function is  quasi-polynomial on this set and agrees with $V_{\ubfalpha_\Delta}$ on the set of all $(\lambda,\ell)$ with $\lambda\in \ell\mathfrak{c}$. We refer to $\Ver_{(\ubfalpha_\Delta;\mu)}$ as the  \emph{quasi-polynomial germ} of $V_{\ubfalpha_\Delta}$ at $\mu$. 
 
\begin{remark}\label{rem:larger}
By Theorem \ref{th:quasipoly}, the Verlinde sum $V_{\ubfalpha_\Delta}$ agrees with $\Ver_{(\ubfalpha_\Delta;\mu)}$ on the slightly larger set where
$\lambda\in \ell\mathfrak{c}-\Box \bfalpha_\Delta$. 
\end{remark}


Pick a generic $\gamma\in \liet$, and define $\gamma_\Delta,\ \tau_\Delta$ for $\Delta\in \calS(\bfalpha)$ as above. The polarizing vectors $\tau_\Delta$ define partition functions $P_{(\ubfalpha_\Delta^{\mathsf{c}},\tau_\Delta)}$, with support in a 
shifted cone $\sigma_\Delta+\bR_{\ge 0}(\bfalpha_\Delta^{\mathsf{c}})^{pol}$ where 
(see Section \ref{sec:partition}\eqref{it:partitionsupport})
\begin{equation}\label{eq:sigmadelta}
\sigma_\Delta=-\sum_{\alpha\in (\bfalpha^{\mathsf{c}}_{\Delta})_-}\alpha,\end{equation}
a sum over weights $\alpha$ in $\bfalpha_\Delta^{\mathsf{c}}$ such that $\pair{\alpha}{\tau_\Delta}<0$. Then $\l\sigma_\Delta,\tau_\Delta\r \ge 0$; in fact 
$\langle\mu,\tau_\Delta\rangle\ge 0$ for all elements in this shifted cone. 
\begin{theorem}[Decomposition formula for Verlinde sums]
\label{DecompositionTheorem}
\label{th:decomposition}
Let $\ubfalpha$ be a list of augmented weights, and let $\gamma \in \liet^\ast$ be generic.  Then
\begin{equation}\label{eq:decompositionformula}
V_{\ubfalpha}=\sum_{\Delta \in \calS}\Ver_{(\ubfalpha_\Delta;\gamma_\Delta)}\star P_{(\ubfalpha_\Delta^{\mathsf{c}},\tau_\Delta)}
\end{equation}
(a locally finite sum). Here the term   indexed by $\Delta$ is 
is a (well-defined) convolution of functions on $\Lambda^\ast$, for fixed $\ell\in\bN$.
\end{theorem}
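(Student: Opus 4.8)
The plan is to prove the decomposition formula by reducing it to the already-established Boysal--Vergne formula for Bernoulli series \eqref{eq:boysalvergne}, combined with a direct inductive verification using the difference equation of Proposition \ref{prop:differenceequation}. First I would establish that the right-hand side of \eqref{eq:decompositionformula} is a locally finite, well-defined sum: the term indexed by $\Delta$ is the convolution of the quasi-polynomial germ $\Ver_{(\ubfalpha_\Delta;\gamma_\Delta)}$, which is supported on $\{(\lambda,\ell)\mid \lambda\in\ell\Delta\}$, with the partition function $P_{(\ubfalpha_\Delta^{\mathsf c},\tau_\Delta)}$, whose support lies in the shifted cone $\sigma_\Delta+\bR_{\ge 0}(\bfalpha_\Delta^{\mathsf c})^{pol}$ transverse to $\Delta$. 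Since each $\gamma_\Delta$ lies in an open chamber of $\Delta$ and the transverse cones point away from the center of expansion (the inequality $\langle\mu,\tau_\Delta\rangle\ge 0$), for fixed $(\lambda,\ell)$ only finitely many $\Delta$ contribute, exactly as in the Bernoulli case.

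The core of the argument is an induction on $n=\#\bfalpha$. The base case $\bfalpha=\emptyset$ is the Fourier-transform identity \eqref{eq:verlindeseriesempty}, which matches the single top-dimensional term on the right. For the inductive step, I would apply the finite-difference operator $\nabla_{\ul\beta}$ for some $\ul\beta=(\beta,v)\in\ubfalpha$ to both sides and show the two sides transform compatibly. On the left, Proposition \ref{prop:differenceequation} gives
\[
\nabla_{\ul\beta}V_{\ubfalpha}=V_{\ubfalpha\sm\ul\beta}-\mathsf{e}_{t_0}\,\delta_{p\bN}\,\pi^*V_{\ubfalpha'}.
\]
On the right, I would use that $\nabla_{\ul\beta}$ distributes over convolution (acting on either factor), that $\nabla_{\ul\beta}P_{(\ubfalpha_\Delta^{\mathsf c},\tau_\Delta)}=P_{(\ubfalpha_\Delta^{\mathsf c}\sm\ul\beta,\tau_\Delta)}$ whenever $\ul\beta\in\bfalpha_\Delta^{\mathsf c}$ (Section \ref{sec:partition}\eqref{it:FiniteDifferencePartition}), and a corresponding difference relation for the quasi-polynomial germs $\Ver_{(\ubfalpha_\Delta;\gamma_\Delta)}$ when $\ul\beta\in\bfalpha_\Delta$. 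The subspaces $\Delta\in\calS(\bfalpha)$ split into those for which $\beta\in\ann(\liet_\Delta)$ (so $\ul\beta\in\bfalpha_\Delta$) and those for which it does not; matching these two families against the two terms on the left-hand side is the heart of the bookkeeping.

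The main obstacle I anticipate is the correction term $\mathsf{e}_{t_0}\,\delta_{p\bN}\,\pi^*V_{\ubfalpha'}$ and its identification with the contributions from the subspaces $\Delta$ containing $\beta$ in their parallel list. This requires recognizing that the lower-dimensional Verlinde sum $V_{\ubfalpha'}$ living on $\Lambda_\beta^*$, together with the $t_0$-twist of the augmented scalars, reassembles precisely into the germs $\Ver_{(\ubfalpha_\Delta;\gamma_\Delta)}$ for $\Delta$ parallel to $\beta$, after accounting for the shift $\mathsf{e}_{t_0}$ via the scalar-shifting property of partition functions (Section \ref{sec:partition}\eqref{it:shift}). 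Because the difference equation only determines $V_{\ubfalpha}$ up to a periodic function constant in the $\beta$-direction, I would finally pin down the remaining ambiguity using the normalization from the summation identity in Section \ref{subsec:basic}\eqref{it:dualtorus}, together with the support property of Proposition \ref{prop:support1}, which forces the two sides to agree once their $\nabla_{\ul\beta}$-derivatives and a single boundary value coincide. An alternative, possibly cleaner route would bypass induction and instead apply the Chern--Weil/limit relation \eqref{eqn:ClassLim} to transport \eqref{eq:boysalvergne} directly, but the discrete correction terms make the inductive difference-equation approach more transparent.
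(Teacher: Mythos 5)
Your Step 1 (local finiteness, the induction on $n$, the application of $\nabla_{\ul\beta}$ to both sides, the case split according to whether $\ul\beta$ lies in $\ubfalpha_\Delta$ or $\ubfalpha_\Delta^{\mathsf{c}}$, and the matching of the correction term $\mathsf{e}_{t_0}\,\delta_{p\bN}\,\pi^*V_{\ubfalpha'}$ against the subspaces $\Delta$ with $\beta\in\bfalpha_\Delta$ via the bijection $\Delta\mapsto\pi(\Delta)$) is exactly the paper's argument, and is fine. The gap is in how you propose to remove the remaining ambiguity. After Step 1, the difference $V_{\ubfalpha}-\sum_\Delta V_{\ubfalpha}^\Delta$ is annihilated by \emph{all} $\nabla_{\ul\beta}$, $\ul\beta\in\ubfalpha$, hence is quasi-invariant under translation by the full sublattice $\bZ\bfalpha\subset\Lambda^*$ (invariant up to a character $\bZ\bfalpha\to\mathrm{U}(1)$). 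Since $\bZ\bfalpha$ has finite but generally nontrivial index, such a function is determined by its values on an entire fundamental domain for the $\bZ\bfalpha$-action (on each top-dimensional $\ell\Delta_1$), not by ``a single boundary value.'' Likewise the summation identity of Section \ref{subsec:basic}\eqref{it:dualtorus} supplies only one scalar constraint per $\ell$, which cannot pin down the $[\Lambda^*:\bZ\bfalpha]$-many unknown coset values. So your proposed normalization does not close the induction.

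What is actually needed --- and what the paper does in its Step 2 --- is to exhibit a concrete set containing a full system of representatives, namely $\Lambda^*\cap(\ell\gamma_{\Delta_1}-\Box\bfalpha)$ inside each top-dimensional $\ell\Delta_1$, and to verify two things there: (i) $V_{\ubfalpha}$ coincides with the single term $V_{\ubfalpha}^{\Delta_1}=\Ver_{(\ubfalpha_{\Delta_1};\gamma_{\Delta_1})}$ on that set, which is exactly the content of Theorem \ref{th:quasipoly} together with Remark \ref{rem:larger} (this is where the zonotope enlargement of the cone is essential); and (ii) every other term $V_{\ubfalpha}^\Delta$ with $\Delta\subsetneq\Delta_1$ has support disjoint from that set. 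Point (ii) is not automatic and requires the quantitative support estimate using the normal vector $\mathsf{n}=\gamma_\Delta-\gamma_{\Delta_1}=\tau_\Delta-\tau_{\Delta_1}$ and the shift $\sigma_\Delta$ of \eqref{eq:sigmadelta}: one checks $\pair{\lambda-\ell\gamma_{\Delta_1}}{\mathsf{n}}>\pair{\sigma_\Delta}{\mathsf{n}}$ on $\mathrm{supp}\,V_{\ubfalpha}^\Delta$ while the reverse weak inequality holds on $\ell\gamma_{\Delta_1}-\Box\bfalpha$. Your proposal contains neither ingredient. Separately, the suggested ``alternative route'' via the classical limit \eqref{eqn:ClassLim} cannot work: the Verlinde sum is not determined by its $\ell\to\infty$ limit, so the Boysal--Vergne formula for Bernoulli series cannot be transported back to give \eqref{eq:decompositionformula}.
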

\begin{proof}
We first show that the right hand side of the decomposition formula is locally finite, and hence well defined. 
The first factor in each term 
\begin{equation}\label{eq:term}
 V_{\ubfalpha}^\Delta=\Ver_{(\ubfalpha_\Delta;\gamma_\Delta)}\star P_{(\ubfalpha_\Delta^{\mathsf{c}},\tau_\Delta)}
\end{equation}
is supported in the set of $(\lambda,\ell)$ such that $\lambda\in \ell\Delta$, while according to Section \ref{sec:partition}
\eqref{it:partitionsupport}, the second factor is supported in $\sigma_\Delta+\bR_{\ge 0}(\bfalpha_\Delta^{\mathsf{c}})^{pol}$, 
using the polarization of $\bfalpha_\Delta^{\mathsf{c}}$ defined by 
$\tau_\Delta$. This shows that the convolution has support in the 
the set of all $(\lambda,\ell)\in \Lambda^\ast\times \bN$ such that $\lambda$ is contained in the affine cone 
\begin{equation}\label{eq:supportconvol}
\sigma_\Delta+\ell\Delta+\bR_{\ge 0}(\bfalpha_\Delta^{\mathsf{c}})^{pol}.
\end{equation}
For fixed $\lambda\in \Lambda^*$, there are only finitely many  $\Delta$ such that 
$\lambda$ is contained in \eqref{eq:supportconvol}. Hence the sum in \eqref{eq:decompositionformula} is locally finite.

The proof of the equality $V_{\ubfalpha}=\sum_{\Delta} V_{\ubfalpha}^\Delta$ 
involves an induction on the number $n$ of elements in $\bfalpha$. 
Consider first the base case $n=0$, so that the admissible subspaces are all $0$-dimensional: $\calS(\emptyset)=\Xi^\ast$.  For $\Delta=\{\mu \}$ with $\mu \in \Xi^\ast$,  we have that $\lieT_\Delta=\lieT$, $\gamma_\Delta=\mu$, and consequently 
$V_{\emptyset}^\Delta(\lambda,\ell)=\# \lieT_\ell\ \delta_{\ell\mu}(\lambda)$. 
Summation over all $\Delta=\{\mu\}$ gives the expression \eqref{eq:verlindeseriesempty} for $V_\emptyset$.

Suppose now that $\bfalpha$ has $n>0$ elements, and that the decomposition formula has been proved for lists with at most $n-1$ elements. The argument for $V_{\bfalpha}$ splits into two stages. In the first step, we will show that the difference $ V_{\ubfalpha}-\sum_\Delta V_{\ubfalpha}^\Delta$ 
is annihilated by all difference operators $\nabla_{\ul\beta}$, for  $\ul\beta \in \ubfalpha$. By definition of the difference operator, 
this implies that for any given $\ell$, this difference is quasi-invariant under the action of the sublattice $\bZ\bfalpha\subset \Lambda^\ast$, i.e. invariant up to some character 
$\bZ\bfalpha\to \mathrm{U}(1)$. 
 In the second step, we show that $V_{\bfalpha}$ coincides with $ \sum_\Delta V_{\ubfalpha}^\Delta $ on the lattice points of some fundamental domain for the action of $\bZ\bfalpha$. Together, these two steps show that they agree everywhere.


\bigskip

\noindent{\bf Step 1.}  Claim: For all $\ul\beta\in\ubfalpha$, we have that $\nabla_{\ul\beta} V_{\ubfalpha}=\sum_{\Delta\in\calS(\bfalpha)} \nabla_{\ul\beta} V_{\ubfalpha}^\Delta$.
\bigskip

Recall the difference equation \eqref{eq:deletionformula}: 
$
\nabla_{\ul\beta} V_{\ubfalpha}=
V_{\ubfalpha \sm \ul\beta}-\mathsf{e}_{t_0}\ 
 \delta_{p\bN}\ \pi^* V_{\ubfalpha'}$. 
To compare with the sum over all $\nabla_{\ul\beta} V_{\ubfalpha}^\Delta$ with $\Delta\in\calS(\bfalpha)$, we consider two cases: 

{\bf Case (i):} $\ul\beta\in \ubfalpha_\Delta^{\mathsf{c}}$. Apply $\nabla_{\ul\beta}$ to \eqref{eq:term}, using the property $ \nabla_{\ul\beta}(f\star g)=f\star\nabla_{\ul\beta}(g)$ of the convolution. 
Since  $\nabla_{\ul\beta} P_{(\ubfalpha _\Delta^{\mathsf{c}},\tau_\Delta)}=P_{(\ubfalpha_\Delta^{\mathsf{c}} \sm \ul\beta;\,\tau_\Delta)}$ by Section \ref{sec:partition}\eqref{it:FiniteDifferencePartition}, and since
$\ubfalpha_\Delta=(\ubfalpha \sm \ul\beta)_\Delta$, we obtain  
\begin{equation} \label{eq:ja}\nabla_{\ul\beta} V_{\ubfalpha}^\Delta  
= V_{\ubfalpha\backslash\ul\beta}^\Delta.
\end{equation}

{\bf  Case (ii):} $\ul\beta\in \ubfalpha_\Delta$.  
Apply $\nabla_{\ul\beta}$ to \eqref{eq:term}, using the property $ \nabla_{\ul\beta}(f\star g)=\nabla_{\ul\beta}(f)\star g$  of the convolution.
 Lemma \ref{VerlindeFiniteDifference} below gives
\begin{equation}\label{eq:ja2}
 \nabla_{\ul\beta} \Ver_{(\ubfalpha_\Delta;\gamma_\Delta)}=
\Ver_{(\ubfalpha_\Delta\backslash\ul\beta;\gamma_\Delta)}-
\mathsf{e}_{t_0}\  \delta_{p\bN}\ \pi^*\,\Ver_{(\ubfalpha'_{\pi(\Delta)};\pi(\gamma_\Delta))}.
\end{equation}
Taking a convolution with $P_{(\ubfalpha_\Delta^{\mathsf{c}},\tau_\Delta)}$, the first term produces $V_{\ubfalpha\backslash\ul\beta}^\Delta$. For the second term, we use the property  $\mathsf{e}_t (f\star g)=(\mathsf{e}_t\ f)\star (\mathsf{e}_t\ g)$ and 
$(\pi^* f)\star g=\pi^* (f\star \pi_* g)$ 
of the convolution. Thus
\begin{equation}\label{eq:ja3}
 \nabla_{\ul\beta}  V_{\ubfalpha}^\Delta=
V_{\ubfalpha\backslash\ul\beta}^\Delta-\delta_{p\bN}\ \mathsf{e}_{t_0}\ \pi^*\big(\Ver_{(\ubfalpha'_{\pi(\Delta)};\pi(\gamma_\Delta))}\ast 
\pi_* \big(\mathsf{e}_{-t_0}P_{(\ubfalpha_\Delta^{\mathsf{c}},\tau_\Delta)}\big)
\big).
\end{equation}
Multiplying the partition function 
 $P_{(\ubfalpha_\Delta^{\mathsf{c}},\tau_\Delta)}$ 
by $\mathsf{e}_{t_0^{-1}}$ amounts to replacing every $(\alpha,u)\in \ubfalpha_\Delta^{\mathsf{c}}$
with $(\alpha,\,t_0^{-\alpha} u)$, while push-forward $\pi_\ast$ amounts to a restriction to $\liet_\beta$. 
Hence
\[ \pi_\ast\big(\mathsf{e}_{t_0^{-1}}\  P_{(\ubfalpha_\Delta^{\mathsf{c}},\tau_\Delta)}\big)
=P_{((\ubfalpha'_{\pi(\Delta)})^{\mathsf{c}};\pi(\tau_\Delta))}.
\]
The inner product on $\liet$ restricts to an inner product on $\liet_\beta\simeq \liet_\beta^\ast$ satisfying $\pi(\gamma_\Delta)=\pi(\gamma)_{\pi(\Delta)}$, 
which implies that
\[ \tau_{\pi(\Delta)}:=\pi(\tau_\Delta)=\pi(\gamma)_{\pi(\Delta)}-\pi(\gamma)\]
is the polarizing vector defined by $\pi(\gamma)$. 
We hence arrive at
\begin{equation}\label{eq:ja1}
\nabla_{\ul\beta} V_{\ubfalpha}^\Delta= V_{\ubfalpha\backslash\ul\beta}^\Delta
-\mathsf{e}_{t_0}\ \delta_{p\bN}\ \pi^\ast V_{\ubfalpha'}^{\pi(\Delta)},
\end{equation}
where the generic element used in defining $V_{\ubfalpha'}^{\pi(\Delta)}$ is $\pi(\gamma)$.

Having worked out $\nabla_{\ul\beta} V_{\ubfalpha}^\Delta$ for the two cases, 
let us now take  the sum over all admissible affine subspaces $\Delta\in\calS(\bfalpha)$.

Observe that if $\Delta\not \in \calS(\bfalpha\backslash\beta)$, then 
the term $V_{\ubfalpha\backslash\ul\beta}^\Delta$ is zero. Indeed, $\Delta\not \in \calS(\bfalpha\backslash\beta)$ means that 
$\mathrm{ann}(\liet_\Delta)$ is spanned by a subset of $\bfalpha$, but not by a subset of $\bfalpha \sm \beta$. As a consequence, the restriction of the Verlinde sum 
$V_{\bfalpha_\Delta\backslash\beta}$ to $\Delta$ is supported on a union of affine subspaces of codimension at least one in $\Delta$, and consequently $\Ver\big(\alpha'_{\pi(\Delta)};\gamma_\Delta\big)=0$, thus 
$V_{\ubfalpha\backslash\ul\beta}^\Delta=0$. 

On the other hand, 
the projection $\pi\colon \liet^\ast\to \liet_\beta^\ast$ gives a bijection from the set of $\Delta\in \calS(\bfalpha)$ for which $\beta\in \bfalpha_\Delta$ (case (ii) above), with the set $\calS(\bfalpha')$ of admissible affine subspaces for the list $\bfalpha'$. From equations \eqref{eq:ja}, \eqref{eq:ja1} we therefore obtain 
\begin{equation}\label{eq:almostthere}
\nabla_{\ul\beta}\ \sum_{\Delta\in\calS(\bfalpha)}  V_{\ubfalpha}^\Delta
=\sum_{\Delta\in\calS(\bfalpha\backslash\beta)}V_{\ubfalpha\backslash\ul\beta}^\Delta
-\mathsf{e}_{t_0}\ \delta_{p\bN}\ \pi^\ast 
\sum_{\Delta'\in \calS(\bfalpha')}
V_{\ubfalpha'}^{\Delta'}.\end{equation}
Applying the induction hypothesis to the right hand side, this is becomes 
\begin{equation}\label{eq:almostthere}
\nabla_{\ul\beta}\ \sum_{\Delta\in\calS(\bfalpha)}  V_{\ubfalpha}^\Delta
=V_{\ubfalpha\backslash\ul\beta}
-\mathsf{e}_{t_0}\ \delta_{p\bN}\ \pi^\ast 
V_{\ubfalpha'}.\end{equation}
Finally, by the difference equation \eqref{eq:deletionformula} this equals $\nabla_{\ul\beta} V_{\ubfalpha}$. \bigskip

\noindent{\bf Step 2.}  Claim:  $V_{\ubfalpha}$ agrees with the sum  $\sum_{\Delta} V_{\ubfalpha}^\Delta$ 
on a set of representatives  for the translation action of the lattice $\bZ \bfalpha\subset \Lambda^\ast$ 
in $\Lambda^\ast\times \bN$. 
\bigskip

Recall that by Proposition \eqref{prop:support1}, the Verlinde sum $V_{\ubfalpha}(\cdot,\ell)$ for fixed $\ell\in\bN$ is supported on the union of all $\ell\Delta_1$, for $\Delta_1\in \calS(\bfalpha)$ top-dimensional (i.e., a $\Xi^\ast$-translate of  $\Delta_0=\sp\,\bfalpha$). The same is true for  each of the summands $V_{\ubfalpha}^\Delta$. Since each $\ell\Delta_1$ is  invariant under the $\bZ\bfalpha$-action, 
 it suffices to show that the functions $V_{\bfalpha}(\cdot,\ell)$ and $\sum_\Delta V^\Delta_{\bfalpha}(\cdot,\ell)$ agree on a set of representatives for the $\bZ \bfalpha\subset \Lambda^\ast$ action on $\ell\Delta_1$, for any given $\ell$ and $\Delta_1$. The set 
\begin{equation}
\label{AgreementSet}
 \Lambda^*\cap \big(\ell\, \gamma_{\Delta_1} - \Box \bfalpha\big)
\end{equation}
contains such a set of representatives. By Remark \ref{rem:larger}, and the definition of the quasi-polynomial germ,  $V_{\ubfalpha}(\cdot,\ell)$ and  $\Ver_{(\ubfalpha_{\Delta_1};\gamma_{\Delta_1})}(\cdot,\ell)$ agree on \eqref{AgreementSet}. 
Since $\bfalpha_{\Delta_1}^{\mathsf{c}}=\emptyset$, the corresponding partition function $P_{(\ubfalpha_{\Delta_1}^{\mathsf{c}},\tau_{\Delta_1})}$ is simply $\delta_0\colon \Lambda^*\to \bC$. 
That is, 
\[ V_{\ubfalpha}(\cdot,\ell)=V_{\ubfalpha}^{\Delta_1}(\cdot,\ell)
\]
on \eqref{AgreementSet}. We claim that the supports of all other terms 
$V_{\ubfalpha}^\Delta(\cdot,\ell)$ with $\Delta\neq \Delta_1$ do not meet \eqref{AgreementSet}. This is immediate when 
$\Delta$ is not contained in $\Delta_1$, thus suppose $\Delta\subset \Delta_1$. 
Note that 
\begin{equation}\label{eq:itsnormal} \mathsf{n}:=\gamma_\Delta-\gamma_{\Delta_1}
=\tau_\Delta-\tau_{\Delta_1}
\end{equation}
is a normal vector to $\Delta$ as an affine subspace of $\Delta_1$. 
Since $\tau_{\Delta_1}$ is normal to $\Delta_1$, it is orthogonal to all 
$\alpha\in \bfalpha$. In the defining the polarization of $\bfalpha_\Delta^{\mathsf{c}}$, and hence in the description \eqref{eq:supportconvol}
of $\mathrm{supp}(V_{\ubfalpha}^\Delta(\cdot,\ell))$, we may therefore replace $\tau_\Delta$ with $\mathsf{n}$. It follows that the elements $\lambda\in\Lambda^\ast$ in the support of $V_{\ubfalpha}^\Delta(\cdot,\ell)$ satisfy  
\[\pair{\lambda-\ell\gamma_{\Delta_1}}{\mathsf{n}}>
\pair{\lambda-\ell\gamma_\Delta}{\mathsf{n}}\ge \pair{\sigma_\Delta}{\mathsf{n}}.\]	
On the other hand, if 
$\lambda\in \ell\gamma_{\Delta_1} -\sum_{\alpha\in\bfalpha}[0,1]\alpha$, then 
\[\pair{\lambda-\ell\gamma_{\Delta_1}}{\mathsf{n}}\le 
-\sum_{\alpha\in (\bfalpha^{\mathsf{c}}_{\Delta})_-} \l\alpha,\mathsf{n}\r=
\pair{\sigma_\Delta}{\mathsf{n}}.
 \]
Hence, these elements are never in the support of  $V_{\ubfalpha}^\Delta(\cdot,\ell)$ when $\Delta$ is properly contained in $\Delta_1$.  

This concludes  the argument for step 2, and completes the proof of the theorem. 
\end{proof}

In the proof we used the following difference equation for the quasi-polynomial germs, 
which is obtained as a consequence of the difference equation for Verlinde sums in 
Proposition \ref{prop:differenceequation}. Using the notation 
from that proposition we have:  
\begin{lemma}\label{VerlindeFiniteDifference}
	 Suppose $\ul\beta=(\beta,v) \in \ubfalpha_\Delta$, and let  $p\in \bN$ be the smallest number such that there exists $t_0\in \lieT_p$ with $v\,t_0^{-\beta}=1$.  
	 Let $\ubfalpha'_{\pi(\Delta)}$ be the list of augmented weights $(\pi(\alpha),\ u\,t_0^{-\alpha})$ for $(\alpha,u)\in\bfalpha_\Delta$ with $\ul\alpha\neq \ul\beta$.  
	    Then
	 \[ \Ver_{(\ubfalpha_\Delta \sm \ul\beta;\mu)}=\nabla_{\ul\beta} \Ver_{(\ubfalpha_\Delta;\mu)}+\mathsf{e}_{t_0}\ \delta_{p\bN}\ \pi^*\Ver_{(\ubfalpha'_{\pi(\Delta)};\pi(\mu))}.\]
	 Here $\pi\colon \Lambda^*\to \Lambda_\beta^*$ is the projection. 
\end{lemma}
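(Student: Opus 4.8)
The plan is to deduce the difference equation for quasi-polynomial germs directly from the difference equation for Verlinde sums (Proposition \ref{prop:differenceequation}) by comparing both sides on the chamber where the germs are defined to agree with the full Verlinde sums, and then extending the equality everywhere using the characterization of the germs as the unique quasi-polynomial functions supported on the relevant affine subspaces. The central idea is that both sides of the claimed identity are (quasi-polynomial germ type) functions supported on $\ell\Delta$, so it suffices to check they agree on a suitable chamber.

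\medskip

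\noindent\textbf{Step 1: Reduce to a statement on the chamber.} First I would recall that $\Ver_{(\ubfalpha_\Delta;\mu)}$ is by definition the unique function supported on $\{(\lambda,\ell)\mid \lambda\in\ell\Delta\}$ that is quasi-polynomial there and agrees with $V_{\ubfalpha_\Delta}$ on the set where $\lambda\in\ell\frakc$ (in fact on the larger set $\lambda\in\ell\frakc-\Box\bfalpha_\Delta$, by Remark \ref{rem:larger}). Applying Proposition \ref{prop:differenceequation} to the list $\ubfalpha_\Delta$ (working inside the subspace $\Delta$, i.e.\ for the lattice in $\t^*_\Delta$), one gets
\[
\nabla_{\ul\beta} V_{\ubfalpha_\Delta}=V_{\ubfalpha_\Delta\sm\ul\beta}-\mathsf{e}_{t_0}\,\delta_{p\bN}\,\pi^*V_{\ubfalpha'_{\pi(\Delta)}}.
\]
This is the exact analogue of the claimed germ identity, but with the full Verlinde sums in place of the germs. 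The claim is then that replacing each Verlinde sum by its germ at the relevant chamber preserves the identity.

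\medskip

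\noindent\textbf{Step 2: Match germs term-by-term on the chamber.} The key observation is that each of the three germs appearing in the statement agrees with its corresponding Verlinde sum on a suitable chamber, and that the finite-difference operator $\nabla_{\ul\beta}$ together with the pullback $\pi^*$ and shift $\mathsf{e}_{t_0}$ all preserve the support/quasi-polynomiality structure. Concretely, I would argue that the left-hand side $\Ver_{(\ubfalpha_\Delta\sm\ul\beta;\mu)}$ is supported on $\ell\Delta$ and quasi-polynomial there, while each term on the right-hand side is as well: $\nabla_{\ul\beta}\Ver_{(\ubfalpha_\Delta;\mu)}$ is a finite difference of a function supported on $\ell\Delta$ (note $\beta\in\bfalpha_\Delta$ is parallel to $\Delta$, so $\nabla_{\ul\beta}$ preserves the support condition $\lambda\in\ell\Delta$), and the correction term $\mathsf{e}_{t_0}\delta_{p\bN}\pi^*\Ver_{(\ubfalpha'_{\pi(\Delta)};\pi(\mu))}$ is supported where $\lambda$ lies in a translate of $\ell\Delta$ restricted appropriately. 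Both sides being quasi-polynomial germs supported on the same affine subspace, it suffices to verify equality on the chamber $\lambda\in\ell\frakc-\Box(\bfalpha_\Delta\sm\beta)$. On that chamber, each germ coincides with its Verlinde sum, and the desired identity reduces to the already-established Proposition \ref{prop:differenceequation} for $\ubfalpha_\Delta$.

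\medskip

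\noindent\textbf{The main obstacle} I anticipate is the careful bookkeeping of supports: one must check that on the chamber where I want to compare, \emph{all three} germs simultaneously agree with their Verlinde-sum counterparts, i.e.\ that the relevant zonotope shifts $\Box\bfalpha_\Delta$, $\Box(\bfalpha_\Delta\sm\beta)$, and the support of $\pi^*V_{\ubfalpha'_{\pi(\Delta)}}$ are all compatible so that a single chamber suffices. In particular, removing $\beta$ shrinks the zonotope, so the region on which $\Ver_{(\ubfalpha_\Delta\sm\ul\beta;\mu)}$ agrees with $V_{\ubfalpha_\Delta\sm\ul\beta}$ is \emph{larger}, and I must confirm the intersection of all the relevant agreement regions is nonempty and meets $\ell\frakc$. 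Verifying that $\pi(\mu)$ lands in a chamber of $\pi(\Delta)$ (so that the projected germ is well-defined) and that $\pi(\gamma_\Delta)=\pi(\gamma)_{\pi(\Delta)}$ under the induced inner product---a point already used in the proof of Theorem \ref{th:decomposition}---is the crux that makes the projected correction term a genuine quasi-polynomial germ rather than merely a restriction. Once these support compatibilities are in hand, the equality is forced by uniqueness of quasi-polynomial germs.
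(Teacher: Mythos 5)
Your proposal is correct and follows essentially the same route as the paper: apply Proposition \ref{prop:differenceequation} with $\ubfalpha$ replaced by $\ubfalpha_\Delta$, note that all terms are quasi-polynomial on the set where $\lambda\in\ell\frakc$ and supported on $\{\lambda\in\ell\Delta\}$, and conclude by uniqueness of the quasi-polynomial germ. The paper's proof is a two-sentence version of your Steps 1 and 2, and your extra bookkeeping (the zonotope shifts, and the compatibility $\pi(\gamma_\Delta)=\pi(\gamma)_{\pi(\Delta)}$ already recorded in the proof of Theorem \ref{th:decomposition}) only makes explicit what the paper leaves implicit.
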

\begin{proof}
	On the set of all $(\lambda,\ell)$ such that $\lambda\in\ell\mathfrak{c}$, 
	this formula coincides with the difference equation for Verlinde sums (Proposition \ref{prop:differenceequation}, with $\ubfalpha$ replaced by $\ubfalpha_\Delta$). 
	Since all terms in the deletion formula are quasi-polynomial on this set, the result follows. 
\end{proof}

\begin{example}
We illustrate the decomposition formulas for the Bernoulli series 
$B_n(\lambda)$ and the Verlinde sums $V_n(\lambda,\ell)$, using the 
notation from Examples \ref{ex:basicexamples}, \ref{ex:basicexamples1}. 
Take $\gamma\in \mathfrak{c}=(0,1)$. The contribution from  $\Delta_0=\liet^\ast$ give the leading term
\[ H^{\Delta_0}_{\bfalpha}(\lambda)=\Ber_n(\lambda),\ \ \ \ 
V^{\Delta_0}_{\bfalpha}(\lambda,\ell)=\Ver_n(\lambda,\ell). \]
The other contributions come from $\Delta=\{\mu\}$ with $\mu\in \Xi^\ast=\bZ$. 
We have $\bfalpha_\Delta=\emptyset$, hence 
 \[ \Ber_{(\bfalpha_\Delta,\gamma_\Delta)}(\lambda)=\delta_\mu(\lambda),\ \ 
 \Ver_{(\bfalpha_\Delta,\gamma_\Delta)}(\lambda,\ell)=\ell \delta_{\ell\mu}(\lambda).\] 
Since $\tau_\Delta=\mu-\gamma$, the weight $\alpha=+1$ is polarized if and only if $\mu>0$. With the partition functions from Example \ref{ex:basicexamples1}, this gives 
\[ H_{(\bfalpha^{\mathsf{c}}_\Delta,\tau_\Delta)}(\lambda)=H_n(\lambda),\ \ \ 
P_{(\bfalpha^{\mathsf{c}}_\Delta,\tau_\Delta)}(\lambda)=P_n(\lambda)\]
for $\Delta=\{\mu\}$ with $\mu >0$ while 
\[ H_{(\bfalpha^{\mathsf{c}}_\Delta,\tau_\Delta)}(\lambda)=(-1)^n H_n(-\lambda),\ \ \ 
P_{(\bfalpha^{\mathsf{c}}_\Delta,\tau_\Delta)}(\lambda)=(-1)^n P_n(-\lambda-n)\]
for $\Delta=\{\mu\}$ with $\mu \le 0$.
The resulting decomposition formulas hence read as 
\[ B_n(\lambda)=\on{Ber}_n(\lambda)+\sum_{n=1}^\infty H_n(\lambda-\mu)+(-1)^n \sum_{\mu=-\infty}^0 H_n(\mu-\lambda)\]
and 
\[ V_n(\lambda,\ell)=\Ver_n(\lambda,\ell)+\ell\sum_{\mu=1}^\infty P_n(\lambda-\ell\mu)+(-1)^n \ell \sum_{\mu=-\infty}^0 P_n(\ell\mu-\lambda-n),
\]
respectively.
\end{example}
See also Example \ref{SzenesSU(3)Example} further below, where we describe the decomposition formula for Example \ref{ex:lie2} in the case $G=SU(3)$.

\section{Szenes' Residue Theorem.}\label{subsec:szenes}

In this Section, we will review Szenes' residue theorem for the Verlinde sums \cite{sz:ver} (see also \cite{sz:co}).  The formula is a powerful tool for explicit computations of such sums. We will explain how it implies the quasi-polynomial behaviour, in the form stated in Theorem \ref{th:quasipoly}. 

\subsection{The constant term functional.}
The formula in \cite{sz:ver} is expressed in terms of iterated `constant-term' functionals.  We describe these briefly here; for further background see \cite{br:ar1,sz:it,sz:ver,sz:res1}.  For a meromorphic function $f$ of a single variable $z$, 
defined on a neighborhood of $0 \in \bC$, the constant term 
$\CT_{z=0}(f)$ is defined to be the constant term in the Laurent expansion of $f(z)$ about the point $0$. In terms of residues,
\begin{equation}\label{eq:CT}
\CT_{z=0}(f)=\mathrm{Res}_{z=0}\Big(f(z)\frac{d z}{z}\Big).
\end{equation} 

Let $\Lambda\subset \liet$ be a maximal rank lattice, and 
consider a $\Lambda$-periodic affine hyperplane arrangement $\calA$ in $\liet$. 
For $p\in \liet$ we denote by $\calA_p\subset\calA$ the collection of affine hyperplanes passing through $p$. The point $p$ is called a \emph{vertex} of the arrangement if the intersection of affine hyperplanes in $\calA_p$ is $\{p\}$. Let $\vx(\calA)$ denote the set of all vertices. A subset of $\calA_p$ is called \emph{linearly independent} if the conormal directions to the hyperplanes in this subset are linearly independent.

Let $M_\calA$ be the ring of meromorphic functions on $\liet_\bC$ whose singularities 
are contained in $\bigcup_{H\in\calA} H$.  Suppose $p\in \vx(\calA)$, and  consider an ordered $r$-tuple $\bfH=(H_1,\ldots,H_r)$ of $r=\dim\liet$ linearly independent affine hyperplanes $H_k\in \calA_p$. The \emph{iterated constant term functional} 
\[ \iCT_{\bfH}\colon M_\calA\to  \bC \]
is defined as follows: 
Choose affine-linear functions $\beta_k$ with $H_k=\beta_k^{-1}(0)$; the collection 
$\bfbeta=(\beta_1,\ldots,\beta_r)$ defines an isomorphism $\bfbeta\colon \liet_\bC\to \bC^r$, taking $p$ to $0$.  
Thus $f\circ \bfbeta^{-1}$ is a function of $z_1,\ldots,z_r$. One defines 
\begin{equation}
\label{eq:iCT}\iCT_{\bfH}(f) = \CT_{z_1=0}\ \cdots \CT_{z_r=0}\, (f\circ \bfbeta^{-1}).
\end{equation}
That is, one first computes $\CT_{z_r=0}(f\circ \bfbeta^{-1})$ as a meromorphic function of $z_1,\ldots ,z_{r-1}$, one next computes $\CT_{z_{r-1}=0}$ of the result, and so forth. 
This definition is  independent of the choice of $\beta_k$ defining $H_k$, 
due to the fact that the constant term \eqref{eq:CT} is invariant under coordinate changes of the form $z\leadsto cz$. On the other hand, 
it does in general depend  on the ordering of the hyperplanes. Note that the definition of $\iCT_{\bfH}$ extends to the space $M_{\calA,p}$ of \emph{germs} at $p$ of meromorphic functions with poles in $\calA$. 

\subsection{nbc bases.}\label{subsec:nbc}
Szenes' Theorem \ref{SzenesFormulaTheorem} expresses the Verlinde sum $V_{\bfalpha}$ as a finite sum over iterated residues, for suitable 
$r$-tuples $\bfH$ of affine hyperplanes.  Given $p\in \vx(\calA)$, choose an ordering on the set $\calA_p$. This determines  a unique collection $B_p$ of $r$-tuples $\bfH$ of linearly independent hyperplanes in $\calA_p$, with the following property: For every $H\in \calA_p$, the set 
\[ \{H'\in \bfH\colon H<H'\}\cup H\]
is linearly independent. The collection $B_p$ is known 
in the theory of hyperplane arrangements as an \emph{nbc basis} of $\calA_p$, where nbc stands for \emph{no broken circuit basis} 
 (cf.~  \cite{sz:ver,sz:it}).

\begin{example}
If $\dim(\liet)=2$, the nbc basis $B_p$ for a given ordering of $\calA_p$ is the collection of all 2-tuples $\bfH=(H_0,H)$, where $H_0$ is the unique smallest element of $\calA_p$, and 
$H$ is any of the other elements. 
\end{example}

\begin{remark}
\begin{enumerate}
\item In \cite{br:ar1}, it is shown that $B_p$ corresponds naturally to a vector space basis for the subspace of \emph{simple fractions}, in the ring of rational functions generated by the inverses of the affine-linear functions defining the hyperplanes in $\calA_p$.
\item In Szenes' formula one can also use more general \emph{orthogonal bases}, which need not arise from a linear ordering of $\calA_p$, see \cite{sz:ver}.  For simplicity we have only described nbc bases, which are a special case.
\item For interesting and non-trivial examples, one might consult \cite{bal:mul} where the case of hyperplane arrangements arising from root systems of the classical Lie algebras are discussed in detail, as well as many computations of the associated multiple Bernoulli series.  
\end{enumerate}
\end{remark}

\subsection{Statement of Szenes' theorem.}
For the remainder of this section, we assume that $\Xi\subset \liet$ is a lattice containing $\Lambda$, and that $\ubfalpha=(\bfalpha,\bfu)$ is a list of augmented weights, where 
the weights $\alpha\in\bfalpha$ are all non-zero.  Let $\calA=\calA(\ubfalpha)$ be the resulting affine hyperplane arrangement in $\liet$, consisting of the affine hyperplanes $\alpha^{-1}(s)$ such that $s\in\bQ$ with $(\alpha,e^{2\pi \i s})=(\alpha,u)\in \ubfalpha$. 
Using the procedure from Subsection \ref{subsec:basic} \eqref{it:primitive}, we  will assume that the $\alpha\in\bfalpha$ are \emph{primitive} vectors in the weight lattice. Note that this procedure does not change the arrangement $\calA(\ubfalpha)$. 

 We will also assume $\sp_{\bR} \bfalpha=\liet^\ast$, so that the set $\vx(\calA)$ of vertices is non-empty. For every vertex $p\in \vx(\calA)$, choose an ordering of the set $\calA_p$, and let 
$B_p$ be the resulting nbc basis, as in Subsection \ref{subsec:nbc}.  Recall also the set $\calS=\calS(\bfalpha)$ of admissible affine subspaces $\Delta\subset \liet^\ast$, and that 
a \emph{chamber} is a component of  $\liet^\ast-\bigcup_{\Delta\neq \liet^\ast}\Delta$  (cf. Definition \ref{def:arrangements}).

For any  chamber $\frakc\subset \liet^\ast$, and any $\bfH\in B_p$ and $(\lambda,\ell)\in \Lambda^\ast\times \bN$, define a meromorphic function 
\[ X\mapsto T_{\frakc,\bfH}(\lambda,\ell)(X)\]
of $X\in \liet_\bC$, as follows. Choose affine-linear functions $\beta\colon \liet\to \bR$ defining the 
hyperplanes $H\in \bfH$, in such a way that their linear parts
$\beta^0=\beta-\beta(0)$ lie in $\Xi^\ast\subset \Lambda^\ast$. 
Let $\bfbeta$ be the list of these functions; the list of their 
linear parts forms a basis $\bfbeta^0$ of $\liet^\ast$.   Let $\vol_{\Xi^\ast}(\Box \bfbeta^0)$ be the volume of the zonotope with respect to Lebesgue measure on $\liet^\ast$ (normalized such that  $\liet^\ast/\Xi^\ast$ has volume $1$). Equivalently,
\[ \vol_{\Xi^\ast}(\Box \bfbeta^0)=\# \big(\Xi^\ast \cap (\nu+\Box \bfbeta^0)\big)\] 
for generic $\nu\in\liet^*$. (The intersection on the right hand side does not change as $\nu$ varies 
 in a chamber of $\calS(\bfbeta^0)$; in particular, we may also write it as $\Xi^\ast\cap (\frakc+\Box \bfbeta^0)$.) With these preparations, put
\begin{equation}\label{eq:tcbeta}
 T_{\frakc,\bfH}(\lambda,\ell)(X)=
\frac{1}{ \vol_{\Xi^\ast}(\Box\bfbeta^0) }\sum_\mu e^{2\pi \i \pair{\lambda-\ell\mu}{X}}
\prod_{\beta \in \bfbeta} \frac{2\pi \i \ell \beta(X)}{1-e^{2\pi \i \ell \pair{\beta^0}{X}}};
\end{equation}
here the sum is over all $\mu \in \Xi^\ast\cap (\frakc-\Box \bfbeta^0)$. As shown by Szenes \cite{sz:ver}, this function does not 
depend on the choice of affine-linear functions $\beta$ defining $H$.

Observe that the function \eqref{eq:tcbeta} is holomorphic for $X$ near $p$. 
Indeed, the singularities of $X\mapsto (1-e^{2\pi \i \ell \pair{\beta^0}{X}})^{-1}$ 
are contained in the subset where $\ell \pair{\beta^0}{X}\in\bZ$; but for $X=p$, any such singularity is compensated by the zero of $X\mapsto \beta(X)$ in the enumerator.
Let $f_{\ubfalpha}\in M_\A$ be the meromorphic function defined by 
\begin{equation}\label{eq:falpha}
f_{\ubfalpha}(X)=\prod_{(\alpha,u)\in \ubfalpha} \Big(1-u\, e^{-2\pi \i\pair{\alpha}{X}} \Big)^{-1}.
\end{equation}
We are now in position to state Szenes' theorem for Verlinde sums:
\begin{theorem}[Szenes {\cite[Theorem 4.2]{sz:ver}}]
\label{SzenesFormulaTheorem}
Let $\ubfalpha$ be a list of augmented weights, where all $\alpha\in\bfalpha$ are primitive lattice vectors in  $\Lambda^\ast$, such that 
$\sp_{\bR}\bfalpha=\liet^\ast$. Define $\calA=\calA(\ubfalpha)$ as above.   Then for every 
chamber $\mathfrak{c}$ of $\calS(\bfalpha)$, and all $(\lambda,\ell)\in \Lambda^*\times\bN$ with  $\lambda \in \ell \frakc - \Box \bfalpha$, 
the Verlinde sum  is given by the formula 
\begin{equation}
\label{SzenesFormula}
V_{\ubfalpha}(\lambda,\ell)=\sum_{[p] \in \vx(\calA)/\Lambda} \, \, \, \, \sum_{\bfH \in B_p} 
\iCT_{\bfH}\Big(T_{\frakc,\bfH}(\lambda,\ell) f_{\ubfalpha}\Big).
\end{equation}
Here the summation over $[p]$ uses one representative $p\in \vx(\calA)$ from each equivalence class modulo $\Lambda$. 
\end{theorem}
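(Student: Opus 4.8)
The plan is to follow Szenes and convert the finite sum over $\lieT_\ell$ into a sum of residues localized at the vertices of the arrangement $\calA$. Writing $t=\exp(X)$ with $X\in\frac1\ell\Xi$, the Verlinde sum reads
\[ V_{\ubfalpha}(\lambda,\ell)=\sideset{}{'}\sum_{X\in\frac1\ell\Xi/\Lambda}e^{2\pi\i\pair{\lambda}{X}}\,f_{\ubfalpha}(X),\]
a finite sum of the $\Lambda^\ast$-periodic meromorphic function $f_{\ubfalpha}$, the prime omitting its poles on $\calA$. The basic device is the one-variable identity converting such a finite sum into a residue sum: for a linear form $\beta^0\in\Xi^\ast$ the kernel $(1-e^{2\pi\i\ell\pair{\beta^0}{X}})^{-1}$ has simple poles at exactly the $\ell$-torsion points in the $\beta^0$-direction, with residue $-(2\pi\i\ell)^{-1}$. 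Hence summing a periodic integrand over torsion points equals $-2\pi\i\ell$ times the sum of the residues of the integrand against this kernel.

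The first real step is a total-residue vanishing statement. Since the integrand $e^{2\pi\i\pair{\lambda}{X}}f_{\ubfalpha}(X)$ times the periodic kernel is invariant under a full-rank lattice, a periodicity (residue-theorem) argument on the associated compact complex torus shows that the sum of all its residues over a fundamental domain is zero. The poles split into two classes: the \emph{regular} torsion points, whose residues reproduce the Verlinde sum, and the \emph{vertices} $p\in\vx(\calA)$, where $f_{\ubfalpha}$ itself is singular. Vanishing of the total residue therefore expresses $V_{\ubfalpha}(\lambda,\ell)$ as a sum of the residues at the vertices, thereby localizing the computation to $\vx(\calA)/\Lambda$.

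In dimension $r>1$ one iterates this procedure one coordinate at a time. At a vertex $p$, an $r$-tuple $\bfH\in B_p$ of linearly independent hyperplanes through $p$ fixes an order of the variables, and the iterated constant-term functional $\iCT_{\bfH}$ of \eqref{eq:iCT} is precisely the iterated residue in that order; the numerators $2\pi\i\ell\beta(X)$ in the kernel \eqref{eq:tcbeta} turn each one-variable residue into a constant term and render $T_{\frakc,\bfH}$ holomorphic at $p$. The nbc condition on $B_p$ is exactly what guarantees that the order-dependent individual iterated residues reassemble, upon summing over $\bfH\in B_p$, into the full order-independent local contribution at $p$---equivalently, that the broken-circuit fractions cancel. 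The chamber $\frakc$ and the hypothesis $\lambda\in\ell\frakc-\Box\bfalpha$ fix a single polarization across all vertices, selecting for each factor the geometric expansion $(1-z)^{-1}=\sum_{j\ge0}z^j$ rather than $-z^{-1}\sum_{j\ge0}z^{-j}$; this is the range in which the truncated translate-sum over $\mu\in\Xi^\ast\cap(\frakc-\Box\bfbeta^0)$ in \eqref{eq:tcbeta} is the correct one, while the normalization $\vol_{\Xi^\ast}(\Box\bfbeta^0)$ accounts for the index of $\bZ\bfbeta^0$ in $\Xi^\ast$, i.e.\ the number of torsion points per fundamental cell of the basis.

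I expect the main obstacle to be making the iterated localization and the nbc bookkeeping simultaneously rigorous: one must verify that iterating the one-variable residue conversion produces \emph{exactly} the vertex/nbc-basis sum, with every intermediate pole accounted for and no spurious boundary contributions, and that the ordering-dependence of $\iCT_{\bfH}$ genuinely disappears after summation over the nbc basis. The cleanest route is an induction on $\#\bfalpha$: removing one hyperplane relates $V_{\ubfalpha}$ to $V_{\ubfalpha\sm\ul\beta}$ through the difference equation of Proposition \ref{prop:differenceequation}, and one checks that the right-hand side of \eqref{SzenesFormula} obeys the same recursion, the broken-circuit cancellations being exactly what is needed to match the lower-dimensional correction term $\mathsf{e}_{t_0}\,\delta_{p\bN}\,\pi^\ast V_{\ubfalpha'}$. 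A base case in dimension one, where the single constant term directly evaluates the finite geometric sum, completes the induction.
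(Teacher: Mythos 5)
The paper does not prove this theorem: it is quoted verbatim from Szenes \cite[Theorem 4.2]{sz:ver}, and Section \ref{subsec:szenes} only \emph{uses} it (to deduce the quasi-polynomiality of Theorem \ref{th:quasipoly}). So there is no in-paper proof to compare yours against; what you have written is a sketch of Szenes' own iterated-residue strategy. As such a sketch it identifies the right ingredients --- the conversion of the sum over $\lieT_\ell$ into residues against the kernel $(1-e^{2\pi\i\ell\pair{\beta^0}{X}})^{-1}$, a total-residue vanishing statement, localization at $\vx(\calA)$, and the role of the chamber condition in killing boundary contributions --- but it is a plan rather than a proof, and the points you yourself flag as "the main obstacle" are exactly where the real work lies.

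Two gaps deserve to be named concretely. First, your dichotomy of poles into "regular torsion points" and "vertices" is too coarse for $r>1$: a torsion point of $\tfrac1\ell\Xi/\Lambda$ excluded from the primed sum may lie on some hyperplanes of $\calA$ without being a vertex, and the intermediate one-variable residue steps produce contributions along all strata of the arrangement, not just at $\vx(\calA)$. Showing that everything ultimately reassembles into contributions indexed by vertices and nbc $r$-tuples is precisely the content of Szenes' theorem and of the Brion--Vergne theory of simple fractions \cite{br:ar1}; asserting that "the broken-circuit fractions cancel" does not establish it. Second, the individual functionals $\iCT_{\bfH}$ depend on the chosen ordering of $\calA_p$, and the claim that the sum over $\bfH\in B_p$ is the "order-independent local contribution" is itself a theorem about orthogonal bases of the space of simple fractions that must be proved, not invoked. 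Your closing suggestion --- induction on $\#\bfalpha$ via the difference equation of Proposition \ref{prop:differenceequation}, checking that the right-hand side of \eqref{SzenesFormula} satisfies the same recursion --- is a genuinely workable alternative route (it mirrors how the present paper proves its decomposition formula, Theorem \ref{th:decomposition}), but you would still need to verify the recursion for the residue expression at each vertex, match the lower-dimensional correction term, and supply the base case and the dependence of $T_{\frakc,\bfH}$ on $\frakc$ across the walls; none of this is carried out.
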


\begin{example} In Example \ref{ex:basicexamples}, 
let $\liet=\liet^\ast=\bR$ with pairing given by multiplication, and let 
$\Lambda=\Xi=\bZ$. Let $\bfalpha=\{1,\ldots,1\}$ be the weight $1\in \Lambda^\ast=\bZ$, repeated $n$ times, defining the Verlinde sum $V_{\bfalpha}=V_n$. Thus $f_{\bfalpha}(X)=\big(1-e^{-2\pi \i X}\big)^{-n}$, and 
$\Box\bfalpha=(0,n)$. The affine hyperplane arrangement $\calA$ is the set $\bZ$ of integers, and has a single vertex (up to translation) $p=0$. The basis $B_p$ consists of just one element $\bfH$, which is given by the single hyperplane $H=\{0\}$.  For the chamber $\mathfrak{c}=(0,1)$, we obtain
\[ T_{\frakc,\bfH}(\lambda,\ell)(X)=e^{2\pi \i \lambda X} \frac{2\pi \i \ell X}{1-e^{2\pi \i \ell X}}.\]
Writing $z=2\pi \i X$, Szenes' formula hence shows that $V_n(\lambda,\ell)=\on{Ver}_n(\lambda,\ell)$ for all 
 $n\ge 1$ and all $\lambda,\ell$ such that $\lambda\in (-n,\ell)$, with 
\begin{equation}\label{eq:basicszenes}
\on{Ver}_n(\lambda,\ell)=
\CT_{z=0}\Big(e^{\lambda z} \frac{\ell z}{1-e^{\ell z}}\ \ \frac{1}{(1-e^{-z})^n}\Big).
\end{equation}
See \cite[Theorem 4]{ve:res} for a direct proof of this formula, which  may be reformulated as the generating function \eqref{eq:vergenfunction} stated in the introduction. 
%
%
In a similar way, Szenes' formula for Bernoulli series reduces to the fact that 
$B_n(\lambda)$ agrees with $\on{Ber}_n(\lambda)$ for $0<\lambda<1$, where $\on{Ber}_n(\lambda)$ is given by the generating series \eqref{eq:bergenfunction}. 
%
\end{example}

A more elaborate example will be given in Section \ref{sec:example} below.

\subsection{Relation between Bernoulli series and Verlinde sums}
\label{subsec:BernoulliAndVerlinde}
We briefly describe a Khovanskii-Pukhlikov-type formula, also due to Szenes (\cite[Proposition 4.9]{sz:res} with somewhat different notation), relating Verlinde sums to Bernoulli series.  See also \cite[Theorem 5]{ve:res} for discussion of the 1-dimensional case. 

We first need the analogue of Bernoulli series centred at vertices $p \ne 0$.  Let $p \in \Lambda \otimes \bQ \subset \t$ and $\bfalpha=(\alpha_1,...,\alpha_n)$ a list of weights.  Let $\widehat{\bfalpha}=(\widehat{\alpha}_1,...,\widehat{\alpha}_n)$ be the list of affine linear functions vanishing at $p$, with linear parts given by $\bfalpha$.  Define the Bernoulli-like sum
\[ B_{\widehat{\bfalpha}}(\lambda,\ell)=\sideset{}{'}\sum_{\xi \in \tfrac{1}{\ell}\Xi} \frac{e^{2\pi \i \pair{\lambda}{\xi}}}{\prod_{\widehat{\alpha}\in \widehat{\bfalpha}} 2\pi \i \widehat{\alpha}(\xi)},\]
where the prime next to the summation means to omit terms such that the denominator vanishes.  If $\ell=1$ and $p \in \Xi$, then this is related to the Bernoulli series $B_{\bfalpha}$ by a multiplicative factor.  Szenes \cite{sz:it,sz:res} gave a version of Theorem \ref{SzenesFormulaTheorem} for the Bernoulli series $B_{\widehat{\bfalpha}}$; the result has the same form, but with $f_{\ubfalpha}(X)$  replaced by 
$\prod_{\alpha \in \widehat{\bfalpha}} (2\pi \i \widehat{\alpha}(X))^{-1}$. 

For $\alpha \in \t^\ast$, let $\alpha(\partial)=\alpha(\partial_\lambda) \colon \Pol(\t^\ast) \rightarrow \Pol(\t^\ast)$ denote the directional derivative in the direction $\alpha$, a first-order linear differential operator acting on the space of polynomial functions in $\t^\ast$.  The assignment $\alpha \mapsto \alpha(\partial)$ extends multiplicatively to a map from the completion of the symmetric algebra $\Sym(\t^\ast)$ to infinite-order differential operators, acting on $\Pol(\t^\ast)$.  In other words, to each formal power series $\P(X)$ in $X \in \t$, we assign an infinite-order differential operator $\P(\partial)$.  More generally, if $\P(X)$ is the Taylor expansion of a function at $p \in \t$, we can define an infinite-order differential operator $\P(\partial)$, acting on the $\Pol(\t^\ast)$-module $e^{\pair{\lambda}{p}}\cdot \Pol(\t^\ast)$.  The equation
\[ \alpha(\partial_\lambda-p)e^{\pair{\lambda}{p}}g=e^{\pair{\lambda}{p}}\alpha(\partial_\lambda) g, \qquad \alpha \in \t^\ast,\]
implies that only finitely many terms of the infinite series $\P(\partial)$ act non-trivially on any fixed element of this module.

Let $\ubfalpha$ be a list of augmented weights.  Choose representatives $p \in \Lambda \otimes \bQ$ for $\vx(\A)/\Lambda$.  Let $\ubfalpha_p$ be the sublist of $\ubfalpha$ consisting of the $\ualpha =(\alpha,u)$ such that $ue^{-2\pi \i \pair{\alpha}{p}}=1$.  For each $\ualpha=(\alpha,u)$ in $\ubfalpha_p$, let $\widehat{\alpha}$ be the unique affine linear function vanishing at $p$ and with linear part $\alpha$.  Denote the corresponding list of affine linear functions by $\widehat{\bfalpha}_p$.

Let $\Td_{\ubfalpha,p}(X)$ denote the Taylor series at $p$ of
\[ \prod_{\ualpha \in \ubfalpha_p} \frac{2\pi \i \widehat{\alpha}(X)}{1-e^{-2\pi \i \widehat{\alpha}(X)}}.\]
The function  
\[ f_{\ubfalpha_p^{\mathsf{c}}}(X)=\prod_{(\alpha,u) \in \ubfalpha_p^{\mathsf{c}}} \big(1-ue^{-2\pi \i \pair{\alpha}{X}}\big)^{-1} \]
is holomorphic on a neighborhood of $p$, and we will use the same symbol $f_{\ubfalpha_p^{\mathsf{c}}}(X)$ to denote its Taylor series at this point.  By differentiating inside the constant term of the residue formula for Bernoulli series and comparing with the residue formula for Verlinde sums, one has 
\begin{equation} 
\label{eqn:TdOpGeneral}
\Ver_{(\ubfalpha;\gamma)}(\lambda,\ell)=\sum_{p \in \vx(\A)/\Lambda}\Td_{\ubfalpha,p}\big((2\pi \i)^{-1}\partial \big)f_{\ubfalpha_p^{\mathsf{c}}}\big((2\pi \i)^{-1}\partial \big)\Ber_{(\widehat{\bfalpha}_p;\gamma)}(\lambda,\ell).
\end{equation}
In case there is only a single vertex at $p=0$, equation \eqref{eqn:TdOpGeneral} simplifies:
\begin{equation} 
\label{eqn:TdOp}
\ell^{-n}\Ver_{(\bfalpha;\gamma)}(\lambda,\ell)=\Big(\Td_{\bfalpha}\big((2\pi \i \ell)^{-1}\partial \big)\Ber_{(\bfalpha;\gamma)}\Big)(\lambda/\ell), 
\end{equation}
where $n$ is number of elements in the list $\bfalpha$.  Equation \eqref{eqn:ClassLim} follows from \eqref{eqn:TdOp}.  As a simple example of \eqref{eqn:TdOp}, consider $\Ber_2(\lambda)$, $\Ver_2(\lambda,\ell)$ introduced in Example \ref{ex:basicexamples}.  One has
\[ \Big(1+\frac{1}{2\ell}\frac{\partial}{\partial \lambda}+\frac{1}{12\ell^2}\frac{\partial^2}{\partial \lambda^2}\Big)^2\Ber_2(\lambda)=-\frac{1}{2}\lambda^2+\frac{1}{2}\lambda-\frac{1}{12}-\frac{\lambda}{\ell}+\frac{1}{2\ell}-\frac{5}{12\ell^2}.\]
Making the substitution $\lambda \leadsto \lambda/\ell$, and then multiplying by $\ell^2$, one arrives at the formula for $\Ver_2(\lambda,\ell)$.

\subsection{Proof of Theorem \ref{th:quasipoly}}
\bigskip

\noindent We now explain how to deduce the quasi-polynomial behavior (Theorem \ref{th:quasipoly}) from Szenes' Theorem \ref{SzenesFormulaTheorem}.  Using 
Proposition \ref{prop:lowerdimensional}, and since $\#(\lieT_\ell\cap \lieT_{\Delta_0})$ is a polynomial in $\ell$, we may assume that $\sp_{\bR}\bfalpha=\liet^*$. 
Furthermore, by 
\ref{subsec:basic} \eqref{it:primitive}, we may assume that the linear parts $\alpha$ are primitive lattice vectors in $\Lambda^\ast$. We will show that each of the terms 
\begin{equation}\label{eq:szenes}
\iCT_{\bfH}\big(T_{\frakc,\bfH}(\lambda,\ell)f_{\ubfalpha}\big)
\end{equation} 
with $T_{\frakc,\bfH}(\lambda,\ell)(X)$ 
given by \eqref{eq:tcbeta} and $f_{\ubfalpha}(X)$ given by \eqref{eq:falpha}, is quasi-polynomial in $\lambda,\ell$. Write $\bfbeta=(\beta_1,\ldots,\beta_r)$. We have to compute the iterated constant term with respect to the coordinates $z_k=2\pi \i \beta_k(X)$. We have that 
\[ e^{2\pi \i \pair{\beta_k^0}{X}}=v_k^{-1} e^{z_k},\]
with the phase factor $v_k=e^{2\pi \i \beta_k(0)}$. 
The term 
\[ 
\prod_{\beta \in \bfbeta} \frac{2\pi \i \ell \beta(X)}{1-e^{2\pi \i \ell \pair{\beta^0}{X}}}
=\prod_{k=1}^r \frac{\ell z_k}{1-v_k^{-\ell} e^{\ell z_k}}
\]
in \eqref{eq:szenes}
is holomorphic in $z_1,\ldots,z_r$, and the coefficients of its power series expansion in the $z_k$ are quasi-polynomial functions of $\ell$. (Here we are using that the map $\ell\mapsto v_k^\ell$ is periodic in $\ell$, since $\beta_k(0)$ are rational.) Next, letting 
$\lambda_k$ be the components of $\lambda$ with respect to the basis $\beta^0_k$, 
and similarly for $\mu$, we see that
\[ e^{2\pi \i \pair{\lambda-\ell\mu}{X}}=(e^{-2\pi \i \pair{\mu}{p}})^\ell\ t_p^{\lambda}
\prod_{k=1}^r e^{(\lambda_k-\ell\mu_k)z_k},\]
with  $t_p=\exp(p)\in \lieT$. 
The phase factor $(e^{-2\pi \i \pair{\mu}{p}})^\ell\ t_p^{\lambda}$, 
is quasi-polynomial in $\lambda,\ell$, while $\prod_{k=1}^r e^{(\lambda_k-\ell\mu_k)z_k}$ 
is a holomorphic function of $z_1,\ldots,z_r$ whose power series coefficients are polynomial 
in $\lambda,\ell$. In summary, $T_{\mathfrak{c},\bfH}(\lambda,\ell)\circ \bfbeta^{-1}$, 
is a meromorphic function of $z_1,\ldots,z_r$ whose Laurent series coefficients are quasipolynomial 
in $\lambda,\ell$. Finally, $f_{\ubfalpha}\circ \bfbeta^{-1}$ is a meromorphic function of the $z_1,\ldots,z_r$ 
that does not depend on $\lambda,\ell$. Hence, \eqref{eq:szenes} is a quasi-polynomial function of $\lambda,\ell$.

\begin{remark}
If $\ubfalpha=(\bfalpha,\bfu)$ is such that all $u=1$, and $\bfalpha$ is unimodular (i.e., any vector space basis consisting of elements of $\bfalpha$ is also a basis of $\Lambda^*$), then there is only the zero vertex $p=0$, up to $\Lambda$-translation. Thus all $v$'s are equal to $1$ as well, and the argument above shows that $V_{\bfalpha}$ is not only quasi-polynomial on the region \eqref{eq:region}, but 
is in fact as a \emph{polynomial} on that region.
\end{remark}

\section{Example: Verlinde sum associated to $G=\SU(3)$}\label{sec:example}
\label{SzenesSU(3)Example}
In this Section, we work out the terms of the decomposition formula for the case of $G=\mathrm{SU}(3)$, continuing  Example \ref{ex:lie2}. For a wealth of examples of this type in the case of Bernoulli series, see \cite{bal:mul};  
for calculations of generalized Heaviside functions and partition functions associated to root systems, see \cite{bal:vol}. 

We begin with a general compact, simple, simply connected Lie group $G$. We fix the standard notation: $\lieT$ is a maximal torus, $W$ the corresponding Weyl group, $\Lambda\subset \liet$ is the integral (i.e., root) lattice, $\Xi=\Lambda^*$ is the weight lattice (using the basic inner product to identify $\liet$ and $\liet^\ast$). We denote by $\liet_+$ the closed positive Weyl chamber, corresponding to the set $\mf{R}_+$ of positive roots, and let $\liet_-=-\liet_+$ and $\mf{R}_-=-\mf{R}_+$. We are interested in the Verlinde sum $V_{\bfalpha}$ for $\bfalpha=\mf{R}_-$. 

Let $Z\subset T$ be the center of $G$. The function $t\mapsto t^\alpha$ defined by roots $\alpha$ descends to $T/Z$; hence the Verlinde sum can be written as a sum  
\[ V_{\bfalpha}(\lambda,\ell)=\Big(\sum_{\zeta\in Z}\zeta^\lambda\Big)
\sum_{[t]\in T_\ell/Z} \frac{t^\lambda}{\prod_{\bfalpha}1-t^{-\alpha}}.
\]
using a representative $t$ for each equivalence class $[t]$. The sum 
$\sum_{\zeta\in Z}\zeta^\lambda$ vanishes unless $\lambda$ is in the 
root lattice, in which case it is equal to $\# Z$. In particular, we see that 
$V_{\bfalpha}(\cdot,\ell)$ is supported on the root lattice. if $G$ is simply laced, then the root lattice is equal to $\Lambda$, and the prefactor is 
$\# Z\ \delta_\Lambda(\lambda)$. 

We now specialize to the case $G=\SU(3)$. Here $\Lambda$ has a basis $\alpha_1,\alpha_2$ given by the simple roots, while $\Lambda^*$ has the dual basis $\varpi_1,\varpi_2$ given by the fundamental weights.
It will be convenient to denote by $\alpha_3=\alpha_1+\alpha_2$ the highest root, so that $\bfalpha=\{-\alpha_1,-\alpha_2,-\alpha_3\}$.

The collection $\calS$ of admissible subspaces consists of subspaces spanned by subsets of roots,  together with their translates under the integral (i.e., root) lattice $\Xi^*=\Lambda$. Take the open chamber $\mathfrak{c}$  to be the the interior of the triangle spanned by $0,\alpha_2,\alpha_3$. An element 
\[ \gamma\in\mf{c}\] satisfies the genericity assumption from Section \ref{sec:decomp} if and only if it lies in the interior of an alcove from the Stiefel diagram;  for convenience, we will will take it to be in the interior of the fundamental Weyl alcove (the triangle spanned by $0,\varpi_1,\varpi_2$).

The decomposition formula 
is a sum $V_{\bfalpha}=\sum_\Delta V_{\bfalpha}^\Delta$ over admissible subspaces, where each $V_{\bfalpha}^\Delta$ 
is a convolution of a polynomial $\Ver_{(\alpha_\Delta,\gamma_\Delta)}$ and a partition function. 
We will discuss these terms separately, according to  the dimension of $\Delta$.

\subsection{Contribution from $\Delta=\liet^*$}
The leading term in the decomposition formula for $V_{\bfalpha}$ is the polynomial expression  $\Ver_{(\bfalpha;\gamma)}$ associated to the admissible subspace $\Delta=\t^\ast$.  
To compute $ \Ver_{(\bfalpha;\gamma)}$, we will use the Szenes formula.

The affine hyperplane arrangement $\calA$ is given by the set of affine root hyperplanes (the Stiefel diagram). Up to $\Lambda$-translation, it has three vertices given by the vertices $p=0,\varpi_1, \varpi_2$ of the Weyl alcove. 

Let $p \in \{0,\varpi_1,\varpi_2 \}$.  Using the ordering of root hyperplanes $H_{\alpha_1}<H_{\alpha_2}<H_{\alpha_3}$, we obtain an nbc basis 
$B_p=\{\bfH_{p,1},\bfH_{p,2}\}$ where $\bfH_{p,1},\bfH_{p,2}$ are defined by the linear functionals
\[ \bfbeta_{p,1}=\{\alpha_1-c_1,\alpha_2-c_2\}, \hspace{1cm} \bfbeta_{p,2}=\{\alpha_1-c_1,\alpha_3-c_3\},\]
respectively, with $c_k=\l\alpha_k,p\r \in \{0,1 \}$.  Explicitly $(c_1,c_2,c_3)=(0,0,0)$ for $p=0$, $(1,0,1)$ for $p=\varpi_1$, and $(0,1,1)$ for $p=\varpi_2$.  Note that $\Xi^\ast=\Lambda$, and 
\[ (\mathfrak{c}-\Box \bfbeta_{p,1}^{0})\cap \Xi^\ast=\{0\},\ \ 
(\mathfrak{c}-\Box \bfbeta_{p,2}^{0})\cap \Xi^\ast=\{-\alpha_1\};\]
in particular $\vol_{\Xi^\ast}(\Box \bfbeta^0_{p,k})=1$ for $k=1,2$. The Szenes formula for $V_{\bfalpha}$ reads as 
\[ V_{\bfalpha}=\sum_p \iCT_{\bfH_{p,1}}(T_{p,1} f_{\bfalpha})+\iCT_{\bfH_{p,2}}(T_{p,2} f_{\bfalpha})
\]
where 
\[ T_{p,1}(X)=\frac{e^{2\pi \i \pair{\lambda}{X}}(2\pi \i \ell)^2 (\pair{\alpha_1}{X}-c_1)(\pair{\alpha_2}{X}-c_2)}{ (1-e^{2\pi \i \ell \pair{\alpha_1}{X}})(1-e^{2\pi \i \ell \pair{\alpha_2}{X}});
}\]
\[ T_{p,2}(X)=\frac{e^{2\pi \i \pair{\ell \alpha_1+\lambda}{X}}(2\pi \i \ell)^2 (\pair{\alpha_1}{X}-c_1)(\pair{\alpha_3}{X}-c_3)}{ (1-e^{2\pi \i \ell \pair{\alpha_1}{X}})(1-e^{2\pi \i \ell \pair{\alpha_3}{X}});
}\] 

To compute the iterated residue, introduce variables $z_k=2\pi \i (\pair{\alpha_k}{X}-c_k)$, let $t_p=\exp(p) \in Z$, and write $\lambda=\mu_1\varpi_1+\mu_2\varpi_2$.  Using $\varpi_1=\tfrac{2}{3}\alpha_1+\tfrac{1}{3}\alpha_2$, $\varpi_2=\tfrac{1}{3}\alpha_1+\tfrac{2}{3}\alpha_2$ the first term reads as 
\[\iCT_{\bfH_{p,1}}(T_{p,1} f_{\bfalpha})= \CT_{z_1=0}\CT_{z_2=0}\left( 
\frac{t_p^\lambda e^{(\frac{2}{3}\mu_1+\frac{1}{3}\mu_2)z_1+(\frac{1}{3}\mu_1+\frac{2}{3}\mu_2)z_2} \,\ell^2 \,z_1 z_2}
{(1-e^{z_1})(1-e^{z_2})(1-e^{z_1+z_2})(1-e^{\ell z_1})(1-e^{\ell z_2})} 
\right).\] 
For the second term, one uses the variable $z_3=z_1+z_2$ in place of 
$z_2$, and finds 
\[\iCT_{\bfH_{p,2}}(T_{p,2} f_{\bfalpha})= \CT_{z_1=0}\CT_{z_3=0}\left( 
\frac{t_p^\lambda e^{(\frac{1}{3}\mu_1-\frac{1}{3}\mu_2)z_1+(\frac{1}{3}\mu_1+\frac{2}{3}\mu_2)z_3} e^{\ell z_1}\,\ell^2 \,z_1 z_3}
{(1-e^{z_1})(1-e^{z_3-z_1})(1-e^{z_3})(1-e^{\ell z_1})(1-e^{\ell z_3})} 
\right).\] 
Note that the terms for $p=0,\varpi_1,\varpi_2$ are the same except for the factor $t_p^\lambda$.  These elements $t_p$ are just the elements of the center 
$Z$ of $\SU(3)$, hence  $\sum_p t_p^\lambda=3\delta_\Lambda(\lambda)$. 
Carrying out the calculations of the constant terms with the help of \emph{Wolfram Alpha}, we arrive at the following expression:
\begin{align*}
\Ver_{(\bfalpha;\gamma)}(\lambda,\ell)&=\hh(1-\mu_1)\Big(\ell^2-(\mu_1+2\mu_2-3)\ell+(\mu_2-1)(\mu_1+\mu_2-2)\Big)\ \delta_\Lambda(\lambda).
\end{align*} 
This may also be written, 
\begin{equation}\label{eq:alternative}
 \Ver_{(\bfalpha;\gamma)}(\lambda,\ell)=-\hh (\mu_1-1)(\mu_2-\ell-1)(\mu_1+\mu_2-\ell-2)
\delta_\Lambda(\lambda)
\end{equation}
One may verify (as we did) that a different choice of nbc basis gives the same result.  

By the general theory, 
$V_{\bfalpha}(\lambda,\ell)$ coincides with $\Ver_{(\bfalpha;\gamma)}(\lambda,\ell)$
for all $(\lambda,\ell)\in\Lambda^*\times \bN$ with $\lambda$ in the region  $\ell \frakc-\Box \bfalpha=\ell \frakc+\Box \calR_+$. A simple calculation shows that this region is the interior of the hexagon spanned by the orbit of $0$ under the shifted Weyl group action 
\begin{equation}\label{eq:shiftweyl}
		w\bullet \lambda=w(\lambda-\tau)+\tau,\end{equation} 
where 
$\tau=\ell\varpi_2+\rho$ is the barycenter of the hexagonal region. 
One can check that the polynomial  is anti-invariant under this action: 
\[ \Ver_{(\bfalpha;\gamma)}(w\bullet \lambda,\ell)=(-1)^{\on{length}(w)}\Ver_{(\bfalpha;\gamma)}(\lambda,\ell).\]


\begin{remark}
Making the replacement $\mu_i \leadsto \ell \mu_i$ in \eqref{eq:alternative}, dividing by $\ell^3$ and taking the limit as $\ell \rightarrow \infty$, the polynomial in the expression above becomes
\[ -\tfrac{1}{2}\mu_1(\mu_2-1)(\mu_1+\mu_2-1).\]
This is $3$ times the polynomial (for the same chamber $\mathfrak{c}$) for the Bernoulli series $B_{\bf\alpha}(\lambda)$, as computed by Baldoni-Boysal-Vergne \cite[Equation (2.5.2)]{bal:mul}.  The factor of $3$ comes from the $\delta_\Lambda$ factor, and since
$\#(\Lambda^*/\Lambda)=\#Z=3$. 
\end{remark}

\subsection{Contributions with $\dim\Delta=1$. }
The $1$-dimensional admissible subspaces are of the form $\Delta=\xi+\bR \alpha$ where $\alpha \in \bfalpha$ and $\xi \in \Xi^\ast=\Lambda$.  The corresponding contribution  $V_{\bfalpha}^\Delta$ is a convolution 
of $\Ver_{(\bfalpha_\Delta;\gamma_\Delta)}$, where $\bfalpha_\Delta=\{\alpha\}$, 
with the partition function $P_{(\bfalpha_\Delta^{\mathsf{c}};\tau_\Delta)}$, where $\bfalpha_\Delta^{\mathsf{c}}$ are the two remaining roots
in $\bfalpha$. For the calculation below, we will find it convenient to work with the basis $\alpha_1,\alpha_2$ of $\liet$, 
so we will write $\lambda=\lambda_1\alpha_1+\lambda_2\alpha_2$ and only 
later switch to the basis $\varpi_1,\varpi_2$. 

We compute the contribution from $\Delta=\bR \alpha_1$; thus $\bfalpha_\Delta=\{-\alpha_1 \}$.  The chambers of $\Delta$ are the intervals 
$(k,k+1)\alpha_1$  for $k\in\bZ$; here the chamber of 
$\gamma_\Delta$ is the interval for $k=0$, while the polarizing vector $\tau_\Delta$ is a positive multiple of $-\varpi_2$.

By Proposition \ref{prop:lowerdimensional}, the restriction of $V_{\bfalpha_\Delta}$ to $\Delta$ is $\#(\lieT_\ell\cap \lieT_{\Delta})$ times a lower-dimensional Verlinde sum. In our case, $\#(\lieT_\ell\cap \lieT_{\Delta})=3\ell$, while the lower-dimensional Verlinde sum is
\[ \lambda\mapsto \sideset{}{'}\sum_{\zeta^\ell=1}\frac{\zeta^{\lambda_1}}{1-\zeta}.\]
This is nearly the same as $V_1$ defined in Section \ref{SubsectionDefandQuasi}, and one finds similarly that it is 
supported on $\Delta\cap\Lambda\subset \Delta\cap \Lambda^*$, 
and equals the polynomial $\lambda\mapsto \lambda_1-\tfrac{1}{2}(\ell+1)$
for $\lambda_1 \in (0,\ell+1)\cap \bZ$.  It follows that 
\[ \Ver_{(\bfalpha_\Delta;\gamma_\Delta)}(\lambda,\ell)=3\ell \Big(\lambda_1-\f{1}{2}(\ell+1)\Big)\delta_{\bZ}(\lambda_1)
\delta_0(\lambda_2).\]
As for  the partition function $P_{(\bfalpha_\Delta^{\mathsf{c}};\tau_\Delta)}$, note that the list 
$\bfalpha^{\mathsf{c}}_\Delta=\{-\alpha_2,-\alpha_3\}$ is polarized already.  
Hence, the partition function has support in the intersection of $\Lambda$ with the cone generated by $-\alpha_2,-\alpha_3$ (without shifts). 
In terms of the coordinates  $\lambda_1,\lambda_2$ it is given by
\[ P_{(\bfalpha_\Delta^{\mathsf{c}};\tau_\Delta)}(\lambda)=\begin{cases} 1 & \mbox{ if }\lambda_2\le \lambda_1 \le 0,\  \lambda_i \in \bZ, \\ 0 & \text{ otherwise.} \end{cases} \]
Taking the convolution, one obtains
\begin{equation}\label{eq:oldbasis}    V_{\bfalpha}^{\R\alpha_1}(\lambda,\ell)   =\tfrac{3}{2}\ell (1-\lambda_2)\big(2\lambda_1-\lambda_2-\ell-1\big)\,H(-\lambda_2)\,\delta_\Lambda(\lambda),\end{equation}
where $H$ is the Heaviside function supported on $[0,\infty)$. Finally,  switching to the coordinates $\mu_1,\mu_2$ defined by $\lambda=\mu_1\varpi_1+\mu_2\varpi_2$ we arrive at the following formula for the contribution from $\Delta=\bR \alpha_1$:
\begin{equation}
\label{eqn:Delta1}
V_{\bfalpha}^{\R\alpha_1}(\lambda,\ell)
=\frac{\ell(3-\mu_1-2\mu_2)(\mu_1-\ell-1)}{2}
H\big(-\f{\mu_1+2\mu_2}{3}\big) \, \delta_\Lambda(\lambda) .
\end{equation}
The contribution $V_{\bfalpha}^{\Delta}$ of general affine subspaces $\Delta$, is obtained from $V_{\bfalpha}^{\R\alpha_1}$ as follows: Let 
$\ca{L}\colon \liet\to \liet$ be the unique orientation preserving affine-linear transformation, taking 
$\R\alpha_1$ to $\Delta$, taking $-\varpi_2$ to a positive multiple of 
$\tau_\Delta$, and taking $(0,1)\alpha_1$ to the chamber of $\Delta$ containing $\gamma_\Delta$. For each $\ell$, let $\ca{L}_\ell(\lambda)=\ell\ca{L}(\lambda/\ell)$ be the corresponding transformation at level $\ell$. Let $\sigma_\Delta$ be the shift vector (cf.~ \eqref{eq:sigmadelta}), given as minus the sum of roots in $\bfalpha_\Delta^{\mathsf{c}}$ that are 
not polarized, and let  $\pm 1$ be the determined by the number of sign changes. (It is easy to see that one gets $-1$ if $\Delta$ is parallel to $\alpha_3$, equal to $+1$ otherwise.). 
\[ V_{\bfalpha}^{\Delta}(\lambda,\ell)=\pm 
V_{\bfalpha}^{\R\alpha_1}(\ca{L}_\ell^{-1}(\lambda-\sigma_\Delta),\ell).\]



\subsection{Contributions with $\dim\Delta=0$}
The $0$-dimensional admissible subspaces are given by $\Delta=\{\xi\}$ for 
$\xi \in \Xi^\ast=\Lambda$.  Consider the contribution of a given $\xi$, 
Recall that $\gamma_\Delta=\xi$ and $\tau_\Delta=\xi-\gamma$. 
The term $V^{\{\xi\}}(\cdot,\ell)$ is the convolution of $\delta_{\ell\xi}$ 
with a partition function $P_{\bfalpha_\Delta,\tau_\Delta}$. Convolution with 
$\delta_{\ell\xi}$ amounts to shifting the argument $\lambda$ by $\ell\xi$, but the partition function depends on the polarization. 

Since we took $\gamma$ to be in the interior of the Weyl alcove, adding $-\gamma$ to $\xi$ shifts it into the interior of one of the Weyl chambers. 

Let us now first consider the case $\{\xi\}=0$. Then $\tau_\Delta=-\gamma$, so that $\bfalpha=\mf{R}_-$ is already polarized. 
Let $\Lambda^*\to \Z,\ \lambda\mapsto \mf{P}(\lambda)$ be the Kostant partition function, given as  
the number of ways of writing $\mu$ as a linear combination of elements of $\mf{R}_+$ with  non-negative coefficients. Writing $\lambda=\mu_1\varpi_1+\mu_2\varpi_2$, one has that 
\[ \mf{P}(\mu)=\begin{cases}
\f{1}{3}(\mu_2-\mu_1)\ \delta_\Lambda(\lambda) & \mu_2\ge \mu_1\ge -\hh \mu_2\\
\f{1}{3}(\mu_1-\mu_2)\ \delta_\Lambda(\lambda) & \mu_1\ge \mu_2\ge -\hh \mu_1\\
0 & \mbox{ otherwise}
\end{cases}
\]
Then 
\[ V^{\{0\}}=P_{\bfalpha_{\{0\}},\tau_{\{0\}}}(\lambda)=\mf{P}(-\lambda).\]
More generally, for $\xi\in \Lambda\cap\t_-$, we obtain a shifted version, 
\[ V^{\{\xi\}}=\mf{P}(\ell\xi-\lambda).\]

For more general $\xi\in\Lambda^*$, we have that   $\tau_\Delta\in w\t_-$ for a unique $w\in W$, also characterized as the shortest Weyl group element with $w^{-1}\xi\in\t_-$. Equivalently, $w$ is the unique element such that
 $\l\alpha,w^{-1} \tau_\Delta\r>0$ for all negative roots. We hence see that 
 \[ \bfalpha^{\on{pol}}=
 w\mf{R}_- ,\ \ \bfalpha_+ =\mf{R}_-\cap w\mf{R}_-,\ \ \bfalpha_- =\mf{R}_-\cap w\mf{R}_+.\]
As is well-known, the set $\mf{R}_-\cap w\mf{R}_+$ has cardinality $l(w)$, and 
 \[ \sigma:=-\sum_{\alpha\in\bfalpha_-}\alpha=\rho-w\rho.\] 

 We hence obtain 
 \[ P_{\bfalpha,\tau_\Delta}(\lambda)=(-1)^{\on{length}(w)}  
 P_{\bfalpha^{\on{pol}},\tau_\Delta}(\lambda-\rho+w\rho).\]

Observe that $\mf{P}(-\mu)$ is the 
number of ways of writing $\mu$ as a linear combination of elements of 
$\mf{R}_-$ with  non-negative coefficients, and is thus also the number of ways of writing 
$w\mu$ as a linear combination of elements of 
$w\mf{R}_-=\bfalpha^{\on{pol}}$ with  non-negative coefficients. That is, 
$\mf{P}(-\mu)=P_{\bfalpha^{\on{pol}},\tau_\Delta}(w\mu)$. This shows 
\[ P_{\bfalpha,\tau_\Delta}(\lambda)=(-1)^{l(w)}  
 \mf{P}(-w^{-1}(\lambda-\rho)-\rho).\]
Finally, convolution with $\delta_{\ell\xi}$ amounts to replacing $\lambda$ with $\lambda-\ell\xi$. We conclude that the contribution from $\Delta=\{\xi\}$, 
with $\xi\in w\t_-$, is 
\[  V^\Delta_{\bfalpha}(\lambda,\ell)= (-1)^{l(w)}  
 \mf{P}(w^{-1}(\ell\xi-\lambda+\rho)-\rho).\]

\section{Equivariant Bernoulli series and Verlinde sums.}
In this section we define Verlinde sums and partition functions with additional equivariant parameters. The decomposition formula for these sums is relevant for applications to localization formulas in differential geometry, with the parameters as the recipients for `curvatures'. Note that the idea of using curvatures variables in the various partition functions is also used in Vergne's articles \cite{ver:pos,ver:for}.

\subsection{Equivariant Bernoulli series.}
It will be convenient to consider the lists of weights $\bfalpha=\{\alpha_1,\ldots,\alpha_n\}$ in $\Lambda^*$ as the weights of a unitary  $\lieT$-representation 
\[ A\colon \lieT\to \on{U}(n),\ t\mapsto A(t), \] 
where $A(t)e_i=t^{\alpha_i}\,e_i$. We will use the same letter to denote the infinitesimal representation $A\colon \liet\to \mf{u}(n)$. With this notation, the Bernoulli series  can be written as a sum $B_{A}(\lambda)={\sum}'_{\xi\in \Xi} e^{2\pi \i \l\lambda,\xi\r} \det(A(\xi))^{-1}$.
Letting $\lieH\subset \U(n)$ be the Lie group of transformations commuting with all $A(t)$, and $\lieh$ its Lie algebra, we define the `$\lieH$-equivariant' multiple Bernoulli sum as the following (generalized) 
function of $\lambda$, for $X\in \lieh$ sufficiently small:
\begin{equation}\label{eq:equiber}
  B_A(\lambda;\,X)=\sideset{}{'}\sum_{\xi\in \Xi} \frac{e^{2\pi \i \l\lambda,\xi\r}}{\det(A(\xi)+X)}.
 \end{equation}
Here ${\sum}'$ signifies, as before, a sum over all $\xi\in\Xi$ such that 
$\det(A(\xi))\neq 0$. A vector $\tau\in \liet$ is polarizing for the list $\bfalpha$ if 
and only if $\det A(\tau)\neq 0$, and in this case we may define 
\begin{equation}\label{eq:equiheav}
  H_{A,\tau}(\lambda;\,X)=\lim_{\epsilon\to 0^+}\int_{\xi\in\t} \frac{e^{2\pi \i \l\lambda,\xi\r}}{\det(A(\xi-\i \epsilon\tau)+X)}\ \d\xi.
  \end{equation}
Both $X\mapsto B_A(\lambda;\,X)$ and $X\mapsto H_A(\lambda;\,X)$ depend analytically on $X$, for $X\in\lieh$ in a sufficiently small neighborhood of zero, in the sense that the integral against test functions is analytic. But for our purposes, it will be quite enough to treat $X$ as a formal parameter, and thus to consider the Taylor expansions. 

Consider the decomposition of the symmetric algebra as sum 
over multi-indices $J=(j_1,\ldots,j_n)$ with $j_k\ge 0$,
\[ S(\bC^n)=\bigoplus_J S^J(\bC^n),\]
where $S^J(\bC^n)=S^{j_1}(\bC)\otimes \cdots \otimes S^{j_n}(\bC)$. Since $A(t)$ are diagonal matrices, we obtain representations $A^J$ of $\lieT$ on $S^J(\bC^n)$. 


%
\begin{lemma} \label{lem:b}
We have the expansion, for $X\in \mf{u}(1)^n$, 
\[ \det(A(\xi)+X)^{-1}=\sum_J f_J(X)\,  \det(A^{J}(\xi))^{-1}.\]
Here the sum is over multi-indices $J=(j_1,\ldots,j_n)$ with $j_k>0$, and the Taylor expansion of $f_J(X)$ starts with terms of order $\ge |J|-n$.  
		%
\end{lemma}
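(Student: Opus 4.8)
The plan is to exploit the fact that the lemma restricts the equivariant parameter to $X\in\mf{u}(1)^n$, the diagonal subalgebra, so that $A(\xi)$ and $X$ are simultaneously diagonal and the determinant factors completely. Writing $X=\mathrm{diag}(x_1,\ldots,x_n)$ and $a_k=2\pi\i\pair{\alpha_k}{\xi}$, so that $A(\xi)=\mathrm{diag}(a_1,\ldots,a_n)$, one has
\[ \det(A(\xi)+X)^{-1}=\prod_{k=1}^n\frac{1}{a_k+x_k}. \]
This reduces the entire statement to a one-variable expansion applied factor by factor.

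First I would expand each factor as a geometric series in $x_k/a_k$, and reindex by $j_k=m+1$:
\[ \frac{1}{a_k+x_k}=\frac{1}{a_k}\sum_{m\ge 0}\Big(\frac{-x_k}{a_k}\Big)^{m}=\sum_{j_k\ge 1}(-x_k)^{j_k-1}\,a_k^{-j_k}, \]
valid as a convergent series for $|x_k|<|a_k|$ and, more to the point, as an identity of formal power series in the $x_k$ (which is all that is needed, since the text treats $X$ as a formal parameter). Multiplying these $n$ expansions together and distributing, I would collect terms according to the multi-index $J=(j_1,\ldots,j_n)$ with $j_k\ge 1$:
\[ \prod_{k=1}^n\frac{1}{a_k+x_k}=\sum_{J}\Big(\prod_{k=1}^n(-x_k)^{j_k-1}\Big)\Big(\prod_{k=1}^n a_k^{-j_k}\Big). \]

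Then I would read off the two factors. The coefficient $f_J(X)=\prod_k(-x_k)^{j_k-1}$ is a monomial, homogeneous of degree $\sum_k(j_k-1)=|J|-n$; in particular its Taylor expansion begins in order $|J|-n$, so the asserted bound holds with equality a fortiori. The remaining factor $\prod_k a_k^{-j_k}=\prod_k(2\pi\i\pair{\alpha_k}{\xi})^{-j_k}$ is exactly $\det(A^J(\xi))^{-1}$, the denominator of the ordinary multiple Bernoulli summand attached to $A^J$. This matches the right-hand side of the lemma and completes the identification; the $J=(1,\ldots,1)$ term, with $f_J=1$, recovers the $X=0$ contribution $\det(A(\xi))^{-1}$ as a consistency check.

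I do not expect a genuine obstacle: the only delicate points are bookkeeping. The first is the reindexing $m\mapsto j_k=m+1$ that converts the geometric-series exponents (starting at $0$) into the multi-indices with $j_k>0$ appearing in the statement. The second is confirming that the product $\prod_k a_k^{-j_k}$ is what is meant by $\det(A^J(\xi))^{-1}$; since the factored expansion forces this product, matching it pins down $A^J$ as the representation whose weight list is $\bfalpha$ with each $\alpha_k$ repeated $j_k$ times, i.e.\ the one with matrix determinant $\prod_k(2\pi\i\pair{\alpha_k}{\xi})^{j_k}$. Finally, I would note that the diagonality of $X$ is what makes the factorization clean: for a general $X\in\lieh$ the determinant no longer splits into one-variable factors, which is presumably why the order bound in the statement is phrased as an inequality rather than an equality.
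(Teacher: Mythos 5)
Your proposal is correct and is essentially the paper's own argument: the authors likewise use the diagonality of $A(\xi)$ and $X$ to reduce to the $n=1$ factor-by-factor computation, where the identity $(t+z)^{-1}=\sum_{j\ge 0}(-1)^j z^j t^{-j-1}$ is exactly your reindexed geometric series. Your extra bookkeeping (the reindexing $j_k=m+1$, the identification of $\prod_k a_k^{-j_k}$ with $\det(A^J(\xi))^{-1}$ via the weight list with multiplicities, and the observation that $f_J$ is a monomial of degree exactly $|J|-n$ on the diagonal) only makes explicit what the paper leaves implicit.
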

\begin{proof}

Since both $A(t)$ and $X$ are diagonal it is enough to consider the case $n=1$. But for $n=1$, the claim just follows from $(t+z)^{-1}=\sum_{j=0}^\infty z^j (-1)^j t^{-j-1}$.  
\end{proof}
Using the lemma, we have the expansion 
\begin{equation}\label{eq:bernoulliexpansion}
 B_A(\lambda;\,X)=\sum_{J} f_J(X)\, B_{A^J}(\lambda)\end{equation}
for the Bernoulli series, and similarly 
\[ H_{A,\tau}(\lambda;X)=\sum_J f_J(X) H_{A^J,\tau}(\lambda)\]
(with the same $f_J$).  In particular, the Taylor coefficients of $B_A(\lambda;X)$ are 
linear combinations of Bernoulli series for $A^J$. Since only $J$ with $j_k>0$ appear, the list $\bfalpha^J$ of weights appearing in the representation $A^J$ is obtained from the original list  $\bfalpha$ of weights appearing in $A$ by increasing some of the multiplicities. 
Hence, $\calS(\bfalpha^J)=\calS(\bfalpha)$. Consequently,  the the Taylor coefficients of $B_A(\lambda;X)$ are  supported on the union of $\Xi^\ast$-translates of 
$\Delta_0$, and are polynomial on each chamber of $\Delta_0$ and its translates.
A similar discussion applies to $H_{A,\tau}$. 

As  in Section \ref{sec:decomp}, we fix an integral inner product on $\liet$, thus identifying $\liet^*\cong \liet$, and we pick $\gamma\in \liet$, with the property that for all  $\Delta\in\calS$, the orthogonal projection $\gamma_\Delta$ onto $\Delta$ lies in a chamber for $\bfalpha_\Delta$, and $\tau_\Delta=\gamma_\Delta-\gamma$ is polarizing for $\bfalpha_\Delta^{\mathsf{c}}$. 
Consider the orthogonal decomposition $\bC^n=\bC^n_\Delta\oplus (\bC^n)_\Delta^\perp$, 
where  the first summand is the sum of the coordinate lines $\C e_j$ with $\alpha_j\in\bfalpha_\Delta$, and similarly the second summand is a 
sum of coordinate lines with $\alpha_j\in \bfalpha_\Delta^{\mathsf{c}}$. 
The two summands are subrepresentations $A_\Delta$, $A_\Delta^{\mathsf{c}}$ of $\lieT$; the summand $\bC^n_\Delta$ is the subspace on which the subtorus $\lieT_\Delta\subset \lieT $ acts trivially.

The $\lieH$-action preserves this decomposition. We hence obtain 
an $\lieH$-equivariant Bernoulli series 
$B_{A_\Delta}(\lambda;\,X)$, whose Taylor coefficients are polynomial on each 
chamber of $\Delta$. We define $\on{Ber}_{(A_\Delta;\gamma_\Delta)}(\lambda;\,X)$
with equivariant variables $X\in\lieh$, by requiring that its Taylor coefficients are generalized functions supported on $\Delta$, given by 
polynomials on $\Delta$, and agreeing with the Taylor coefficients of $B_{A_\Delta}(\lambda;\,X)$ on the chamber containing $\gamma_\Delta$. 
\begin{proposition}[Equivariant Boysal-Vergne decomposition formula] 
\label{prop:equiber}
For generic choices of $\gamma$ as above, the equivariant Bernoulli series decomposes as 	 
\begin{equation}\label{eq:boysalvergne1}
B_A(X)=\sum_{\Delta\in \calS} \on{Ber}_{(A_\Delta;\gamma_\Delta)}(X)\star 
H_{(A_\Delta^{\mathsf{c}},\tau_\Delta)}(X),
\end{equation}
(as an equality of formal power series in $X$, with coefficients that are 
functions of $\lambda\in\liet^*$). 
\end{proposition}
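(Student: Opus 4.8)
The plan is to deduce the equivariant formula \eqref{eq:boysalvergne1} from the ordinary Boysal--Vergne formula \eqref{eq:boysalvergne} by expanding both sides in the equivariant variable $X$ via Lemma \ref{lem:b} and matching coefficients. First I would reduce to diagonal $X$. Since $\det(A(\xi)+X)$ depends on $X$ only through the characteristic polynomials of the blocks of $X$ (the blocks being indexed by the distinct weights in $\bfalpha$), every Taylor coefficient of $B_A(\lambda;X)$ is an $\lieH$-invariant polynomial on $\lieh$. The same holds for each factor on the right-hand side: for a fixed $\Delta$ the commutant splits as $\lieH=\lieH_\Delta\times\lieH_\Delta^{\mathsf{c}}$, where $\lieH_\Delta,\lieH_\Delta^{\mathsf{c}}$ are the commutants of $A_\Delta,A_\Delta^{\mathsf{c}}$, so each $\Delta$-term is a product of an $X_\Delta$-invariant and an $X_\Delta^{\mathsf{c}}$-invariant expression and hence $\lieH$-invariant. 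As an invariant polynomial is determined by its restriction to a Cartan subalgebra, and $\mathfrak{u}(1)^n$ is such a Cartan subalgebra of $\lieh$, it suffices to verify \eqref{eq:boysalvergne1} for diagonal $X=(x_1,\dots,x_n)$.

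For diagonal $X$, Lemma \ref{lem:b} gives $f_J(X)=\prod_k(-1)^{j_k-1}x_k^{j_k-1}$, so the $f_J$ are, up to sign, the monomials in $x_1,\dots,x_n$; in particular they are linearly independent and matching coefficients is legitimate. The key structural observation is that the partition of $\{1,\dots,n\}$ into indices $k$ with $\alpha_k\in\bfalpha_\Delta$ and those with $\alpha_k\in\bfalpha_\Delta^{\mathsf{c}}$ is compatible with the block structure of $\lieh$. Hence $X=X_\Delta\oplus X_\Delta^{\mathsf{c}}$, every multi-index factors as $J=(J_\Delta,J_\Delta^{\mathsf{c}})$, and $f_J(X)=f_{J_\Delta}(X_\Delta)\,f_{J_\Delta^{\mathsf{c}}}(X_\Delta^{\mathsf{c}})$. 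Moreover the list $\bfalpha^J$ obtained from $\bfalpha$ by raising multiplicities satisfies $(\bfalpha^J)_\Delta=(\bfalpha_\Delta)^{J_\Delta}$ and $(\bfalpha^J)_\Delta^{\mathsf{c}}=(\bfalpha_\Delta^{\mathsf{c}})^{J_\Delta^{\mathsf{c}}}$, together with the crucial identity $\calS(\bfalpha^J)=\calS(\bfalpha)$ noted before the statement.

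With these preparations I would expand the right-hand side of \eqref{eq:boysalvergne1}. Using \eqref{eq:bernoulliexpansion} for $B_{A_\Delta}$ and its analogue for $H_{A_\Delta^{\mathsf{c}},\tau_\Delta}$, and the fact that the Taylor coefficient of $\on{Ber}_{(A_\Delta;\gamma_\Delta)}(X)$ indexed by $J_\Delta$ is exactly the polynomial germ $\on{Ber}_{((\bfalpha_\Delta)^{J_\Delta};\gamma_\Delta)}$ (this follows from the defining property of the equivariant $\on{Ber}$, since the polynomial-germ operation is linear and commutes with extracting Taylor coefficients), the $\Delta$-term becomes $\sum_J f_J(X)\,\on{Ber}_{((\bfalpha^J)_\Delta;\gamma_\Delta)}\star H_{((\bfalpha^J)_\Delta^{\mathsf{c}},\tau_\Delta)}$. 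Summing over $\Delta\in\calS(\bfalpha)=\calS(\bfalpha^J)$ and interchanging the coefficient-wise finite sums over $\Delta$ and $J$, the inner sum over $\Delta$ is precisely the ordinary Boysal--Vergne formula \eqref{eq:boysalvergne} applied to the list $\bfalpha^J$, hence equals $B_{\bfalpha^J}=B_{A^J}$. The right-hand side thus collapses to $\sum_J f_J(X)\,B_{A^J}=B_A(\lambda;X)$ by \eqref{eq:bernoulliexpansion}, which is the left-hand side.

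The step I expect to be the main obstacle is the uniformity of the genericity hypothesis on $\gamma$: the non-equivariant formula must be applied simultaneously to every $\bfalpha^J$, with the same $\gamma$ and the same data $\gamma_\Delta,\tau_\Delta$. This is exactly what $\calS(\bfalpha^J)=\calS(\bfalpha)$ secures, since the admissible subspaces, their chambers, and hence the conditions defining a generic $\gamma$ depend only on the underlying set of weights and not on their multiplicities; a single generic choice of $\gamma$ therefore works for all $J$ at once. The remaining verifications --- that convolution commutes with the $X$-expansion and that each $X$-coefficient is a locally finite identity of generalized functions --- are routine given the support properties recorded in Section \ref{sec:partition}.
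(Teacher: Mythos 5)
Your argument is correct and follows essentially the same route as the paper's proof: reduce to diagonal $X\in\mathfrak{u}(1)^n$ by $\lieH$-invariance, expand each factor via Lemma \ref{lem:b}, use the factorization $f_J=f_{J'}^{\Delta}f_{J''}^{\Delta,\mathsf{c}}$, and apply the non-equivariant Boysal--Vergne formula to each list $\bfalpha^J$ to recover the expansion \eqref{eq:bernoulliexpansion}. The extra points you flag (linear independence of the $f_J$, uniformity of the genericity of $\gamma$ across all $J$ via $\calS(\bfalpha^J)=\calS(\bfalpha)$) are correct and are left implicit in the paper.
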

\begin{proof}
By $\lieH$-invariance, it suffices to prove the equality of both sides over $\mf{u}(1)^n$. For $X\in\mf{u}(1)^n$, we have the expansions
\[  \on{Ber}_{(A_\Delta;\gamma_\Delta)}(\lambda;\,X)=\sum_{J'} f_{J'}^\Delta(X) \on{Ber}_{(A_\Delta^{J'};\gamma_\Delta)}(\lambda),\]
where the $f_{J'}^\Delta(X)$ are defined similar to the $f_J(X)$, but using only the basis elements belonging to $\C^n_\Delta$. Similarly, 
\[ H_{(A_\Delta^{\mathsf{c}},\tau_\Delta)}(\lambda;X)=\sum_{ J''} f_{J''}^{\Delta,\mathsf{c}}(X)
H_{(A^{J''}_{\Delta,\mathsf{c}},\tau_\Delta)}(\lambda)\]
where $f_{J''}^{\Delta,\mathsf{c}}(X)$ are defined in terms of the representation on $(\C^n_\Delta)^\perp$. Taking the convolution of these expressions, and using,  
\[ f_J(X)=f_{J'}^\Delta(X)f_{J''}^{\Delta,\mathsf{c}}(X)\]
for $J=J'\cup J''$, we obtain 
\[ \on{Ber}_{(A_\Delta;\gamma_\Delta)}(X)\star 
H_{(A_\Delta^{\mathsf{c}},\tau_\Delta)}(X)=
\sum_J f_J(X) 
\on{Ber}_{(A_\Delta^{J'};\gamma_\Delta)}\star H_{(A^{J''}_{\Delta,\mathsf{c}},\tau_\Delta)}.
\]
Summing over all $\Delta$, and using the non-equivariant Boysal-Vergne decomposition formula we recover \eqref{eq:bernoulliexpansion}.
\end{proof}

\subsection{Equivariant Verlinde sums.}
should one 
For the Verlinde sums, we have the $\lieT$-representation $A$ as in the previous section, as well as possibly an additional diagonal matrix $U\in \on{U}(n)$ such that some power of $U$ is the identity. We denote by  $\ul{A}=(A,U)$ the `augmented representation'; it determines a list $\ubfalpha$ of augmented weights $(\alpha,u)$  for the 1-dimensional weight spaces. 

Take $\lieH\subset \on{U}(n)$ to be the subgroup of transformations commuting with $U$ and with all $A(t)$, and $\lieh\subset \mf{u}(n)$ its Lie algebra. As before, $\lieH$ contains $\U(1)^n$. For $X\in \lieh$ sufficiently small, put 
\begin{equation}\label{eq:equiver}
  V_{\ul{A}}(\lambda,\ell;\,X)=\sideset{}{'}\sum_{t \in \lieT_\ell} \frac{t^{\lambda}}{\det(1-U\,A(t^{-1})\exp(-X))}\end{equation}
where the sum is over all $t\in \lieT_\ell$ such that $U\,A(t^{-1})$ has no eigenvalue equal to $1$. This is a is a well-defined $\lieH$-invariant analytic function of $X$, for $X$ sufficiently small, which reduces to the Verlinde sum $V_{\ubfalpha}(\lambda,\ell)$ when $X=0$. 
We also have an equivariant version of the generalized partition function, for a 
given polarizing vector $\tau\in\liet$ such that $\l\alpha,\tau\r\neq 0$ for all weights $\alpha$:
\begin{equation}\label{eq:equipar}
 P_{(\ul A,\tau)}(\lambda;\,X)=\lim_{\epsilon\to 0^+}\int_{\lieT}  \frac{t^\lambda}{\det(1-U\,A(t^{-1}\exp(\i \epsilon\tau)) \exp(-X)) }\ \d t.
 \end{equation}
As in the case of the Bernoulli series, we will consider $X$ as a `formal variable', hence we will work with the Taylor expansions with respect to $X$. 

For multi-indices $J=(j_1,\ldots,j_n)$ with $j_k>0$, 
we obtain  $\lieT$-representations $A^J$ 
on $S^J(\bC^n)$, with commuting endomorphisms $U^J$.  

\begin{lemma}\label{lem:v} For $X\in \mf{u}(1)^n$, we have that 
		\[ \big(\det(1-U A(t^{-1})\exp(-X))\big)^{-1}= \sum_J g_J(X) 
		 \det(1-U^J A^J(t^{-1}))^{-1},\]
		where the sum is over multi-indices $J=(j_1,\ldots,j_n)$ with $j_k>0$, and where the Taylor expansion of $g_J(X)$ starts with terms of order $|J|-n$. 
\end{lemma}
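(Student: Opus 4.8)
The plan is to follow the proof of Lemma~\ref{lem:b} verbatim at the structural level, reducing to one variable and then carrying out an explicit power-series expansion. Since $X\in\mf{u}(1)^n$ is diagonal and $U$, $A(t)$ are diagonal as well, the matrix $1-U\,A(t^{-1})\exp(-X)$ is diagonal, so writing $w_k=u_k\,t^{-\alpha_k}$ and $X=\on{diag}(x_1,\ldots,x_n)$ the left-hand side factors as $\prod_{k=1}^n (1-w_k\,e^{-x_k})^{-1}$. Likewise, the building blocks factor as $\det(1-U^J A^J(t^{-1}))^{-1}=\prod_{k=1}^n (1-w_k)^{-j_k}$ (the exact multiplicative analogue of $\det(A^J(\xi))=\prod_k(2\pi\i\pair{\alpha_k}{\xi})^{j_k}$ in Lemma~\ref{lem:b}). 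Hence the whole identity factors over $k$, and it suffices to establish the scalar expansion $(1-w\,e^{-x})^{-1}=\sum_{j\ge 1} g_j(x)\,(1-w)^{-j}$ with $\on{ord}_x g_j(x)=j-1$; one then sets $g_J(X)=\prod_k g_{j_k}(x_k)$, whose leading order is $\sum_k(j_k-1)=|J|-n$, as required.

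For the scalar identity I would first write $e^{-x}=1-\eta$ with $\eta=1-e^{-x}$ of order $1$ in $x$, and expand geometrically,
\[ \frac{1}{1-w\,e^{-x}}=\frac{1}{(1-w)+w\eta}=\sum_{k\ge 0}(-1)^k\,\frac{w^k\,\eta^k}{(1-w)^{k+1}}. \]
The hard part, and the feature genuinely new compared with the additive Bernoulli case $(a+x)^{-1}=\sum_{j\ge1}(-x)^{j-1}a^{-j}$, is that each summand still carries a numerator $w^k$, so the expansion is not yet expressed in the intended basis $\{(1-w)^{-j}\}$. I would resolve this by re-expanding $w^k=\big(1-(1-w)\big)^k=\sum_{i=0}^k\binom{k}{i}(-1)^i(1-w)^i$ and collecting the coefficient of each $(1-w)^{-j}$; with $j=k+1-i$ this yields $g_j(x)=(-1)^{j-1}\sum_{k\ge j-1}\binom{k}{j-1}\eta^k$, whose lowest term is $\eta^{j-1}$, i.e.\ order $j-1$ in $x$. (Summing the series gives the closed form $g_j(x)=(-1)^{j-1}e^{jx}(1-e^{-x})^{j-1}$, which makes the order manifest.)

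A cleaner conceptual check I would run in parallel is to observe that $(1-w\,e^{-x})^{-1}=e^{-x\,w\partial_w}(1-w)^{-1}$ and that $(w\partial_w)^r(1-w)^{-1}$ lies in the span of $(1-w)^{-1},\ldots,(1-w)^{-(r+1)}$ with top coefficient $r!\,(1-w)^{-(r+1)}$; reading off the coefficient of $(1-w)^{-j}$ then shows at once that $g_j(x)=(-x)^{j-1}+O(x^j)$. Either way the only real content is this order bookkeeping. Taking the product over $k=1,\ldots,n$ then reassembles the stated expansion, and the bound $\on{ord}_X g_J=|J|-n$ is exactly what guarantees that each fixed Taylor coefficient in $X$ receives contributions from only finitely many $J$, so that the right-hand side is well defined as a formal power series in $X$.
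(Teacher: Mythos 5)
Your proposal is correct and follows essentially the same route as the paper: reduce to $n=1$ using that all matrices are diagonal, then expand the scalar factor in powers of $(1-w)^{-1}$ and track the order of vanishing of the coefficients. In fact your closed form $g_j(x)=(-1)^{j-1}e^{jx}(1-e^{-x})^{j-1}$ is the correct one, whereas the identity displayed in the paper's proof, $(1-a^{-1}e^{-z})^{-1}=\sum_{j\ge 0}(-1)^j(1-e^{-z})^j(1-a^{-1})^{-j-1}$, is missing the factor $e^{(j+1)z}$ (this does not affect the order count $\ge |J|-n$, so the lemma's conclusion is unchanged).
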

\begin{proof}
Similar to the proof of Lemma \ref{lem:b}, it suffices to consider the case $n=1$, where it follows from 
\[ (1-a^{-1}\ e^{-z})^{-1}=\sum_{j=0}^\infty (-1)^j (1-e^{-z})^j (1-a^{-1})^{-j-1}.\qedhere\]
\end{proof}
It follows that
\begin{equation}\label{eq:verlindetaylor}
 V_{\ul A}(\lambda,\ell;\,X)=\sum_J g_J(X)\ V_{\ul{A} ^J}(\lambda,\ell),\end{equation}
and similarly for the partition function $P_{\ul A}(\lambda;\,X)$, with the same coefficients $g_J(X)$. By a discussion parallel to that for Bernoulli series, the Taylor 
coefficients of $V_{\ul A}(\lambda,\ell;\,X)$ are quasi-polynomial on 
the \emph{same} regions (e.g., \eqref{eq:region}) as for $X=0$. 

Once again, we fix an element $\gamma\in\liet$ that is generic with respect to $\calS$.  
For $\Delta\in \calS$  we obtain an equivariant  Verlinde sum $V_{\ul{A}_\Delta}(X)$, and a $\lieH$-equivariant function 
\[  \Ver_{(\ul{A}_\Delta;\gamma_\Delta)}(\lambda,\ell;\,X)\] 
whose Taylor coefficients 
are quasi-polynomial on $\{(\lambda,\ell)\colon \lambda\in \Delta\}$ 
and agrees with $V_{\ul{A}_\Delta}(\lambda,\ell;X)$ whenever  $\frac{1}{\ell}\lambda$ lies in the same chamber as $\gamma_\Delta$. 
The decomposition formula extends to the setting with parameters: 
\begin{proposition}[Equivariant decomposition formula for Verlinde sums]
\label{eq:equiver}
\begin{equation}\label{decompformal}
 V_{\ul A}(X)=
\sum_{\Delta\in\calS}   \Ver_{(\ul{A}_\Delta,\gamma_\Delta)}(X)\star 
P_{(\ul{A}_\Delta^{\mathsf{c}},\tau_\Delta)} (X)
\end{equation}
(convolution of functions on $\Lambda^\ast$, for fixed $\ell$). 
\end{proposition}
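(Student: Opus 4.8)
The plan is to mirror exactly the proof of the equivariant Boysal--Vergne formula in Proposition \ref{prop:equiber}, replacing Bernoulli series by Verlinde sums, Heaviside functions by generalized partition functions, and the scalar coefficients $f_J$ by the coefficients $g_J$ of Lemma \ref{lem:v}. By the $\lieH$-invariance of both sides it suffices to verify the identity for $X\in\mf{u}(1)^n$, where Lemma \ref{lem:v} provides a multiplicative expansion $g_J(X)=\prod_k g_{j_k}(x_k)$ into one-variable factors. In particular, for the orthogonal splitting $\bC^n=\bC^n_\Delta\oplus(\bC^n)_\Delta^\perp$ and the corresponding splitting $J=J'\cup J''$ of a multi-index (with $J'$ indexing the coordinates belonging to $\bfalpha_\Delta$ and $J''$ those belonging to $\bfalpha_\Delta^{\mathsf{c}}$), one has $g_J=g_{J'}^\Delta\,g_{J''}^{\Delta,\mathsf{c}}$, where the factors are the coefficients for the subrepresentations on $\bC^n_\Delta$ and $(\bC^n)_\Delta^\perp$ respectively.

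First I would record the Taylor expansion of each germ. Applying Lemma \ref{lem:v} to the subrepresentation $\ul{A}_\Delta$ gives $V_{\ul{A}_\Delta}(\lambda,\ell;X)=\sum_{J'} g_{J'}^\Delta(X)\,V_{\ul{A}_\Delta^{J'}}(\lambda,\ell)$. I claim the \emph{same} coefficients govern the germs, i.e.
\[ \Ver_{(\ul{A}_\Delta;\gamma_\Delta)}(X)=\sum_{J'} g_{J'}^\Delta(X)\,\Ver_{(\ul{A}_\Delta^{J'};\gamma_\Delta)}. \]
This follows from the uniqueness characterisation of the quasi-polynomial germ: coefficient by coefficient in $X$, both sides are supported on $\{\lambda\in\ell\Delta\}$ with quasi-polynomial Taylor coefficients there, and both agree with $V_{\ul{A}_\Delta}(X)$ on the chamber of $\gamma_\Delta$ --- the right-hand side because each $\Ver_{(\ul{A}_\Delta^{J'};\gamma_\Delta)}$ agrees with $V_{\ul{A}_\Delta^{J'}}$ on that chamber (using that increasing multiplicities leaves $\calS(\bfalpha_\Delta)$, hence the chamber structure of $\Delta$, unchanged). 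The analogous, and more elementary, statement $P_{(\ul{A}_\Delta^{\mathsf{c}},\tau_\Delta)}(X)=\sum_{J''} g_{J''}^{\Delta,\mathsf{c}}(X)\,P_{(\ul{A}^{J''}_{\Delta,\mathsf{c}},\tau_\Delta)}$ for the partition functions is immediate from Lemma \ref{lem:v} applied to the complementary subrepresentation.

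Next I would convolve the two expansions and regroup. Since convolution is bilinear over the scalar coefficients, and using $g_J=g_{J'}^\Delta g_{J''}^{\Delta,\mathsf{c}}$ for $J=J'\cup J''$, the $\Delta$-term becomes $\sum_J g_J(X)\,\Ver_{(\ul{A}_\Delta^{J'};\gamma_\Delta)}\star P_{(\ul{A}^{J''}_{\Delta,\mathsf{c}},\tau_\Delta)}$. Summing over all $\Delta\in\calS$ and interchanging the (locally finite) $\Delta$-sum with the $J$-sum, the coefficient of $g_J(X)$ is exactly $\sum_\Delta \Ver_{((\ul{A}^J)_\Delta;\gamma_\Delta)}\star P_{((\ul{A}^J)_\Delta^{\mathsf{c}},\tau_\Delta)}$, which by the non-equivariant decomposition formula (Theorem \ref{th:decomposition}) applied to the augmented representation $\ul{A}^J$ equals $V_{\ul{A}^J}$; here I use $(\ul{A}^J)_\Delta=\ul{A}_\Delta^{J'}$ and $(\ul{A}^J)_\Delta^{\mathsf{c}}=\ul{A}^{J''}_{\Delta,\mathsf{c}}$. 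Hence the right-hand side of \eqref{decompformal} equals $\sum_J g_J(X)\,V_{\ul{A}^J}$, which is $V_{\ul A}(X)$ by the expansion \eqref{eq:verlindetaylor}.

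The routine-but-essential technical points, which I expect to be the only real obstacle, are the well-definedness of all the rearrangements at the level of formal power series in $X$ with function-valued coefficients. One must check that for each fixed total order in $X$ only finitely many $J$ contribute (guaranteed by the order bound $|J|-n$ in Lemma \ref{lem:v}), and that for each fixed $\lambda\in\Lambda^\ast$ the $\Delta$-sum is finite (guaranteed by the support and local-finiteness analysis already carried out in the proof of Theorem \ref{th:decomposition}). Granting these, the interchange of sums and the passage from the double index $(J',J'')$ to $J$ are legitimate, and the proof concludes as above.
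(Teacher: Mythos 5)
Your proposal is correct and follows essentially the same route as the paper's own (brief) proof: reduce by $\lieH$-invariance to $X\in\mf{u}(1)^n$, expand the germs and partition functions via the coefficients $g_{J'}^\Delta$, $g_{J''}^{\Delta,\mathsf{c}}$ with $g_J=g_{J'}^\Delta g_{J''}^{\Delta,\mathsf{c}}$, and resum using the non-equivariant decomposition formula together with \eqref{eq:verlindetaylor}. You in fact supply a detail the paper leaves implicit --- the justification, via the uniqueness characterisation of the quasi-polynomial germ, that $\Ver_{(\ul{A}_\Delta;\gamma_\Delta)}(X)$ expands with the same coefficients $g_{J'}^\Delta$ as $V_{\ul{A}_\Delta}(X)$ --- which is a welcome addition.
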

\begin{proof}[Outline of proof]
The proof is parallel to that of Proposition \ref{prop:equiber}, hence we will be brief.
By $\lieH$-invariance, it suffices to prove the identity for $X\in\mf{u}(1)^n$.  
The terms on the right hand side have expansions
\[  \Ver_{(\ul{A}_\Delta;\gamma_\Delta)}(X)
=\sum_{J'} g_{J'}^\Delta(X) \ \Ver_{(\ul{A}^{J'}_\Delta;\gamma_\Delta)},\]
and 
\[ P_{(A_\Delta^{\mathsf{c}},\tau_\Delta)}(X)=\sum_{ J''} g_{J''}^{\Delta,\mathsf{c}}(X)
P_{(A^{J''}_{\Delta,\mathsf{c}},\tau_\Delta)}.\]
Taking their convolution product, and using
\[ g_{J'}^\Delta(X)g_{J''}^{\Delta,\mathsf{c}}(X)= g_J(X)\]
one may carry out the summation over $\Delta$ by using the non-equivariant decomposition formula for Verlinde sums. The resulting expression is the 
expansion for $V_{\ul A}(X)$. 
\end{proof}

\subsection{Differential form valued Bernoulli series and Verlinde sums.} \label{SubsectionDiffFormValuedPartition}
%

Suppose now that $E\to M$ is a $\lieT$-equivariant Hermitian vector bundle over a connected base, where the action on the base $M$ is trivial. The $T$-action will be denoted $A_E\colon \lieT\to \on{Aut}(E)$; it may be regarded as a family of unitary $\lieT$-representations, smoothly labeled by the points of $M$. Letting $R_E\in \Omega^2(M,\on{End}(E))$ be the curvature of a $\lieT$-invariant connection on $E$, and 
$\Eul(E,\cdot)$ the corresponding $\lieT$-equivariant Euler form, we have for all $\xi\in\liet$
\[ \Eul(E,-2\pi \i \xi)=\det\big(A_E(\xi)+\tfrac{\i}{2\pi} R_E\big)\in \Om(M).\]
The expression 
\[ B_E(\lambda)=\sideset{}{'}\sum_{\xi\in \Xi^\ast} \f{e^{2\pi \i \l \lambda,\xi\r}}{\Eul(E,-2\pi \i \xi)}\in \Omega(M)\]
appears in the fixed point formula for Duistermaat-Heckman measures of Hamiltonian loop group spaces (see \cite{loi:nor}). It may be regarded as the characteristic form corresponding to an equivariant Bernoulli series. To see this more clearly, 
note that by rigidity of actions of compact Lie groups, the fibers of $E$ are 
 equivariantly isomorphic to a fixed $\lieT$-representation $t\mapsto A(t)$ on some $\bC^n$, which we may take to be a representation by diagonal matrices. 
The associated frame bundle $\on{Fr}(E)$, with fibers the $\lieT$-equivariant unitary isomorphisms from fibers of $E$ to $\bC^n$, has structure group $\lieH$ the unitary automorphisms commuting with $\lieT$. The Chern-Weil map takes $\lieH$-invariant formal power series on $\lieh$ to differential forms on the base. 
Applying the Chern-Weil map to $B_A(\lambda,X)$ we obtain the closed differential form $B_E(\lambda)$. As an $\Omega(M)$-valued  function of $\lambda$, it is polynomial on chambers of $\calS$. Given $\Delta\in\calS$, and a generic choice of $\gamma\in\liet$ as in previous sections, 
we define $\Omega(M)$-valued generalized functions of $\lambda$
\begin{equation}\label{eq:berh}
 \on{Ber}_{(E_\Delta;\gamma_\Delta)}(\lambda),\ \ H_{(E_{\Delta}^\perp,\tau_\Delta)}(\lambda)\end{equation}
where $E_\Delta\subset E$ is the subbundle fixed by $T_\Delta$, and $E_{\Delta}^\perp$ its orthogonal complement in $E$. Both are associated bundles to $\on{Fr}(E)$, and the differential forms \eqref{eq:berh} are obtained by applying the Chern-Weil map to the corresponding $\lieH$-equivariant invariant 
functions. We obtain the form-valued decomposition formula, 
\[  B_E=\sum_{\Delta\in\calS} \on{Ber}_{(E_\Delta;\gamma_\Delta)}\star H_{(E_\Delta^\perp,\tau_\Delta)}.\]

For the Verlinde sums, we allow for a slightly more general setting where $E$ comes with a  unitary automorphism $U_E$, fixing the base and commuting with $A_E$, and such that some power of $U_E$ is the identity. We write $\ol{A}_E=(A_E,U_E)$.
By rigidity of actions of compact groups, the fibers of $E$ are isomorphic to $\C^n$ with a given $\lieT$-action by diagonal matrices and an additional unitary automorphism $U$, also by diagonal matrices. Take $\on{Fr}(E)$ to be the 
corresponding frame bundle, consisting of unitary maps from fibers of $E$ to 
$\C^n$ intertwining $A_E,U_E$ with $A,U$, respectively; it is a principal $\lieH$-bundle, where  $\lieH\subset \on{U}(n)$ is the group of unitary automorphism commuting with both $A$ and $U$. 

Choose a principal connection on $\on{Fr}(E)$, and let $R_E\in \Omega^2(M,\End(E))$ be the curvature form of the resulting linear connection on $E$. Then 
\[ \calDC(E,t)=\det(1-U_E\,A_E(t)^{-1}e^{-\frac{\i}{2\pi} R_E})\in \Omega(M),\] 
for $t\in \lieT$
is the characteristic form corresponding to $\det(1-U\,A(t)^{-1}\exp(X))$. The expression
\[ V_{E}(\lambda,\ell):=\sideset{}{'}\sum_{t \in \lieT_\ell} \frac{t^{\lambda}}{ \calDC(E,t)}\in \Omega(M)\]
arises from the localization formula for the quantization of Hamiltonian loop group spaces \cite{al:fi,me:twi}; it is also obtained by applying the Chern-Weil map to 
the equivariant Verlinde sum $V_A(\lambda,\ell;X)$.  The decomposition formula for these differential form-valued Verlinde sums reads as 
\begin{equation}\label{eq:decompositionformula2}
	V_E=\sum_{\Delta \in \calS} \Ver_{(E_\Delta;\gamma_\Delta)}\star P_{(E_\Delta^\perp,\tau_\Delta)},
	\end{equation}
for generic choice of $\gamma\in\liet$. Here $\Ver_{(E_\Delta;\gamma_\Delta)}$ and $P_{(E_\Delta^\perp,\tau_\Delta)}$ are obtained by applying the Chern-Weil map to the corresponding $\lieH$-equivariant functions.

\bigskip\bigskip

\begin{appendix}
\section{Functions on lattices}\label{app:lattices}
Let $\Lambda$ be a lattice, with dual lattice $\Lambda^\ast=\Hom(\Lambda,\bZ)$. 
We denote by $\liet=\Lambda\otimes_{\bZ}\bR$ the vector space spanned by 
$\Lambda$ and by  $\lieT=\liet/\Lambda$ the torus.  

\subsection{Functions on $\Lambda^\ast$.}\label{appsubsec:functions} 
Consider the vector space $\Map(\Lambda^\ast,\bC)$ of functions $f\colon \Lambda^\ast\to \bC$, and its subspace $\Map_0(\Lambda^\ast,\bC)$ of functions of compact (i.e., finite) support. 
Every subset $S\subset \Lambda^\ast$ defines a \emph{delta-function}
\[\delta_S\in \Map(\Lambda^\ast,\bC),\ \ \delta_S(\lambda)=
\begin{cases}
1 &\colon  \lambda\in S\\ 0 &\colon \lambda\not\in S
\end{cases}\]

Every $t\in \lieT $ defines an evaluation function 
\[ \mathsf{e}_t\in \Map(\Lambda^\ast,\bC),\ \ \mathsf{e}_t(\lambda)= t^\lambda.
\]

\subsection{Push-forward, pull-back.} \label{appsubsec:pushpull} 
Given a lattice homomorphism $\phi\colon (\Lambda')^\ast\to \Lambda^\ast$ (dual to some morphism $\chi\colon \Lambda\to \Lambda'$)  we define the usual pull-back map $\phi^*\colon \Map(\Lambda^\ast,\bC)\to \Map((\Lambda')^\ast,\bC)$, as well as the  push-forward
\[ \phi_*\colon 
\Map_0((\Lambda')^\ast,\bC)\to \Map_0(\Lambda^\ast,\bC)\] 
by duality to the pull-back under $\chi$. Explicitly, 
$(\phi_* f)(\lambda)=\sum_{\lambda'\in \phi^{-1}(\lambda)} f(\lambda')$.
\subsection{Convolution.}\label{appsubsec:convol} 
As a special case, letting  $\mathrm{Add}\colon \Lambda^\ast\times \Lambda^\ast \to \Lambda^\ast$ be the addition (with dual map $\Lambda\to \Lambda\times \Lambda$ the diagonal inclusion), we define the convolution 
\[f_1\star f_2=\mathrm{Add}_*(f_1\otimes f_2).\]
Thus, $(f_1\star f_2)(\lambda)=\sum_{\lambda_1+\lambda_2=\lambda} f_1(\lambda_1)f_2(\lambda_2)$.
More generally, $f_1*f_2$ is defined even for functions of non-compact support, provided $\mathrm{Add}$ restricts to a proper map on the support of $f_1 * f_2$. 
Some basic properties of the convolution are 
\begin{equation} (\mathsf{e}_t \ f)\star(\mathsf{e}_t \ g)= \mathsf{e}_t \ (f \star g),
\end{equation}
\begin{equation}
(\phi^*f \star g)=\phi^*(f \star\phi_* g). \end{equation}
Convolution with $\delta_\mu$ for $\mu\in \Lambda^\ast$ acts as a translation: $(\delta_\mu\star f)(\lambda)=f(\lambda-\mu)$.

\subsection{Finite difference operators.} \label{appsubsec:findif} 
For any pair $\ul\beta=(\beta,v)$, where $\beta\in \Lambda^*$ and $v\in \bC$ (usually taken to be a root of unity), define the following finite difference operator on the space of functions $f\colon \Lambda^\ast\to \bC$:
\begin{equation}\label{eq:finitedifference}
 (\nabla_{\ul\beta} f)(\lambda)=f(\lambda)-v\, f(\lambda-\beta).\end{equation}
The finite difference operators for any two such functions $\beta_1,\beta_2$ commute. Under convolution of functions 
\begin{equation}
\nabla_\beta(f\star g)=(\nabla_\beta f)\star g=f\star (\nabla_\beta g)
\end{equation}

%
\subsection{Finite Fourier transform.}\label{appsubsec:finitefouriertransform}
Suppose $\Xi\subset \liet$ is a lattice containing $\Lambda$. Then 
$\Xi/\Lambda$ is a finite group, with dual group $\Lambda^*/\Xi^*$. 
If $f\in \Map(\Lambda^*,\bC)$ is $\Xi^*$-periodic, it has a finite Fourier expansion
\[ f(\lambda)=\sum_{t\in \Lambda/\Xi} f^\vee(t)\  t^\lambda;\ \ \  f^\vee(t)=\frac{1}{\# (\Xi/\Lambda)} \sum_{ \Lambda^*/\Xi^*}\ f(\lambda)\, t^{-\lambda}.\]
We have that $(\nabla_{\ul\beta}f)^\vee(t)=(1-v\,t^{-\beta})\,f^\vee(t)$.

\subsection{Poisson summation formula.}\label{appsubsec:poisson} 
Given a rational subspace $\lieh\subset \liet$, the (possibly disconnected) group $\exp(\Xi+\lieh)\subset \lieT $ has a normalized Haar measure. By pull-back, this defines a 
measure on $\Xi+\lieh\subset \liet$, which in turn pushes forward to a delta-measure 
\[ \delta_{\Xi+\lieh} \in \calD'(\liet)\]
supported on $\Xi+\lieh$. We may write this distribution as a sum $\sum_U \delta_U$
of delta-measures supported on affine subspaces $\xi+\lieh$, where $\xi$ ranges over representatives of $\Xi/\lieh$. We have the following version of the Poisson summation formula:

\begin{proposition}
	\label{PoissonSummation}
	Let $\Xi \supset \Lambda$ be full rank lattices in $\liet$, and let $\lieh \subset \liet$ be a rational subspace.  Let $dX$ be Lebesgue measure on $\liet$, normalized such that the induced measure on $\lieT=\liet/\Lambda$ is normalized Haar measure.  Then we have an equality of distributions on $\liet$, 
	\[ \Big(\sum_{\nu \in \Xi^\ast \cap \ann(\lieh)} e^{2\pi \i\pair{\nu}{X}}\Big)\ dX=
	\delta_{\Xi+\lieh}.\]
\end{proposition}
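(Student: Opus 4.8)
The plan is to reduce the statement to the classical Poisson summation formula for a full rank lattice in a vector space, by passing to the quotient $W:=\liet/\lieh$. Let $\pi\colon\liet\to W$ be the projection. Since $\lieh$ is rational, $\Lambda\cap\lieh$ has full rank in $\lieh$, so the images $\Lambda'':=\pi(\Lambda)$ and $\Xi'':=\pi(\Xi)$ are full rank lattices in $W$ with $\Lambda''\subset\Xi''$ of finite index $[\Xi'':\Lambda'']$. Under the identification $W^*=\ann(\lieh)$, a functional $\nu\in\ann(\lieh)$ satisfies $\pair{\nu}{\xi}=\pair{\nu}{\pi(\xi)}$ for $\xi\in\Xi$, so $\nu\in\Xi^*$ if and only if $\nu\in(\Xi'')^*$; that is, the index set $\Xi^*\cap\ann(\lieh)$ of the left hand side is precisely the dual lattice $(\Xi'')^*$. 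Both sides are invariant under translation by $\lieh$ and by $\Xi$, which is what makes the passage to $W$ natural; to prove the identity it suffices to pair both sides against an arbitrary $f\in C_c^\infty(\liet)$.

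For the right hand side, I would unwind the definition of $\delta_{\Xi+\lieh}$. The closed subgroup $G=\exp(\Xi+\lieh)\subset\lieT$ has identity component $S=\exp(\lieh)$ and component group $\pi_0(G)\cong\Xi''/\Lambda''$ of order $[\Xi'':\Lambda'']$; hence normalized Haar measure assigns mass $[\Xi'':\Lambda'']^{-1}$ to each coset of $S$. Pulling back, each affine sheet $\xi+\lieh$ of $\Xi+\lieh$ carries $[\Xi'':\Lambda'']^{-1}$ times the $(\Lambda\cap\lieh)$-normalized Lebesgue measure $d_\lieh Y$ on $\lieh$ (the one with fundamental-domain mass $1$). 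Writing $F:=\pi_*f$ for the fibre integral $F(\pi(X))=\int_\lieh f(X+Y)\,d_\lieh Y$ (a function in $C_c^\infty(W)$, well defined by translation invariance of $d_\lieh Y$), this yields
\[ \pair{\delta_{\Xi+\lieh}}{f}=\frac{1}{[\Xi'':\Lambda'']}\sum_{\xi''\in\Xi''}F(\xi''). \]

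For the left hand side, pairing against $f$ gives $\sum_{\nu\in\Xi^*\cap\ann(\lieh)}\wh f(\nu)$ with $\wh f(\nu)=\int_\liet e^{2\pi\i\pair{\nu}{X}}f(X)\,dX$. I would factor $dX=d_\lieh Y\cdot dX''$ through the exact sequence $0\to\lieh\to\liet\to W\to 0$; since $dX$ and $d_\lieh Y$ have $\Lambda$- and $(\Lambda\cap\lieh)$-covolume $1$, the induced Lebesgue measure $dX''$ on $W$ has $\Lambda''$-covolume $1$. As $\nu\in\ann(\lieh)$ is constant along the fibres of $\pi$, Fubini gives $\wh f(\nu)=\wh F(\nu)$, the Fourier transform on $(W,dX'')$. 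Using $\Xi^*\cap\ann(\lieh)=(\Xi'')^*$ and classical Poisson summation on $W$ for the lattice $\Xi''$ yields $\sum_{\nu\in(\Xi'')^*}\wh F(\nu)=\mathrm{covol}_{dX''}(\Xi'')\sum_{\xi''\in\Xi''}F(\xi'')$, with $\mathrm{covol}_{dX''}(\Xi'')=\mathrm{covol}_{dX''}(\Lambda'')/[\Xi'':\Lambda'']=[\Xi'':\Lambda'']^{-1}$. This matches the right hand side exactly, proving the equality.

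The routine part is the invocation of classical Poisson summation; the step that requires genuine care — and is the main obstacle — is the normalization bookkeeping: one must check that the factor $[\Xi'':\Lambda'']^{-1}$ produced on the right by the normalized Haar measure on $G$ (via $|\pi_0(G)|$) agrees with the covolume factor $\mathrm{covol}_{dX''}(\Xi'')$ produced on the left by classical Poisson, and that the three Lebesgue normalizations ($dX$, $d_\lieh Y$, $dX''$) are mutually compatible through the covolume product formula for the exact sequence $0\to\Lambda\cap\lieh\to\Lambda\to\Lambda''\to 0$.
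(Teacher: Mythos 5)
Your proof is correct, but it takes a genuinely different route from the paper's. You quotient by the \emph{subspace} $\lieh$, pushing the test function down to $W=\liet/\lieh$ by fibre integration and then invoking the classical Poisson summation formula for the lattice $\Xi''=\pi(\Xi)$ in $W$; the burden of your argument is then the normalization bookkeeping, i.e.\ matching the factor $[\Xi'':\Lambda'']^{-1}$ coming from $|\pi_0(\exp(\Xi+\lieh))|$ on the right with the covolume $\mathrm{covol}_{dX''}(\Xi'')$ on the left via the exact sequence $0\to\Lambda\cap\lieh\to\Lambda\to\Lambda''\to 0$, and you carry this out correctly. The paper instead quotients by the \emph{lattice} $\Lambda$: both sides are $\Lambda$-periodic, hence descend to distributions on the compact torus $\lieT=\liet/\Lambda$, and there one simply compares Fourier coefficients by integrating against the characters $t\mapsto t^{-\lambda}$, $\lambda\in\Lambda^*$ --- each coefficient is $1$ if $\lambda\in\Xi^*\cap\ann(\lieh)$ and $0$ otherwise on both sides. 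The paper's route buys a near-computation-free proof (normalized Haar measures make all constants automatic, and no appeal to the classical Poisson formula is needed beyond uniqueness of Fourier coefficients on a torus); your route buys an explicit reduction to the classical Poisson summation formula, which makes the statement's name transparent but forces you to track three compatible Lebesgue normalizations and the index $[\Xi'':\Lambda'']$. Both arguments are complete.
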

\begin{proof}
	Both sides are  $\Lambda$-invariant, hence  are pullbacks of distributions on $\lieT=\liet/\Lambda$.  The desired identity is equivalent to the equality of distributions on $\lieT$, 
	\[ \sum_{\nu \in \Xi^\ast \cap \ann(\lieh)} t^\nu \, dt=\delta_{\exp(\Xi+\lieh)}\]
	where $d t$ is the normalized Haar measure on $\lieT$ and $\delta_{\exp(\Xi+\lieh)}$ is the push-forward of normalized Haar measure on $\exp(\Xi+\lieh)$. To compare the Fourier coefficients of these two distributions, integrate against the function $t\mapsto t^{-\lambda}$ for $\lambda\in\Lambda^\ast$. If $\lambda\in \Xi^\ast\cap \ann(\lieh)$, the 
	Fourier coefficient on the left hand side is $1$, and likewise for the right hand side because $t^{-\lambda}$ restricts to the constant function $1$ on $\exp(\Xi+\h)$. If $\lambda\not\in \Xi^\ast\cap \ann(\lieh)$, both sides integrate to zero against $t^{-\lambda}$. 
\end{proof}

\end{appendix}

\bibliographystyle{amsplain}
\def\cprime{$'$} \def\polhk#1{\setbox0=\hbox{#1}{\ooalign{\hidewidth
  \lower1.5ex\hbox{`}\hidewidth\crcr\unhbox0}}} \def\cprime{$'$}
  \def\cprime{$'$} \def\cprime{$'$} \def\cprime{$'$} \def\cprime{$'$}
  \def\polhk#1{\setbox0=\hbox{#1}{\ooalign{\hidewidth
  \lower1.5ex\hbox{`}\hidewidth\crcr\unhbox0}}} \def\cprime{$'$}
  \def\cprime{$'$} \def\cprime{$'$} \def\cprime{$'$} \def\cprime{$'$}
\providecommand{\bysame}{\leavevmode\hbox to3em{\hrulefill}\thinspace}
\providecommand{\MR}{\relax\ifhmode\unskip\space\fi MR }
\providecommand{\MRhref}[2]{%
  \href{http://www.ams.org/mathscinet-getitem?mr=#1}{#2}
}
\providecommand{\href}[2]{#2}


\begin{thebibliography}{10}

\bibitem{al:fi}
A.~Alekseev, E.~Meinrenken, and C.~Woodward, \emph{The {V}erlinde formulas as
  fixed point formulas}, J.~Symplectic Geom. \textbf{1} (2001), no.~1, 1--46.

\bibitem{al:du}
A.~Alekseev, E.~Meinrenken, and C.~Woodward, \emph{{D}uistermaat-{H}eckman
  measures and moduli spaces of flat bundles over surfaces}, Geom.~and
  Funct.~Anal. \textbf{12} (2002), 1--31.

\bibitem{bal:vol}
M.~Baldoni, M.~Beck, C.~Cochet, and M.~Vergne, \emph{Volume computation for
  polytopes and partition functions for classical root systems}, Discrete
  Comput. Geom. \textbf{35} (2006), no.~4, 551--595.

\bibitem{bal:mul}
V.~Baldoni, A.~Boysal, and M.~Vergne, \emph{Multiple {B}ernoulli series and
  volumes of moduli spaces of flat bundles over surfaces}, J. Symbolic Comput.
  \textbf{68} (2015), no.~part 2, 27--60.

\bibitem{boy:mul}
A.~Boysal and M.~Vergne, \emph{Multiple {B}ernoulli series, an
  {E}uler-{M}aclaurin formula, and wall crossings}, Ann. Inst. Fourier
  (Grenoble) \textbf{62} (2012), no.~2, 821--858.

\bibitem{br:ar1}
M.~Brion and M.~Vergne, \emph{Arrangement of hyperplanes.~{I}.~{R}ational
  functions and {J}effrey-{K}irwan residue}, Ann.~Sci.~\'Ecole Norm.~Sup.~(4)
  \textbf{32} (1999), no.~5, 715--741.

\bibitem{dah:num}
W.~Dahmen and C.~Micchelli, \emph{The number of solutions to linear
  {D}iophantine equations and multivariate splines}, Trans. Amer. Math. Soc.
  \textbf{308} (1988), no.~2, 509--532.

\bibitem{gu:on}
V.~Guillemin, E.~Lerman, and S.~Sternberg, \emph{On the {K}ostant multiplicity
  formula}, J.~Geom.~Phys. \textbf{5} (1988), no.~4, 721--750.

\bibitem{gu:he}
V.~Guillemin and E.~Prato, \emph{Heckman, {K}ostant, and {S}teinberg formulas
  for symplectic manifolds}, Adv. in Math. \textbf{82} (1990), no.~2, 160--179.

\bibitem{loi:nor}
Y.~Loizides, \emph{Norm-square localization for {H}amiltonian {$LG$}-spaces}, J.~
  Geom.~ and Phys.~ \textbf{114} (2017), 420--449.

\bibitem{me:twi}
E.~Meinrenken, \emph{Twisted {$K$}-homology and group-valued moment maps},
  International Mathematics Research Notices \textbf{2012 (20)} (2012),
  4563--4618.

\bibitem{par:loc}
P.-E. Paradan, \emph{Localization of the {R}iemann-{R}och character}, J. Funct.
  Anal. \textbf{187} (2001), no.~2, 442--509.

\bibitem{par:wa}
\bysame, \emph{Wall-crossing formulas in {H}amiltonian geometry}, Geometric
  aspects of analysis and mechanics, Progr. Math., vol. 292,
  Birkh\"auser/Springer, New York, 2011, pp.~295--343.

\bibitem{sz:ver}
A.~Szenes, \emph{Verification of {V}erlinde's formulas for {${\rm SU}(2)$}},
  Internat. Math. Res. Notices (1991), no.~7, 93--98.

\bibitem{sz:co}
\bysame, \emph{The combinatorics of the {V}erlinde formulas}, Vector bundles in
  algebraic geometry (Durham, 1993), London Math. Soc. Lecture Note Ser., no.
  208, Cambridge Univ. Press, 1995, pp.~241--253.

\bibitem{sz:it}
\bysame, \emph{Iterated residues and multiple {B}ernoulli polynomials},
  Internat. Math. Res. Notices (1998), no.~18, 937--956.

\bibitem{sz:res}
\bysame, \emph{Residue theorem for rational trigonometric sums and {V}erlinde's
  formula}, Duke Math. J. \textbf{118} (2003), no.~2, 189--227.

\bibitem{sz:qr0}
A.~Szenes and M.~Vergne, \emph{{{$[Q,R]=0$} and Kostant partition functions}},
  Preprint, 2010. arXiv:1006.4149.

\bibitem{sz:res1}
\bysame, \emph{Residue formulae for vector partitions and {E}uler-{M}ac{L}aurin
  sums}, Adv. in Appl. Math. \textbf{30} (2003), no.~1-2, 295--342, Formal
  power series and algebraic combinatorics (Scottsdale, AZ, 2001).

\bibitem{th:co}
M.~Thaddeus, \emph{Conformal field theory and the cohomology of the moduli
  space of stable bundles}, J. Differential Geom. \textbf{35} (1992), no.~1,
  131--149.

\bibitem{ver:sl}
M.~Vergne, \emph{{Multiple Bernoulli series and wall crossing}}, Slides for AMS
  meeting in San Francisco, 2010.

\bibitem{ver:pos}
\bysame, \emph{Poisson summation formula and box splines}, Preprint (2013).
  arXiv:1302.6599.

\bibitem{ve:res}
\bysame, \emph{Residue formulae for {V}erlinde sums, and for number of integral
  points in convex rational polytopes}, European women in mathematics (Malta,
  2001), World Sci. Publ., River Edge, NJ, 2003, pp.~225--285.

\bibitem{ver:for}
\bysame, \emph{Formal equivariant {$\widehat A$} class, splines and
  multiplicities of the index of transversally elliptic operators}, Izv. Ross.
  Akad. Nauk Ser. Mat. \textbf{80} (2016), no.~5, 157--192.

\bibitem{ve:fr}
E.~Verlinde, \emph{Fusion rules and modular transformations in $2$d conformal
  field theory}, Nuclear Phys.~B \textbf{300} (1988), 360--376.

\bibitem{wi:qg}
E.~Witten, \emph{On quantum gauge theories in two dimensions}, Comm. Math.
  Phys. \textbf{141} (1991), 153--209.

\end{thebibliography}

\end{document}